\newtheorem{theorem}{Theorem}
\newtheorem{corollary}[theorem]{Corollary}
\newtheorem{lemma}[theorem]{Lemma}
\newtheorem{example}[theorem]{Example}
\newtheorem{proposition}[theorem]{Proposition}
\theoremstyle{definition}
\newtheorem{definition}[theorem]{Definition}
\newtheorem{remark}[theorem]{Remark}
\def\0{{\bf 0}}
\def\1{{\bf 1}}
\newcommand{\R}{{\mathbb {R}}}
\newcommand{\F}{{\mathbb {F}}}
\newcommand{\Fq}{\F_q}
\colorlet{ss}{black!20}
\colorlet{sslabel}{black!50}
\colorlet{backline}{black!10}
\newcommand{\srk}{\mathrm{srk}}
\newcommand{\rk}{\operatorname{rk}}
\newcommand{\rref}[2]{\hyperref[#2]{{#1}~\ref*{#2}}}
\title{The Eigenvalue Method in Coding Theory}
\author{Aida Abiad} \thanks{\texttt{a.abiad.monge@tue.nl}, Department of Mathematics and Computer Science, Eindhoven University of Technology, The Netherlands
\thanks{Department of Mathematics and Data Science of Vrije Universiteit Brussel, Belgium}
\author{Loes Peters} \thanks{\texttt{l.peters@tue.nl}, Department of Mathematics and Computer Science, Eindhoven University of Technology, The Netherlands} 
\author{Alberto Ravagnani} \thanks{\texttt{a.ravagnani@tue.nl},  Department of Mathematics and Computer Science, Eindhoven University of Technology, The Netherlands}}
\date{}
\begin{document}

\begin{abstract}
We lay down the foundations of the Eigenvalue Method in coding theory. The method uses modern algebraic graph theory to derive upper bounds on the size of error-correcting codes for various metrics, addressing major open questions in the field. We identify the core assumptions that allow applying the Eigenvalue Method, test it for multiple well-known classes of error-correcting codes, and compare the results with the best bounds currently available. By applying the Eigenvalue Method, we obtain new bounds on the size of error-correcting codes that often improve the state of the art. Our results show that spectral graph theory techniques capture structural properties of error-correcting codes that are missed by classical coding theory approaches.\\

\medskip

\noindent \textbf{Keywords:} spectral graph theory, coding theory, error-correcting code, eigenvalue bounds, linear optimization.\\
\noindent \textbf{MSC:} 11T71, 05C50
\end{abstract}

\maketitle





\section{Introduction}


This paper is about the interplay between spectral graph theory and algebraic coding theory. Spectral graph theory focuses on describing the combinatorial properties of a graph via the eigenvalues (spectrum) of its adjacency matrix, while coding theory is the science of adding redundancy to data in such a way it becomes resistant to noise. Redundancy is added using mathematical objects called error-correcting codes, whose theory dates back to Shannon's celebrated paper ``{A mathematical theory of communication}''~\cite{shannon1948mathematical}.

There exist several classes of error-correcting codes, each of which is best suited to correct the error patterns introduced by a specific type of noisy channel. 
However, most classes of error-correcting codes can be described with the same high-level framework. The starting point is a finite ``ambient'' set $A$ endowed with a distance function $d: A \times A \to \R$, which reflects the underlying channel. The pair $(A,d)$ is called a \textit{discrete metric space}. An error-correcting code is a subset $\mathcal{C} \subseteq A$, where the distance between distinct elements is bounded from below by a given number $d^*$, measuring the correction capability of $\mathcal{C}$. There is a trade-off between having large $d^*$ and having a large cardinality: The main task in this context is to find the largest possible~$\mathcal{C}$ for a given value $d^*$. Depending on the combinatorial structure of $A$, this problem can be relatively easy~\cite{Delsarte1978BilinearTheory}, or inspire conjectures that are almost 70 years old~\cite{segre1955curve,chowdhury2016inclusion,Voloch1991, HirschfeldKorchmaros1996, BallDeBeule2012a, Ball2010}.
This paper concentrates on establishing the foundations of the \textit{Eigenvalue Method} for solving this central task. Recently, this method has been successfully applied to three distinct metrics, see ~\cite{Abiad2024EigenvalueCodes,Abiad2023EigenvalueCodes, Abiad2024EigenvalueCodesb}.

There is a natural connection between coding theory and graph theory. Let the elements of $A$ be the vertices of a graph $G$. Connect two vertices $x,y$ if their distance $d(x,y)$ is at most $d^*-1$. Then the largest cardinality of an error-correcting code with the desired correction capability is precisely the independence number of $G$. This observation has been used in various instances to obtain bounds on the size of error-correcting codes, or to revisit bounds established using different techniques; see for instance~\cite{el2007bounds, jiang2004asymptotic, krivelevich2004lower}.

Algebraic graph theory is the foundation of one of the best known methods to estimate the size of an error-correcting code, namely Delsarte's Linear Programming Bound~\cite{Delsarte1973AnTheory}. Delsarte's method makes use of an association scheme describing the properties of the space $(A,d)$ to construct a linear program, whose maximum value is an upper bound for the size of a code. Delsarte's method is widely used and applies to several classes of codes, even though it's a quite technical result that requires specific computations for each scheme at hand;  see~\cite{Delsarte1978BilinearTheory,Delsarte1975AlternatingGFq,Dukes2020OnCodes, Astola1982TheLee-codes,Sole1986TheScheme,Roy2009BoundsSubspaces,Abiad2024ACodes} among many others. Furthermore,
not all spaces $(A,d)$ come with a natural structure of an association scheme. For instance the sum-rank-metric space does not come with this natural association scheme, but an alternative scheme was recently derived~\cite{Abiad2024ACodes}.
In sharp contrast with Delsarte's approach, the method proposed in this paper does not rely on association schemes and it only requires computing the spectrum of a graph. Even when Delsarte's method can be used, the approach proposed in this paper is easier to apply and provides competitive bounds. Furthermore, for small minimum distances, the Eigenvalue Method provides closed formulas and therefore the optimal polynomials, while for Delsarte's approach this is not known for most metrics. Such closed formulas for the bounds from the Eigenvalue Method can then be used to show non-existence and characterization results for several metrics, as it was done for instance for the sum-rank metric \cite{Abiad2023EigenvalueCodes} and for the Lee metric \cite{Abiad2024EigenvalueCodesb}.

The Eigenvalue Method, which is the centerpiece of this paper, stems from the observation that, for several ambient spaces $A$ relevant for coding theory, the graph $G$ defined above is the $(d^*-1)$-th power graph of a simpler graph $G'$. When this happens, the independence number of $G$ is the $(d^*-1)$-independence number of $G'$. The graph $G'$ is defined as follows: Instead of connecting $x$ and $y$ if $d(x,y) \le d^*-1$, we connect them if $d(x,y)=1$.

Interestingly, several ambient spaces
$A$ that arise in coding theory naturally have the regularity properties that are needed to write $G$ as the power graph of a graph $G'$. In turn, this simple observation is surprisingly powerful, as it allows for the use of recent spectral techniques developed by the first author and collaborators \cite{Abiad2019OnGraphs} to study the higher independence numbers of $G'$ from its eigenvalues. Note that the spectrum of $G$ is not generally related with the spectrum of $G'$~\cite{Abiad2022OptimizationPowers,Das2013LaplacianGraph}, making the approach of considering $G'$ substantially different from (and more feasible than) the one of considering $G$ directly.
In this paper, we focus on two spectral graph theory techniques, which yield
the Inertia-type and 
Ratio-type bounds; see~\cite{Abiad2019OnGraphs} for more details.

The approach we just described has been recently applied to some classes of error-correcting codes, most notably to sum-rank-metric codes~\cite{Abiad2023EigenvalueCodes}, Lee codes \cite{ Abiad2024EigenvalueCodesb} and alternating-rank-metric codes \cite{Abiad2024EigenvalueCodes}, obtaining bounds that often outperform those derived with more traditional arguments, see e.g.~\cite{Huffman2003FundamentalsCodes,byrne2021fundamental}. Eigenvalue bounds like the ones proposed in this paper can also be used to prove that codes meeting a certain bound with equality cannot exist, see e.g.~\cite{Abiad2023EigenvalueCodes,Fiol2020ANumber}. This strongly suggests that spectral graph theory methods can uncover structural properties of ambient spaces that are relevant to coding theory, but that are \textit{not} captured by classical techniques. An example is the sum-rank-metric space~\cite{Abiad2023EigenvalueCodes}, which is a hybrid between rank-metric and Hamming-metric spaces, and for which classical coding theory arguments can lead to quite coarse bounds~\cite{byrne2021fundamental}.

Motivated by these encouraging results, in this paper we investigate the fundamental assumptions underlying the applicability of the Eigenvalue Method in coding theory, and investigate its generality. More precisely, we identify the key compatibility assumptions between the ambient space $A$ and the corresponding graph that allow the application of the spectral graph theory machinery. We then apply these techniques to several ambient spaces and metrics that naturally arise in coding theory, highlighting the cases where the new approach improves on the state of the art. The Eigenvalue Method can be seen as a variation of Delsarte's linear programming (LP) method, but it does not require any regularity on the graph associated to the metric, making it possible to be easily used in cases when Delsarte's method does not apply. While for distance-regular graphs one can use the celebrated linear programming bound by Delsarte on $G^k$, some of the newly proposed eigenvalue bounds are much more general. Indeed, they can also be applied to vertex-transitive graphs which are not distance-regular or, in general, to walk-regular graphs which are not distance-regular. In order to illustrate the applicability and the power of the proposed Eigenvalue Method, we use it to improve on several known results such as \cite[Theorem 13.49]{Berlekamp1968TheCodes} and \cite[Theorem 3.1]{Garcia-Claro2022OnCodes} (city block metric), and \cite{SR2025} (phase-rotation metric), besides the known improvements that the Eigenvalue Method gave on the for the sum-rank metric \cite{Abiad2023EigenvalueCodes} and  the Lee metric \cite{Abiad2024EigenvalueCodesb}. Moreover, by applying the new method we also obtain multiple sharp bounds that give an alternative approach to known results such as \cite{Bossert1996Singleton-TypeCodes} (block metric and cyclic $b$-burst metric), \cite{Borden1983ACodes} and \cite{Varshamov1964EstimateRussian} (Varshamov metric), on top of the known equivalent bounds that the Eigenvalue Method gave for the alternating forms metric \cite{Abiad2024EigenvalueCodes}.

The remainder of this paper is organized as follows. In Section \ref{sec:preliminaries} the necessary preliminaries on coding theory and on graph theory are treated. A description of the Eigenvalue Method is given in Section \ref{sec:evmethod}. This section also contains conditions on the applicability of the method. In Section \ref{sec: spectral bounds} the spectral bounds that are used in the Eigenvalue Method are stated. In order to illustrate the applicability range and power of this newly proposed method, the Eigenvalue Method is applied to several discrete metric spaces. Two of such new applications are discussed in Section \ref{sec:results}. In particular, the method is applied to the city block metric in Section \ref{sec: city block} and to the phase-rotation metric in Section \ref{sec: phase rotation}. A few more applications of the Eigenvalue Method are given in Section \ref{sec: equality metrics}.

\section{Preliminaries}\label{sec:preliminaries}

 In this section, we establish the notation for the rest of the paper and briefly survey the needed background.  
By ``natural numbers'' 
we mean the positive integers, i.e. $\mathbb{N} = \{1,2,3, \ldots \}$. The set of natural numbers with zero is denoted by $\mathbb{N}_0$. For $m \in \mathbb{N}$, let $[m]$ denote the set of integers from $1$ to $m$ and let $[\![m]\!]$ denote the set of integers from $0$ to $m$; $[m] := \{1, \ldots, m\}$ and $[\![m]\!] := \{0,1, \ldots, m\}$. We denote the standard basis vectors of any $n$-dimensional vector space as $\textbf{e}_1, \ldots, \textbf{e}_n$. The all-zeros vector and the all-ones vector in such a vector space are denoted as $\textbf{0}$ and $\textbf{1}$, respectively. 

In this paper we take $m,n \in \mathbb{N}$ and $q$ a prime power, i.e. $q=p^k$ for some prime $p$ and $k \in \mathbb{N}$. The set of integers modulo $m$ is denoted as $\mathbb{Z}/m\mathbb{Z}$. The finite field of $q$ elements is denoted $\Fq$. Moreover, $\Fq^*$ denotes the multiplicative group of nonzero elements of $\Fq$. 

The indicator function of an event $S$ is denoted as $\mathbbm{1}_S$, or as $\mathbbm{1}\{x \in S\} = \mathbbm{1}_S(x)$.

\subsection{Coding theory}
We briefly recall some definitions from coding theory.
A \textit{discrete metric space} is a pair $(\mathcal{X},d)$ where $\mathcal{X}$ is a finite set and $d:\mathcal{X} \times \mathcal{X} \to \R_{\geq 0}$ is a function such that
\begin{itemize}
\item for all $\textbf{x},\textbf{y} \in \mathcal{X}$ we have $d(\textbf{x},\textbf{y})=0 \Leftrightarrow \textbf{x} = \textbf{y}$;
\item for all $\textbf{x},\textbf{y} \in \mathcal{X}$ we have $d(\textbf{x},\textbf{y})=d(\textbf{y},\textbf{x})$;
\item for all $\textbf{x},\textbf{y},\textbf{z} \in \mathcal{X}$ we have $d(\textbf{x},\textbf{z}) \leq d(\textbf{x},\textbf{y}) + d(\textbf{y},\textbf{z})$, which is the triangle inequality.
\end{itemize}
The classic example of a discrete metric space in coding theory is the set $\Fq^n$ with the \textit{Hamming metric} $d_\text{H}$, which is defined as $d_\text{H}(\textbf{x},\textbf{y}) := |\{1 \leq i \leq n: x_i \neq y_i\}|$ for $\textbf{x}=(x_1,\ldots, x_n), \textbf{y} =(y_1, \ldots, y_n) \in \Fq^n$.

A \textit{code} is a subset $\mathcal{C} \subseteq \mathcal{X}$ with $|\mathcal{C}| \geq 2$. The elements of a code $\mathcal{C}$ are called \textit{code words}. The \textit{minimum distance} of a code $\mathcal{C} \subseteq \mathcal{X}$ is defined as
\[
d(\mathcal{C}) := \min \{d(\textbf{x},\textbf{y}) \mid \textbf{x},\textbf{y} \in \mathcal{C}, \, \textbf{x} \neq \textbf{y}\}.
\]
The main problem of classical coding theory is understanding how large a code of certain minimum distance can be. In this regard the largest cardinality of a code $\mathcal{C} \subseteq \Fq^n$ of minimum distance $d$ is denoted as $A_q(n,d)$. For the Hamming metric several upper bounds exist for this quantity $A_q^\text{H}(n,d)$, e.g. the Singleton bound \cite[Theorem 2.4.1]{Huffman2003FundamentalsCodes}, the Hamming bound (or sphere-packing bound) \cite[Theorem 1.12.1]{Huffman2003FundamentalsCodes}, and the Plotkin bound \cite[Theorem 2.2.1]{Huffman2003FundamentalsCodes}. On the other hand, code constructions can give lower bounds for $A_q^\text{H}(n,d)$; see \cite{Huffman2003FundamentalsCodes, MacWilliams1977TheoryCodes} among others.  

While the problem of computing the maximum cardinality of a code with given minimum distance has been extensively studied for the Hamming metric, the question is less understood for other metrics. The Lee metric, the rank metric, and the sum-rank metric, among others, are examples of metrics that are also often used in coding theory. Sum-rank-metric codes, for instance, have been used for multi-shot network coding~\cite{martinez2019reliable} and space-time coding~\cite{shehadeh2021space}. %
More discrete metric spaces follow in the remainder of this paper.

\subsection{Graph theory}
Next we recall some notions of graph theory, with a special focus on the graph properties that are used in the rest of this paper. 

\begin{definition}
A graph is \textit{$\delta$-regular} if every vertex in the graph has degree $\delta$. A graph is said to be \textit{regular} if the graph is $\delta$-regular for some $\delta \in \mathbb{N}_0$.
\end{definition}
A \textit{graph automorphism} of a graph $G=(V,E)$ is a permutation $\sigma$ of the vertex set $V$ such that $(x,y) \in E$ if and only if $(\sigma(x), \sigma(y)) \in E$. A graph is \textit{vertex-transitive} if for any two vertices $x,y$ there exists a graph automorphism $\sigma$ such that $\sigma(x)=y$. Note that vertex-transitive graphs are regular.

A graph is a \textit{Cayley graph} over a group $G$ with connecting set $S$ if the vertices of the graph are the elements of $G$, and two vertices $x,y$ are adjacent if and only if there is an element $s \in S$ such that $x+s=y$. In this work we assume that the connecting set $S$ does not contain the identity element of $G$ and that $S$ is closed under inverses. This assumption implies that the corresponding Cayley graph is undirected and has no self-loops. Note that Cayley graphs are vertex-transitive.

\begin{definition}
A graph is \textit{$k$-partially walk-regular} if for any vertex $x$ and any positive integer $i \leq k$ the number of closed walks of length $i$ that start and end in $x$ does not depend on the choice of $x$. A graph is \textit{walk-regular} if it is $k$-partially walk-regular for any positive integer~$k$.
\end{definition}
Note that vertex-transitive graphs are necessarily walk-regular.

For any two vertices $x$ and $y$ at distance $i$ from each other, let $p_{j,h}^i(x,y)$ denote the number of vertices at distance $j$ from $x$ and at distance $h$ from $y$. 
\begin{definition} 
A graph is \textit{$k$-partially distance-regular} if for any integers $i,j,h$ such that $j,h \leq k$ and $i \leq j+h \leq k$ the values $p_{j,h}^i(x,y)$ do not depend on the choice of $x$ and $y$. A graph is \textit{distance-regular} if it is $k$-partially distance-regular for any integer $k$.
\end{definition}
In particular, a graph is $k$-partially distance-regular if for any integer $i \leq k$ the values $c_i(x,y):=p_{1,i-1}^i(x,y)$, $a_{i-1}(x,y):=p_{1,i-1}^{i-1}(x,y)$, and $b_{i-2} := p_{1,i-1}^{i-2}(x,y)$ do not depend on the choice of $x$ and $y$. For distance-regular graphs these values are captured in the \textit{intersection array} $(b_0, b_1, \ldots, b_{D-1}; c_1, \ldots, c_D)$, where $D$ is the diameter of the graph. Since distance-regular graphs are $\delta$-regular for some $\delta \in \mathbb{N}_0$, the following relations hold: $a_i+b_i+c_i=\delta$ for $0 \leq i \leq D$, $b_0=\delta$, $a_0=c_0=0$. Note that $k$-partially distance-regular graphs are also $k$-partially walk-regular. 


Recall the \textit{Cartesian product} of two graphs $G$ and $H$, denoted as $G \Box H$, which is the graph with vertex set equal to the Cartesian product of the vertex sets of $G$ and $H$, where two vertices $(g_1,h_1)$ and $(g_2,h_2)$ are adjacent if $g_1 \sim g_2$ and $h_1=h_2$, or $g_1=g_2$ and $h_1 \sim h_2$. Here $\sim$ denotes adjacency of the vertices in the graph. The Cartesian product of two graphs can be inductively extended to a Cartesian product of finitely many graphs. It is well-known that if $G$ and $H$ are graphs with respective eigenvalues $\lambda_i, i \in I$ and $\mu_j, j \in J$, then the eigenvalues of $G \Box H$ are $\lambda_i + \mu_j$ for $i \in I, j \in J$ (see for instance \cite{Cvetkovic1980SpectraApplications}). This can be inductively extended to the Cartesian product of finitely many graphs.

\begin{definition}
The \textit{$k$-independence number} of a graph $G$, denoted as $\alpha_k(G)$, is the size of the largest set of vertices in $G$ such that any two vertices in the set are at geodesic distance greater than $k$ from each other. 
\end{definition}
Alternatively we can consider the \textit{$k$-th power graph} $G^k$ of a graph $G=(V,E)$, which is the graph with vertex set $V$ where two vertices $x,y \in V$ are adjacent if $d_G(x,y) \leq k$. Here $d_G(x,y)$ denotes the geodesic distance between vertices $x$ and $y$ in the graph~$G$. The $k$-independence number of $G$ equals the ($1$-)independence number of $G^k$, which is the size of the largest independent set in $G^k$. Despite this, even the simplest algebraic or combinatorial parameters of the power graph $G^k$ cannot be easily deduced from the corresponding parameters of the graph $G$, e.g. neither the spectrum~\cite{Das2013LaplacianGraph},~\cite{Abiad2022OptimizationPowers}, nor the average degree~\cite{Devos2013AveragePowers}, nor the rainbow connection number~\cite{Basavaraju2014RainbowProducts} of $G^k$ can be derived in general directly from those of the original graph $G$. 
In this regard, several eigenvalue bounds on $\alpha_k(G)$ that only depend on the spectrum of $G$ have been proposed in the literature. Another upper bound on the independence number, and after extension the $k$-independence number, of a graph is the \textit{Lov\'asz theta number} \cite{Lovasz1979OnGraph}, although this bound requires the graph adjacency matrix as input. The Lov\'asz theta number can be used as an upper bound on the $k$-independence number of a graph $G$ by computing it for the $k$-th power graph $G^k$.

The $k$-independence number of a graph and these eigenvalue bounds are the bases on which the Eigenvalue Method is built, as we see in the next section.

\section{The Eigenvalue Method}\label{sec:evmethod}

In this section we give a description of the Eigenvalue Method and we give conditions on the applicability of the method. In later sections applications of the Eigenvalue Method are discussed.

As introduced earlier, there is a natural connection between coding theory and graph theory, which enables the use of bounds on the $k$-independence number for the construction of bounds on the cardinality of codes with given correction capability. The method can be formalized as follows. Let $(\mathcal{X},d)$ be a discrete metric space. Define the distance graph $G_d(\mathcal{X})$ for $(\mathcal{X}, d)$ as the graph with vertex set $\mathcal{X}$ where vertices $x,y \in \mathcal{X}$ are adjacent if $d(x,y)=1$. If the geodesic distance between vertices in $G_d(\mathcal{X})$ equals the distance between corresponding elements in the discrete metric space, then there is an equivalence between the maximum cardinality of codes in $(\mathcal{X}, d)$ and the $k$-independence number of $G_d(\mathcal{X})$. The next result formalizes this equivalence.

\begin{lemma}[\text{\cite[Corollary 16]{Abiad2023EigenvalueCodes}}] \label{lemma: geo dist then max card alphak}
Let $(\mathcal{X},d)$ be a discrete metric space. Suppose the geodesic distance in $G_d(\mathcal{X})$ equals the distance in the discrete metric space $(\mathcal{X},d)$, i.e. $d_{G_d(\mathcal{X})}(x,y) = d(x,y)$ for all $x,y \in \mathcal{X}$. Then the maximum cardinality of a code $\mathcal{C} \subseteq \mathcal{X}$ of minimum distance~$d'$ equals the $k$-independence number of $G_d(\mathcal{X})$ for $k=d'-1$, namely $\alpha_{d'-1}(G_d(\mathcal{X}))$. 
\end{lemma}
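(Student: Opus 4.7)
\medskip

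\noindent\textbf{Proof plan.} The statement is essentially a bijective identification between codes of minimum distance at least $d'$ and $(d'-1)$-independent sets in $G_d(\mathcal{X})$, so the plan is to show each inclusion of cardinalities and conclude. I would first unpack the definition of $\alpha_k$ from Section~2: $\alpha_k(G)$ is the maximum size of a set $S \subseteq V(G)$ such that every pair of distinct vertices in $S$ has geodesic distance strictly greater than $k$ in $G$.

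The forward direction is as follows. Let $\mathcal{C} \subseteq \mathcal{X}$ be a code of minimum distance $d'$, so for any distinct $\textbf{x},\textbf{y} \in \mathcal{C}$ we have $d(\textbf{x},\textbf{y}) \geq d'$. By the standing hypothesis $d_{G_d(\mathcal{X})}(\textbf{x},\textbf{y}) = d(\textbf{x},\textbf{y}) \geq d' > d'-1$. Therefore $\mathcal{C}$, viewed as a subset of vertices of $G_d(\mathcal{X})$, is a $(d'-1)$-independent set, and hence $|\mathcal{C}| \leq \alpha_{d'-1}(G_d(\mathcal{X}))$.

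For the reverse direction, let $S$ be a $(d'-1)$-independent set in $G_d(\mathcal{X})$ of maximum size. For distinct $\textbf{x},\textbf{y} \in S$ the geodesic distance satisfies $d_{G_d(\mathcal{X})}(\textbf{x},\textbf{y}) \geq d'$, and invoking the hypothesis again yields $d(\textbf{x},\textbf{y}) \geq d'$. Hence $S$ is a code in $(\mathcal{X},d)$ whose minimum distance is at least $d'$, so its cardinality is bounded above by the maximum cardinality of a code of minimum distance $d'$. Combining the two inequalities gives the equality.

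The only minor subtlety to address is the convention that a code has $|\mathcal{C}| \geq 2$; this is harmless because any ambient space in which the statement is of interest admits codes of size at least two (for instance, any two elements of $\mathcal{X}$ at distance at least $d'$ form such a code), and $\alpha_{d'-1}(G_d(\mathcal{X})) \geq 2$ in exactly these cases. I do not anticipate any real obstacle in the argument: the content lies entirely in the assumption that the geodesic distance of the distance graph coincides with the metric $d$, which converts the metric condition into a graph-theoretic condition and vice versa.
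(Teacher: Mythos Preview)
Your argument is correct: under the hypothesis $d_{G_d(\mathcal{X})}=d$, a subset of $\mathcal{X}$ has all pairwise $d$-distances at least $d'$ if and only if it has all pairwise geodesic distances at least $d'$, i.e.\ is a $(d'-1)$-independent set; the two inclusions you give are exactly this equivalence. Note that the paper does not actually supply a proof of this lemma---it is quoted as \cite[Corollary 16]{Abiad2023EigenvalueCodes}---so there is no in-paper argument to compare against, but your proof is the standard one and matches what any proof of this fact must do.
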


Bounds on the $k$-independence number can now be used to obtain bounds on the cardinality of codes. Specifically, we consider two spectral bounds for the $k$-independence number, namely the Inertia-type bound and the Ratio-type bound, which are the main tools of the Eigenvalue Method. These spectral bounds can be found in Section \ref{sec: spectral bounds}, together with their respective linear programming implementations. The graph $G_d(\mathcal{X})$ should have certain graph properties for these spectral bounds to be applicable to the graph.

For the Inertia-type bound from Theorem \ref{th:inertia type bound} and corresponding mixed-integer linear program (MILP) \eqref{eq: milp inertia} there are no extra graph properties that $G_d(\mathcal{X})$ needs to have. This MILP requires as input the adjacency matrix of the graph besides the graph adjacency spectrum. A faster MILP for the Inertia-type bound which only requires the graph adjacency spectrum as input is MILP \eqref{eq: milp inertia walk reg}; this MILP only works for $k$-partially walk-regular graphs, so it is desirable for $G_d(\mathcal{X})$ to have this property. The Ratio-type bound from Theorem \ref{th:ratio type bound} only applies to regular graphs, so regularity of $G_d(\mathcal{X})$ is preferred. For the linear program (LP) \eqref{eq: lp ratio}, which corresponds to the Ratio-type bound, the input graph is required to be $k$-partially walk-regular. So $k$-partial walk-regularity of $G_d(\mathcal{X})$ is preferred here as well. Note that this LP only needs the graph adjacency spectrum as input.

We compare the Eigenvalue Method to Delsarte's linear programming method. In general it is not known if bounds obtained via Delsarte's method are stronger than bounds obtained using the Inertia-type bound or the Ratio-type bound. However, since Delsarte's LP method directly applies when $G_d(\mathcal{X})$ is distance-regular, we prefer to restrict to discrete metric spaces where the corresponding graph $G_d(\mathcal{X})$ is not distance-regular. 

The only necessary condition for the applicability of the Eigenvalue Method is the following:
\begin{enumerate}[label=(C\arabic*)]
\item The geodesic distance in $G_d(\mathcal{X})$ equals the distance in the discrete metric space $(\mathcal{X},d)$, i.e., $d_{G_d(\mathcal{X})}(x,y)=d(x,y)$ for all $x,y \in \mathcal{X}$. \label{item: C1}
\end{enumerate}


Moreover, some graph properties of $G_d(\mathcal{X})$ are highly desired. These can be summarized as follows.
\begin{enumerate}[label=(P\arabic*)]
\item The graph $G_d(\mathcal{X})$ is regular. This property is desirable as the Ratio-type bound applies if this is the case. \label{item: C2}
\item The graph $G_d(\mathcal{X})$ is $k$-partially walk-regular. This property is desirable as the faster MILP implementation of the Inertia-type bound and the LP implementation of the Ratio-type bound apply if this is the case. \label{item: C3}
\item The graph $G_d(\mathcal{X})$ is not distance-regular. This property is desirable as Delsarte's LP method is not directly applicable if this is the case. \label{item: C4}
\end{enumerate}
\subsection{Eigenvalue bounds} \label{sec: spectral bounds}
In this section we give the eigenvalue bounds that are used in the Eigenvalue Method. First the Inertia-type bound and its MILP implementation are given. Then the Ratio-type bound and its LP implementation are stated.

Define $\mathbb{R}_k[x]$ as the set of all polynomials in the variable $x$ with real coefficients and degree at most $k$. The Inertia-type bound is an upper bound on the $k$-independence number of a graph. 

\begin{theorem}[\text{Inertia-type bound, \cite[Theorem 3.1]{Abiad2019OnGraphs}}] \label{th:inertia type bound}
Let $G$ be a graph with $n$ vertices, adjacency eigenvalues $\lambda_1 \geq \cdots \geq \lambda_n$, and adjacency matrix $A$. Let $p \in \mathbb{R}_k[x]$ with corresponding parameters $W(p):= \max_{u \in V(G)} \{(p(A))_{uu}\}$, $w(p):= \min_{u \in V(G)} \{(p(A))_{uu}\}$. Then the $k$-independence number $\alpha_k$ of $G$ satisfies
\[
\alpha_k \leq \min \left\{|\{i: p(\lambda_i) \geq w(p)\}|, \, |\{i: p(\lambda_i) \leq W(p)\}| \right\}.
\]
\end{theorem}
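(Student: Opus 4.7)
The plan is to combine two standard ingredients: (i) a polynomial in $A$ of degree at most $k$ vanishes on pairs of vertices that are more than $k$ apart, so a $k$-independent set induces a diagonal principal submatrix of $p(A)$; and (ii) Cauchy's interlacing theorem applied to this submatrix inside $p(A)$, whose spectrum is $\{p(\lambda_i)\}_{i=1}^n$.

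First, I would pick a $k$-independent set $U \subseteq V(G)$ of maximum size $m := \alpha_k$. For any two distinct $u,v \in U$ the geodesic distance satisfies $d_G(u,v) > k$, so the number of walks of any length $0 \leq i \leq k$ between $u$ and $v$ is zero, i.e. $(A^i)_{uv} = 0$. Taking linear combinations, $(p(A))_{uv} = 0$ for all distinct $u,v \in U$ because $\deg p \leq k$. Consequently, the principal submatrix $B := (p(A))[U,U]$ is a diagonal $m \times m$ matrix whose diagonal entries are exactly the values $(p(A))_{uu}$ for $u \in U$. By definition of $w(p)$ and $W(p)$, every eigenvalue of $B$ lies in the interval $[w(p),W(p)]$.

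Next, I would apply Cauchy's interlacing theorem to $B$ as a principal submatrix of the symmetric matrix $p(A)$. The spectrum of $p(A)$ is the multiset $\{p(\lambda_1),\ldots,p(\lambda_n)\}$; let $\theta_1 \geq \cdots \geq \theta_n$ denote it in non-increasing order, and let $\mu_1 \geq \cdots \geq \mu_m$ denote the eigenvalues of $B$. Cauchy interlacing gives
\[
\theta_i \geq \mu_i \geq \theta_{n-m+i} \quad \text{for } i=1,\ldots,m.
\]
Since $\mu_i \geq w(p)$ for every $i$, the inequality $\theta_i \geq \mu_i \geq w(p)$ yields $m$ indices with $\theta_i \geq w(p)$, hence $|\{i : p(\lambda_i) \geq w(p)\}| \geq m$. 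Dually, since $\mu_i \leq W(p)$, the inequality $\theta_{n-m+i} \leq \mu_i \leq W(p)$ yields $m$ indices with $\theta_j \leq W(p)$, hence $|\{i : p(\lambda_i) \leq W(p)\}| \geq m$. Taking the minimum of the two counts gives the claimed inequality for $\alpha_k = m$.

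There is no real obstacle in this argument; the only care needed is (a) noting that the degree bound $\deg p \leq k$ together with $d_G(u,v) > k$ really forces \emph{every} monomial term to vanish off the diagonal of the restricted matrix, and (b) invoking Cauchy interlacing in the correct direction on each side to obtain the two inequalities simultaneously. The strength of the bound is then delegated entirely to the freedom of choosing a good polynomial $p$, which is precisely what makes the MILP formulation in \eqref{eq: milp inertia} useful.
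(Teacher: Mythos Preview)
Your argument is correct and is precisely the standard proof of this result: restrict $p(A)$ to a maximum $k$-independent set to obtain a diagonal principal submatrix with entries in $[w(p),W(p)]$, then apply Cauchy interlacing to count eigenvalues of $p(A)$ on each side. Note that the present paper does not actually prove this theorem but cites it from \cite{Abiad2019OnGraphs}; the proof there follows exactly the route you outline, so there is nothing further to compare.
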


Note that for $k=1$ the Inertia-type bound reduces to the well-known inertia bound by Cvetkovi\'c \cite{Cvetkovic1971GraphsSpectra}.


In \cite{Abiad2022OptimizationPowers} an MILP has been proposed that finds the optimal polynomial for the Inertia-type bound, which is the polynomial that minimizes the upper bound on the $k$-independence number. This MILP subsequently finds this minimized upper bound on the $k$-independence number. Let $G$ be a graph with $n$ vertices, distinct adjacency eigenvalues $\theta_0 > \cdots > \theta_r$ with respective multiplicities $m_0, \ldots, m_r$, and adjacency matrix $A$. Let $p(x) := a_kx^k + \cdots + a_0$, $\textbf{b}=(b_0, \ldots, b_r) \in \{0,1\}^{r+1}$, and $\textbf{m} = (m_0, \ldots, m_r)$. Then the following MILP with variables $a_0, \ldots, a_k$ and $b_0, \ldots, b_r$ finds the optimal polynomial for Theorem \ref{th:inertia type bound}:

\begin{center}
\fbox{\parbox{\linewidth}{
\begin{equation} \label{eq: milp inertia}
\begin{aligned}
\text{minimize } &\textbf{m}^\top \textbf{b} \\
\text{subject to } &\sum_{i=0}^k a_i (A^i)_{vv} \geq 0, &v&\in V(G) \backslash \{u\} \\
&\sum_{i=0}^k a_i (A^i)_{uu} = 0 & \\
&\sum_{i=0}^k a_i \theta_j^i - M b_j + \epsilon \leq 0, &j &= 0, \ldots, d \\
&\textbf{b} \in \{0,1\}^{r+1}
\end{aligned}
\end{equation}}}
\end{center}

Here $M$ is some fixed large number and $\epsilon>0$ is small. This MILP has to run for every $u \in V(G)$ and the best objective value is then the minimum of all separate objective values. This lowest objective value is exactly the best upper bound for the $k$-independence number that can be obtained from the Inertia-type bound.

In \cite{Abiad2022OptimizationPowers} it is discussed that if the graph $G$ is $k$-partially walk-regular, then MILP \eqref{eq: milp inertia} only has to run for one $u \in V(G)$. In this case, using the same notation as above, MILP \eqref{eq: milp inertia} simplifies to the following MILP:

\begin{center}
\fbox{\parbox{\linewidth}{
\begin{equation} \label{eq: milp inertia walk reg}
\begin{aligned}
\text{minimize } & \textbf{m}^\top \textbf{b} & \\
\text{subject to } & \sum_{i=0}^r m_i p(\theta_i) = 0 & \\
& \sum_{i=0}^k a_i \theta_j^i - M b_j + \epsilon \leq 0, &j &= 0, \ldots, d \\
& \textbf{b} \in \{0,1\}^{r+1}
\end{aligned}
\end{equation}}}
\end{center}

For $k$-partially walk-regular graphs $G$, the objective value of MILP \eqref{eq: milp inertia walk reg} is exactly the best upper bound for the $k$-independence number that can be obtained from the Inertia-type bound.

The Ratio-type bound is another upper bound on the $k$-independence number, but specifically for regular graphs.

\begin{theorem}[\text{Ratio-type bound, \cite[Theorem 3.2]{Abiad2019OnGraphs}}] \label{th:ratio type bound}
Let $G$ be a regular graph with $n$ vertices, adjacency eigenvalues $\lambda_1 \geq \cdots \geq \lambda_n$, and adjacency matrix $A$. Let $p \in \mathbb{R}_k[x]$ with corresponding parameters $W(p):= \max_{u \in V(G)} \{(p(A))_{uu}\}$, $\lambda(p):= \min_{i \in [2,n]} \{p(\lambda_i)\}$. Assume that $p(\lambda_1) > \lambda(p)$. Then the $k$-independence number $\alpha_k$ of $G$ satisfies
\[
\alpha_k \leq n \frac{W(p)-\lambda(p)}{p(\lambda_1) - \lambda(p)}.
\]
\end{theorem}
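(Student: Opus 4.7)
The plan is to adapt the classical Hoffman ratio bound argument, but applied to the matrix $B = p(A)$ rather than to $A$ itself; the polynomial $p$ is chosen to be compatible with the combinatorial structure of a $k$-independent set. Let $U \subseteq V(G)$ be a $k$-independent set with $|U| = \alpha_k$, and let $\chi \in \{0,1\}^n$ denote its characteristic vector. The key initial observation is that for distinct $u,v \in U$ the geodesic distance $d_G(u,v)$ exceeds $k$, so $(A^i)_{uv} = 0$ for every $0 \le i \le k$, and consequently $B_{uv} = (p(A))_{uv} = 0$. Therefore the off-diagonal contributions in $\chi^{\top} B \chi$ vanish, giving the upper estimate
\[
\chi^{\top} B \chi \;=\; \sum_{u \in U} (p(A))_{uu} \;\le\; |U|\, W(p).
\]

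The plan for the matching lower bound is a spectral decomposition argument that uses the regularity of $G$. Since $G$ is $\delta$-regular with $\delta = \lambda_1$, the all-ones vector $\1$ is an eigenvector of $A$ for $\lambda_1$, and hence an eigenvector of $B$ for $p(\lambda_1)$. Fix an orthonormal eigenbasis $v_1, \dots, v_n$ of $A$ with $v_1 = \1/\sqrt{n}$ and $Av_i = \lambda_i v_i$. Writing $\chi = \sum_{i=1}^n c_i v_i$ with $c_1 = \chi^{\top}\1/\sqrt{n} = |U|/\sqrt{n}$ and $\sum_i c_i^2 = \|\chi\|^2 = |U|$, one obtains
\[
\chi^{\top} B \chi \;=\; \sum_{i=1}^n c_i^2\, p(\lambda_i) \;\ge\; c_1^2\, p(\lambda_1) + \lambda(p)\sum_{i=2}^n c_i^2 \;=\; \frac{|U|^2}{n}\, p(\lambda_1) + \lambda(p)\left(|U| - \frac{|U|^2}{n}\right),
\]
where the inequality uses the definition $\lambda(p) = \min_{i \ge 2} p(\lambda_i)$.

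Combining the two estimates yields
\[
|U|\, W(p) \;\ge\; \frac{|U|^2}{n}\bigl(p(\lambda_1) - \lambda(p)\bigr) + |U|\,\lambda(p),
\]
which rearranges to $|U|\bigl(W(p) - \lambda(p)\bigr) \ge \tfrac{|U|^2}{n}\bigl(p(\lambda_1) - \lambda(p)\bigr)$. The hypothesis $p(\lambda_1) > \lambda(p)$ then allows division by $p(\lambda_1) - \lambda(p)$, producing the desired bound
\[
\alpha_k = |U| \;\le\; n\,\frac{W(p) - \lambda(p)}{p(\lambda_1) - \lambda(p)}.
\]

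The most delicate point is the lower bound: one must justify that $\1$ is indeed an eigenvector of $B$ attached to the eigenvalue $p(\lambda_1)$ (requiring $G$ regular so that $\lambda_1 = \delta$ has multiplicity aligned with the span of $\1$), and that the relevant Rayleigh quotient on $\1^{\perp}$ is bounded below by $\lambda(p)$ rather than by $\min_i p(\lambda_i)$; this latter fact is exactly why the minimum in the definition of $\lambda(p)$ starts from $i=2$. Note that $p$ need not be monotone, so $p(\lambda_1)$ is not necessarily the largest of the $p(\lambda_i)$, but the argument only needs the spectral decomposition on $\1^{\perp}$ and not any ordering of the $p(\lambda_i)$.
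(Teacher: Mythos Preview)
Your proof is correct and follows the standard Hoffman-type Rayleigh quotient argument applied to $p(A)$; this is essentially the approach of the original source \cite{Abiad2019OnGraphs}. Note that the present paper does not itself supply a proof of this theorem---it is quoted as a cited result---so there is no in-paper argument to compare against, but your derivation matches the expected one.
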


For $k=1$ the Ratio-type bound can be reduced to the well-known ratio bound by Hoffman (unpublished, see e.g. \cite[Theorem 3.2]{Haemers1995InterlacingGraphs}).


For $k=2,3$ there are closed-form expressions for the Ratio-type bound that no longer depend on the choice of $p \in \mathbb{R}_k[x]$ and that are optimal in the sense that no better bound can be obtained via Theorem \ref{th:ratio type bound}. 

\begin{theorem}[\text{\cite[Corollary 3.3]{Abiad2019OnGraphs}}] \label{th: alpha_2}
Let $G$ be a regular graph with $n$ vertices and distinct adjacency eigenvalues $\theta_0 > \theta_1 > \cdots > \theta_r$ with $r \geq 2$. Let $\theta_i$ be the largest eigenvalue such that $\theta_i \leq -1$. Then the 2-independence number $\alpha_2$ of $G$ satisfies
\[
\alpha_2 \leq n \frac{\theta_0 + \theta_i \theta_{i-1}}{(\theta_0 - \theta_i)(\theta_0 - \theta_{i-1})}.
\]
Moreover, this is the best possible bound that can be obtained by choosing a polynomial via Theorem \ref{th:ratio type bound}.
\end{theorem}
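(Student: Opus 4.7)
The plan is to apply the Ratio-type bound of Theorem \ref{th:ratio type bound} with $k=2$ and the quadratic polynomial $p(x) = (x - \theta_i)(x - \theta_{i-1})$, and then verify that this choice is optimal within $\R_2[x]$. Since $G$ is $\delta$-regular with $\delta = \theta_0$, we have $(A)_{uu} = 0$ and $(A^2)_{uu} = \delta = \theta_0$ for every vertex $u$. Hence $(p(A))_{uu} = \theta_0 + \theta_i \theta_{i-1}$ is independent of $u$, so $W(p) = \theta_0 + \theta_i \theta_{i-1}$. Because $p$ is an upward-opening parabola whose roots are precisely the consecutive distinct eigenvalues $\theta_i < \theta_{i-1}$, we have $p(\theta_j) \geq 0$ for all $j$, with equality exactly at $j \in \{i-1, i\}$, and $p(\theta_0) = (\theta_0 - \theta_i)(\theta_0 - \theta_{i-1}) > 0$. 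Therefore $\lambda(p) = 0$ and the hypothesis $p(\lambda_1) > \lambda(p)$ holds. Plugging into Theorem \ref{th:ratio type bound} yields the claimed inequality.

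For optimality, I would first observe that the bound $n(W(p) - \lambda(p))/(p(\theta_0) - \lambda(p))$ is invariant under $p \mapsto p+c$ for $c \in \R$ and $p \mapsto cp$ for $c > 0$. These invariances allow normalization so that $\lambda(p) = 0$ and the leading coefficient lies in $\{0, 1\}$; the negative-leading-coefficient case is excluded by the feasibility hypothesis $p(\theta_0) > \lambda(p)$. In the linear case one recovers the Hoffman ratio bound, while in the monic quadratic case $p(x) = (x-\alpha)(x-\beta)$ with $\alpha \leq \beta$, the non-negativity constraint $p(\theta_j) \geq 0$ for all $j \geq 1$ forces the interval $[\alpha, \beta]$ to sit inside one of the ``gaps'' $[\theta_j, \theta_{j-1}]$ between consecutive distinct eigenvalues. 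It then suffices to minimize
\[
f(\alpha, \beta) = \frac{\theta_0 + \alpha \beta}{(\theta_0 - \alpha)(\theta_0 - \beta)}
\]
over all such gap-constrained pairs. A direct differentiation yields
\[
\frac{\partial f}{\partial \alpha} = \frac{\theta_0 (\beta + 1)}{(\theta_0 - \alpha)^2 (\theta_0 - \beta)}, \qquad \frac{\partial f}{\partial \beta} = \frac{\theta_0 (\alpha + 1)}{(\theta_0 - \alpha)(\theta_0 - \beta)^2},
\]
so, since $\theta_0, \theta_0 - \alpha, \theta_0 - \beta > 0$, the signs are governed by $\beta+1$ and $\alpha+1$ respectively. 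This identifies $-1$ as the critical threshold: inside the unique gap $[\theta_i, \theta_{i-1}]$ straddling $-1$, the partial derivatives push the minimizer to the corner $(\alpha, \beta) = (\theta_i, \theta_{i-1})$, whereas in any gap lying strictly above or strictly below $-1$ both partials have constant sign, driving the optimum toward the $-1$-gap.

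The main obstacle is the cross-gap comparison: showing that the value $f(\theta_i, \theta_{i-1})$ in the critical gap is strictly dominated by the value of $f$ at the optimum of any other gap, as well as by the Hoffman bound coming from the linear case. Both comparisons should reduce to short algebraic manipulations that exploit $\theta_0 > 0$ together with the relative position of $-1$, but some care is required to handle the degenerate cases $\alpha = \beta$ (double root, placed at some eigenvalue to ensure $\lambda(p) = 0$) and the boundary endpoints $\alpha = -1$ or $\beta = -1$. Once these are settled, the stated expression emerges as the global optimum, proving both the bound and its optimality.
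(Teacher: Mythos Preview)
The paper does not prove this theorem: it is quoted as a black box from \cite[Corollary~3.3]{Abiad2019OnGraphs} and then applied in Section~\ref{sec:results}. There is therefore no in-paper argument to compare against, and I can only assess your attempt on its own merits.

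Your first paragraph is correct and complete. Taking $p(x)=(x-\theta_i)(x-\theta_{i-1})$ in Theorem~\ref{th:ratio type bound}, the $\theta_0$-regularity gives $(p(A))_{uu}=\theta_0+\theta_i\theta_{i-1}$ for every $u$, and since $\theta_i,\theta_{i-1}$ are \emph{consecutive} distinct eigenvalues, the parabola is nonnegative on the whole spectrum with $\lambda(p)=0$; the displayed bound drops out immediately.

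Your optimality sketch has the right skeleton, and the partial-derivative computation
\[
\frac{\partial f}{\partial \alpha}=\frac{\theta_0(\beta+1)}{(\theta_0-\alpha)^2(\theta_0-\beta)},\qquad
\frac{\partial f}{\partial \beta}=\frac{\theta_0(\alpha+1)}{(\theta_0-\alpha)(\theta_0-\beta)^2}
\]
is correct and is precisely what singles out the gap straddling $-1$. What remains incomplete---and you flag this yourself---is (i) the cross-gap comparison showing that the corner value $f(\theta_i,\theta_{i-1})$ beats the optimum over every other gap $[\theta_j,\theta_{j-1}]$, (ii) the comparison with the degree-one (Hoffman) case, and (iii) the bookkeeping for the degenerate configurations (double root, or a root equal to $\theta_0$). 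None of these is hard: for (i), the monotonicity you established shows that in a gap lying entirely above $-1$ the minimum of $f$ is achieved at its left endpoint and equals $f(\theta_j,\theta_j)$, which one checks is at least $f(\theta_i,\theta_{i-1})$ by a one-line inequality using $\theta_j>-1\ge\theta_i$; the gap-below-$-1$ case is symmetric. For (ii), the linear polynomial $p(x)=x-\theta_r$ gives the Hoffman value $n\,\theta_0/(\theta_0-\theta_r)$, and comparing with $f(\theta_i,\theta_{i-1})$ again reduces to $\theta_i\le -1$. Once these short verifications are written out your argument is complete; the route via explicit differentiation is a clean alternative to the original proof in \cite{Abiad2019OnGraphs}.
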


\begin{theorem}[\text{\cite[Theorem 11]{Kavi2023TheMethod}}] \label{th: alpha3}
Let $G$ be a regular graph with $n$ vertices, distinct adjacency eigenvalues $\theta_0> \theta_1 > \cdots > \theta_r$ with $r \geq 3$, and adjacency matrix $A$. Let $\theta_s$ be the smallest eigenvalue such that $\theta_s \geq - \frac{\theta_0^2 + \theta_0 \theta_r - \Delta}{\theta_0 (\theta_r+1)}$, where $\Delta = \max_{u \in V(G)} \{(A^3)_{uu}\}$. Then the 3-independence number $\alpha_3$ of $G$ satisfies
\[
\alpha_3 \leq n \frac{\Delta - \theta_0 (\theta_s + \theta_{s+1} + \theta_r) - \theta_s \theta_{s+1} \theta_r}{(\theta_0 - \theta_s)(\theta_0 - \theta_{s+1})(\theta_0 - \theta_r)}.
\]
Moreover, this is the best possible bound that can be obtained by choosing a polynomial via Theorem \ref{th:ratio type bound}.
\end{theorem}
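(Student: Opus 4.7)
The plan is to apply the Ratio-type bound (Theorem~\ref{th:ratio type bound}) with the monic cubic
\[
p(x) = (x-\theta_r)(x-\theta_{s+1})(x-\theta_s),
\]
where $s$ is the index selected in the statement. Because $p$ has positive leading coefficient, it is negative exactly on $(-\infty,\theta_r) \cup (\theta_{s+1},\theta_s)$. By construction, $\theta_r$ is the smallest eigenvalue of~$G$ and $\theta_s,\theta_{s+1}$ are consecutive in the ordering of the distinct eigenvalues, so no $\theta_i$ with $i\geq 1$ lies in either interval. Hence $p(\theta_i)\geq 0$ for every $i\geq 1$, with equality at exactly three points, giving $\lambda(p)=0$.

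Next I would compute $W(p)$. Since $G$ is regular of degree $\theta_0$, the diagonals of $I$, $A$, $A^2$ are constant with values $1$, $0$, $\theta_0$, whereas only $(A^3)_{uu}$ may depend on $u$; as the leading coefficient of $p$ is positive,
\[
W(p) = \max_{u\in V(G)} (p(A))_{uu} = \Delta - \theta_0(\theta_s+\theta_{s+1}+\theta_r) - \theta_s\theta_{s+1}\theta_r.
\]
Combining this with $p(\theta_0)=(\theta_0-\theta_s)(\theta_0-\theta_{s+1})(\theta_0-\theta_r)$ and $\lambda(p)=0$ in Theorem~\ref{th:ratio type bound} yields the claimed upper bound on $\alpha_3$.

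For the optimality statement, I would recast the search for the best admissible polynomial as a linear program: after rescaling so that $\lambda(p)=0$ and $p(\theta_0)=1$, minimizing the Ratio-type bound over cubics becomes
\[
\min\ a_3\Delta + a_2\theta_0 + a_0 \quad\text{subject to}\quad p(\theta_0)=1,\ p(\theta_i)\geq 0\ \text{for } i=1,\ldots,r,
\]
in the four coefficients $a_0,\ldots,a_3$. At a basic optimum, the equality together with three inequalities is active, so $p$ vanishes on three eigenvalues. Since the cubic has positive leading coefficient and must be non-negative on every $\theta_i$ with $i\geq 1$, its smallest root is forced to be $\theta_r$, and the other two roots must be consecutive in the eigenvalue ordering (otherwise some $\theta_i$ would sit in the negative interval between them). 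This pins the shape of the optimizer to $(x-\theta_r)(x-\theta_{s+1})(x-\theta_s)$ for some admissible index $s$.

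It then remains to pick the correct $s$. Writing $f(s)$ for the resulting ratio $W(p_s)/p_s(\theta_0)$, I would compare $f(s)$ with $f(s-1)$ by cross-multiplying and cancelling the common factors $(\theta_0-\theta_s)(\theta_0-\theta_r)$; the inequality $f(s)\leq f(s-1)$ then simplifies algebraically to exactly $\theta_s \geq -(\theta_0^2 + \theta_0\theta_r - \Delta)/(\theta_0(\theta_r+1))$, so the minimum of $f$ is attained at the smallest $s$ satisfying this condition, which is precisely the choice in the statement. The main obstacle will be carrying out this last simplification cleanly: many mixed terms in $\theta_s, \theta_{s-1}, \theta_{s+1}, \theta_r, \theta_0$ and $\Delta$ appear and cancel only after careful grouping, and degenerate situations such as $\theta_r=-1$ (where the sign of $\theta_0(\theta_r+1)$ flips) or a coincidence of roots will need to be treated separately.
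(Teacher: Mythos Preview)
The paper does not prove this theorem at all: it is quoted verbatim from \cite[Theorem 11]{Kavi2023TheMethod} and used as a black box in the later applications (Theorems~\ref{alpha3 q2 pr graph} and~\ref{th: alpha3 q3 pr graph}). There is therefore no ``paper's own proof'' to compare your proposal against.

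That said, your sketch for the first part is essentially the standard argument and is correct: plugging the monic cubic with roots $\theta_r,\theta_{s+1},\theta_s$ into Theorem~\ref{th:ratio type bound}, using that $(I)_{uu}=1$, $(A)_{uu}=0$, $(A^2)_{uu}=\theta_0$ for a regular graph, and $\max_u (A^3)_{uu}=\Delta$, gives $\lambda(p)=0$, $W(p)=\Delta-\theta_0(\theta_s+\theta_{s+1}+\theta_r)-\theta_s\theta_{s+1}\theta_r$, and $p(\theta_0)=(\theta_0-\theta_s)(\theta_0-\theta_{s+1})(\theta_0-\theta_r)$, exactly as claimed.

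For the optimality part your LP-duality outline is the right idea, but be aware of two points you will need to make precise. First, the objective $W(p)=\max_u (p(A))_{uu}$ is only linear in the coefficients once you fix the sign of $a_3$ (since only the $A^3$-diagonal varies over vertices); you implicitly assume $a_3>0$, so you must argue separately that the optimal cubic cannot have $a_3\le 0$ (or degree $\le 2$). Second, the ``three active inequalities at a basic optimum'' count is correct for four free coefficients and one equality, but you should also check that the optimal value is attained (the feasible region is unbounded) and that the basic optimum you describe is indeed a minimizer rather than a saddle. These are exactly the kinds of case distinctions handled in the source \cite{Kavi2023TheMethod}; your identification of the $\theta_r=-1$ degeneracy is on point.
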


In \cite{Fiol2020ANumber} an LP has been proposed that finds the optimal polynomial for the Ratio-type bound, which is the polynomial that minimizes the upper bound on the $k$-independence number, in case the graph $G$ is $k$-partially walk-regular. The objective value of this LP, which uses the socalled minor polynomials, subsequently equals this minimized upper bound. Let $G$ be a $k$-partially walk-regular graph with distinct adjacency eigenvalues $\theta_0 > \theta_1 > \cdots > \theta_r$ with respective multiplicities $m_0, m_1, \ldots, m_r$. The \textit{$k$-minor polynomial} is the polynomial $f_k \in \mathbb{R}_k[x]$ that minimizes $\sum_{i=0}^r m_i f(\theta_i)$. Define the polynomial $f_k$ as $f_k(\theta_0) := x_0 = 1$ and $f_k(\theta_i) := x_i$ for $i=1, \ldots, r$, where $(x_1, \ldots, x_r)$ is a solution of the following LP:

\begin{center}
\fbox{\parbox{\linewidth}{
\begin{equation} \label{eq: lp ratio}
\begin{aligned}
\text{minimize } & \sum_{i=0}^r m_ix_i & \\
\text{subject to } & f[\theta_0, \ldots. \theta_s] = 0, &s&=k+1, \ldots, r \\
& x_i \geq 0, &i&=1, \ldots, r  
\end{aligned}
\end{equation}}}
\end{center}

Here $f[\theta_0, \ldots, \theta_s]$ denotes the \textit{$s$-th divided difference of Newton interpolation}, recursively defined by
\[
f[\theta_i, \ldots, \theta_j] := \frac{f[\theta_{i+1}, \ldots, \theta_j] - f[\theta_i, \ldots, \theta_{j-1}]}{\theta_j - \theta_i},
\]
where $j>i$, starting with $f[\theta_i] = f(\theta_i) = x_i$ for $i=0,1,\ldots, r$. The best upper bound for the $k$-independence number of $k$-partially walk-regular graphs that can be obtained via the Ratio-type bound is exactly the objective value of LP \eqref{eq: lp ratio}, which follows from \cite[Theorem 4.1]{Fiol2020ANumber}.

Note that the Lovász theta number is at least as good as the Ratio-type bound, but the Ratio-type bound can be computed exactly and more efficiently since it only requires the graph spectrum, while the Lovász theta number is an approximation and requires the graph adjacency matrix. The relation between the performance of the Lovász theta number and the Inertia-type bound is not known.

\section{Improved bounds in several metrics}\label{sec:results}

In this section we apply the Eigenvalue Method to some discrete metric spaces to estimate the size of codes. The results show that it is a new powerful tool for coding theory. Indeed, the bounds obtained using the Eigenvalue Method turn out to improve on state of the art upper bounds on the cardinality of codes in those discrete metric spaces. See Table \ref{tab:metrics overview} for an overview of the metrics already considered in literature and considered in the remainder of this paper. In this section we consider discrete metric spaces with the following distance functions: the city block metric, the projective metric (which is a class of distance functions that includes well-known metrics such as the Hamming metric, the rank metric and the sum-rank metric), and the phase-rotation metric.

\begin{table}[ht] 
\centering
\begin{tabular}{|l l|c|c|}
\hline
\multicolumn{2}{|c|}{Metric} & Sharp & Improvement \\
\hline 
\hline
Alternating rank & \cite{Abiad2024EigenvalueCodes} & Ratio-type & - \\
\hline
Block & Section~\ref{sec: block} & Ratio-type & - \\
\hline
City block & Section~\ref{sec: city block} & Inertia-type & Inertia-type \\ 
\hline
Cyclic $b$-burst & Section~\ref{sec: cyclic b burst} & Ratio-type & - \\ 
\hline
Lee & \cite{Abiad2024EigenvalueCodesb} & Ratio-type & Ratio-type \\
\hline
Phase-rotation & Section~\ref{sec: phase rotation} & Inertia-type, Ratio-type & Inertia-type, Ratio-type \\ 
\hline
Sum-rank & \cite{Abiad2023EigenvalueCodes} & Ratio-type & Ratio-type \\
\hline
Varshamov & Section~\ref{sec: varshamov} & Inertia-type & - \\ 
\hline
\end{tabular}
\caption{Overview of the metrics studied in literature and in this paper in the context of the Eigenvalue Method. If one of the proposed spectral bounds is sharp in some instances, this is indicated in the column ``Sharp''. If a spectral bound gives an improvement compared to the state of the art bounds in some instances, this is indicated in the column ``Improvement''.}
\label{tab:metrics overview}
\end{table}

\subsection{City block metric} \label{sec: city block}
The city block metric, also called the $L^1$-metric or the Manhattan metric, was used already in the 18th century. Its first appearance in a coding-theoretical context is in \cite{Ulrich1957NonBinaryCodes}, although it is not properly defined as a metric yet. The city block metric can be viewed as an extension of the Lee metric, which is widely used in coding theory, to $\mathbb{Z}^n$ instead of $\Fq^n$.

\begin{definition}
The \textit{city block distance} between $\textbf{x}=(x_1,\ldots, x_n),\textbf{y}=(y_1,\ldots,y_n) \in [\![m-1]\!]^n$ is defined as $d_\text{cb}(\textbf{x},\textbf{y}) := \sum_{i=1}^n |x_i-y_i|$.
\end{definition} 


Note that for $m=2$ the city block metric coincides with the Hamming metric. Therefore we assume $m \geq 3$. 

Now we consider the discrete metric space $([\![m-1]\!]^n, d_\text{cb})$ and apply the Eigenvalue Method. Define the \textit{city block distance graph} $G_\text{cb}([\![m-1]\!]^n)$ as the graph with vertex set $[\![m-1]\!]^n$ where vertices $\textbf{x},\textbf{y} \in [\![m-1]\!]^n$ are adjacent if $d_\text{cb}(\textbf{x},\textbf{y})=1$. First we verify that condition \ref{item: C1} holds for this graph.

\begin{lemma}
The geodesic distance in $G_\text{cb}(\mathbb{Z}_m^n)$ equals the city block distance.
\end{lemma}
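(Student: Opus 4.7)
The plan is to prove the equality $d_{G_\text{cb}([\![m-1]\!]^n)}(\mathbf{x},\mathbf{y}) = d_\text{cb}(\mathbf{x},\mathbf{y})$ by establishing the two inequalities separately, since both sides are nonnegative integers. Note first that the statement in the lemma uses $\mathbb{Z}_m^n$ but the graph is defined on $[\![m-1]\!]^n$; I will work with the latter, in line with the surrounding text.

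For the upper bound $d_G \le d_\text{cb}$, I would construct an explicit walk of length $d_\text{cb}(\mathbf{x},\mathbf{y})$ from $\mathbf{x}$ to $\mathbf{y}$ in $G_\text{cb}([\![m-1]\!]^n)$. Process the coordinates one by one: for coordinate $i$, if $x_i < y_i$, perform $y_i - x_i$ consecutive steps each of which adds $1$ to the current $i$-th coordinate; if $x_i > y_i$, perform $x_i - y_i$ steps each subtracting $1$. Every intermediate vector stays in $[\![m-1]\!]^n$ because the $i$-th coordinate moves monotonically between $x_i$ and $y_i$, and each step has city block distance exactly $1$ from the previous vector, hence corresponds to an edge of $G_\text{cb}([\![m-1]\!]^n)$. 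The total number of steps is $\sum_{i=1}^n |x_i - y_i| = d_\text{cb}(\mathbf{x},\mathbf{y})$, so the geodesic distance is at most this value.

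For the lower bound $d_\text{cb} \le d_G$, I would use that along any edge $\{\mathbf{u},\mathbf{v}\}$ of $G_\text{cb}([\![m-1]\!]^n)$ one has $d_\text{cb}(\mathbf{u},\mathbf{v}) = 1$ by definition of the graph, and that $d_\text{cb}$ is a metric on $[\![m-1]\!]^n$ (inherited from the $L^1$-metric on $\mathbb{Z}^n$). A straightforward induction on the length $\ell$ of a shortest path $\mathbf{x} = \mathbf{z}_0, \mathbf{z}_1, \ldots, \mathbf{z}_\ell = \mathbf{y}$ in $G_\text{cb}([\![m-1]\!]^n)$, combined with the triangle inequality $d_\text{cb}(\mathbf{x},\mathbf{y}) \le \sum_{j=1}^{\ell} d_\text{cb}(\mathbf{z}_{j-1},\mathbf{z}_j) = \ell$, gives $d_\text{cb}(\mathbf{x},\mathbf{y}) \le d_G(\mathbf{x},\mathbf{y})$.

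I do not expect any genuine obstacle here; this is essentially a verification that the $L^1$-metric on a discrete box is realized by unit steps along coordinate axes. The only mild subtlety to address in the write-up is to check that the path constructed for the upper bound stays inside the box $[\![m-1]\!]^n$ (which follows from monotonicity of each coordinate along the path), ensuring each intermediate vector is indeed a vertex of $G_\text{cb}([\![m-1]\!]^n)$.
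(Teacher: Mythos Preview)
Your proposal is correct and follows essentially the same approach as the paper: both construct an explicit coordinate-by-coordinate path of length $d_\text{cb}(\mathbf{x},\mathbf{y})$ for the upper bound and invoke the triangle inequality for $d_\text{cb}$ along a shortest graph path for the lower bound. Your write-up is in fact slightly more careful than the paper's, as you explicitly note that monotonicity keeps the intermediate vectors inside $[\![m-1]\!]^n$.
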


\begin{proof}
Let $\textbf{x}=(x_1,\ldots,x_n),\textbf{y}=(y_1,\ldots,y_n) \in [\![m-1]\!]^n$. Define $r_i := |x_i-y_i|$ for $i=1, \ldots, n$. Then $d_\text{cb}(\textbf{x},\textbf{y}) = \sum_{i=1}^n r_i$. We can make a path in $G_\text{cb}(\mathbb{Z}_m^n)$ from $\textbf{x}$ to the vertex where the first coordinate is replaced by $y_1$ of length $r_1$ by going to a neighboring vertex which has 1 added to (or subtracted from) the first coordinate until we reach the desired vertex $(y_1, x_2, \dots, x_n)$.
Similarly, we can make a path in $G_\text{cb}([\![m-1]\!]^n)$ from $(y_1, x_2, \ldots, x_n)$ to the vertex where the second coordinate is replaced by $y_2$ of length $r_2$, and so on for all other coordinates. Traversing these paths one after another gives a path of length $\sum_{i=1}^n r_i$ from vertex $\textbf{x}$ to vertex $\textbf{y}$ in $G_\text{cb}([\![m-1]\!]^n)$. So the geodesic distance from $\textbf{x}$ to $\textbf{y}$ is at most $\sum_{i=1}^n r_i$. 

If the geodesic distance is less than $\sum_{i=1}^n r_i$, then, using the same path construction as before, it follows from the triangle inequality that $d_\text{cb}(\textbf{x},\textbf{y}) < \sum_{i=1}^n r_i$. This is a contradiction, so the geodesic distance in $G_\text{cb}([\![m-1]\!]^n)$ equals the city block distance.
\end{proof}

Since condition \ref{item: C1} holds, the Eigenvalue Method is applicable to the discrete metric space $([\![m-1]\!]^n, d_\text{cb})$. Next we check the desired properties \ref{item: C2}, \ref{item: C3}, and \ref{item: C4}.

\begin{remark} \label{remark: Gcb not reg}
The graph $G_\text{cb}([\![m-1]\!]^n)$ is not regular. The neighbors of $\textbf{0}$ are the vectors $\textbf{x}=(x_1,\ldots,x_n) \in [\![m-1]\!]^n$ such that $x_i = 1$ for exactly one $i \in \{1, \ldots, n\}$ and $x_j=0$ for all $j \neq i$. So $\textbf{0}$ has $n$ neighbors. The neighbors of $\textbf{1}$ are the vectors $\textbf{x}=(x_1,\ldots,x_n) \in [\![m-1]\!]^n$ such that $x_i = 0$ or $x_i = 2$ for exactly one $i \in \{1, \ldots, n\}$ and $x_j = 1$ for all $j \neq i$. So $\textbf{1}$ has $2n$ neighbors, implying that $G_\text{cb}([\![m-1]\!]^n)$ is not regular. Hence $G_\text{cb}([\![m-1]\!]^n)$ is also not walk-regular nor distance-regular. 
\end{remark}

Remark \ref{remark: Gcb not reg} shows that properties \ref{item: C2} and \ref{item: C3} are not satisfied, while \ref{item: C4} is. This implies that only the Inertia-type bound is applicable to the graph $G_\text{cb}([\![m-1]\!]^n)$ and not the Ratio-type bound as it requires regularity of the graph. In order to apply the former bound, we first determine the adjacency eigenvalues of the graph. These eigenvalues follow directly from the next result.

\begin{lemma}\label{lemma: cart prod path graphs cb}
The graph $G_\text{cb}([\![m-1]\!]^n)$ equals the Cartesian product of $n$ path graphs on $m$ vertices.
\end{lemma}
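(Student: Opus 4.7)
The plan is to prove this by directly matching the vertex sets and then unpacking the adjacency conditions on both sides, using induction on $n$ to handle the iterated Cartesian product.

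First I would identify the vertex sets. Let $P_m$ denote the path graph on $m$ vertices, which I would label by $\{0, 1, \ldots, m-1\}$ so that two vertices $u, v \in P_m$ are adjacent precisely when $|u - v| = 1$. By the inductive definition of the Cartesian product recalled in the preliminaries, the vertex set of $P_m \Box P_m \Box \cdots \Box P_m$ ($n$ factors) is the $n$-fold Cartesian product of $\{0, 1, \ldots, m-1\}$, which is exactly $[\![m-1]\!]^n$, matching the vertex set of $G_\text{cb}([\![m-1]\!]^n)$.

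Next I would show that the edge sets coincide. Two vertices $\textbf{x}, \textbf{y} \in [\![m-1]\!]^n$ are adjacent in $G_\text{cb}([\![m-1]\!]^n)$ iff $\sum_{i=1}^n |x_i - y_i| = 1$. Because each term $|x_i - y_i|$ is a non-negative integer, this sum equals $1$ iff there is a unique index $j \in \{1, \ldots, n\}$ with $|x_j - y_j| = 1$ and $x_i = y_i$ for every $i \neq j$. This is precisely the condition characterizing adjacency in $P_m^{\Box n}$: I would verify this by induction on $n$, where the base case $n = 1$ is immediate from the definition of $P_m$, and in the inductive step one uses the defining rule that $(g_1, h_1) \sim (g_2, h_2)$ in $G \Box H$ iff either $g_1 = g_2$ and $h_1 \sim h_2$, or $g_1 \sim g_2$ and $h_1 = h_2$, applied with $G = P_m^{\Box (n-1)}$ and $H = P_m$.

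There is no real obstacle here since the statement is essentially an unpacking of definitions; the only point requiring mild care is the inductive formulation of the Cartesian product of more than two graphs, which needs to be spelled out so that the ``adjacent in exactly one coordinate'' description is justified. Once that is in place, the identification of the adjacency conditions is immediate and the lemma follows. This description also makes the adjacency spectrum of $G_\text{cb}([\![m-1]\!]^n)$ explicit through the Cartesian-product eigenvalue formula applied to the well-known spectrum of $P_m$, which is what the subsequent application of the Inertia-type bound will require.
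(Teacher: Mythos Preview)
Your proposal is correct and follows essentially the same approach as the paper: induction on $n$, with the base case $n=1$ identifying $G_\text{cb}([\![m-1]\!])$ with $P_m$, and the inductive step unpacking the Cartesian-product adjacency rule for $P_m^{\Box(n-1)} \Box P_m$ to match the condition $d_\text{cb}(\textbf{x},\textbf{y})=1$. Your explicit observation that $\sum_i |x_i-y_i|=1$ forces exactly one coordinate to differ by $1$ is precisely the content of the paper's case split in the inductive step.
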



\begin{proof}[Proof of Lemma \ref{lemma: cart prod path graphs cb}]
Fix $m$. We prove the result by induction on $n$. 

For $n=1$ the graph $G_\text{cb}([\![m-1]\!]^n)$ has $m$ vertices indexed as $0,1, \ldots, m-1$ and edge set $\{(i,i+1): i=0, \ldots, m-2\}$. So $G_\text{cb}([\![m-1]\!]^n)$ indeed equals the path graph on $m$ vertices (after renumbering of the vertices).

Now suppose that the city block distance graph for the discrete metric space $([\![m-1]\!]^n, d_\text{cb})$ equals the Cartesian product of $n$ path graphs on $m$ vertices: 
\[
\underbrace{P_m \Box \cdots \Box P_m}_{n \text{ times}} =: G,
\]
where $P_m$ denotes the path graph on $m$ vertices. Consider the city block distance graph $G_\text{cb}([\![m-1]\!]^{n+1})$, so for $n+1$. The vertex set of $G \Box P_m$ equals the vertex set of $G_\text{cb}([\![m-1]\!]^{n+1})$ (given the right naming of the vertices of $P_m$, namely 0 through $m-1$). Let $\textbf{x}=(x_1,\ldots, x_{n+1}),\textbf{y}=(y_1,\ldots, y_{n+1}) \in [\![m-1]\!]^{n+1}$ be two adjacent vertices in $G \Box P_m$. Then 
\begin{itemize}
\item $(x_1, \ldots, x_n) = (y_1, \ldots, y_n)$ and $x_{n+1} \sim y_{n+1}$ so $d_\text{cb}((x_1, \ldots, x_n), (y_1, \ldots, x_n) = 0$ and $d_\text{cb}(x_{n+1}, y_{n+1}) = 1$, or
\item $(x_1, \ldots, x_n) \sim (y_1, \ldots, y_n)$ and $x_{n+1} = y_{n+1}$ so $d_\text{cb}((x_1, \ldots, x_n), (y_1, \ldots, x_n) = 1$ and $d_\text{cb}(x_{n+1}, y_{n+1}) = 0$. 
\end{itemize}
In both cases $d_\text{cb}(\textbf{x},\textbf{y}) = 1$, so $\textbf{x}$ and $\textbf{y}$ are adjacent in $G_\text{cb}([\![m-1]\!]^{n+1})$. Moreover, these are the only two options when $\textbf{x}, \textbf{y} \in [\![m-1]\!]^{n+1}$ are adjacent in $G_\text{cb}([\![m-1]\!]^{n+1})$. Hence the graph $G_\text{cb}([\![m-1]\!]^{n+1})$ is equal to the Cartesian product of graphs $G$ and $P_m$. By induction, $G_\text{cb}([\![m-1]\!]^{n})$ equals the Cartesian product of~$n$ path graphs on $m$ vertices.
\end{proof}

Now we are ready to derive the eigenvalue of our graph of interest.

\begin{lemma}
The adjacency eigenvalues of $G_\text{cb}([\![m-1]\!]^n)$ are
\[
\lambda_\textbf{k} = \sum_{j=1}^n 2\cos \left(\frac{k_j \pi}{m+1} \right)
\]
for every tuple $\textbf{k} = (k_1, \ldots, k_n) \in [m]^n$.
\end{lemma}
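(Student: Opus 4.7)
The plan is to combine two standard ingredients: the factorization of $G_\text{cb}([\![m-1]\!]^n)$ as a Cartesian product (already established in Lemma~\ref{lemma: cart prod path graphs cb}) and the classical spectrum of the path graph.

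First I would invoke Lemma~\ref{lemma: cart prod path graphs cb} to write $G_\text{cb}([\![m-1]\!]^n) = P_m \Box \cdots \Box P_m$ ($n$ factors). Next I would quote the well-known fact (see, e.g., \cite{Cvetkovic1980SpectraApplications}) that the adjacency eigenvalues of the path graph $P_m$ on $m$ vertices are
\[
\mu_k = 2\cos\!\left(\frac{k\pi}{m+1}\right), \qquad k \in [m],
\]
each of multiplicity one. If one prefers a self-contained argument, these can be derived by directly checking that the vectors $v^{(k)} \in \R^m$ with entries $v^{(k)}_j = \sin\!\big(jk\pi/(m+1)\big)$, for $j \in [m]$, are eigenvectors of the adjacency matrix of $P_m$ with eigenvalue $\mu_k$; orthogonality of the $v^{(k)}$ and a dimension count then yield the full spectrum.

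Having the spectrum of $P_m$, the eigenvalues of the Cartesian product of finitely many graphs are obtained as all sums of eigenvalues of the factors, a fact already recalled in the preliminaries of Section~\ref{sec:preliminaries}. Applying this to $P_m \Box \cdots \Box P_m$ gives exactly
\[
\lambda_\textbf{k} = \sum_{j=1}^n 2\cos\!\left(\frac{k_j \pi}{m+1}\right), \qquad \textbf{k} = (k_1, \ldots, k_n) \in [m]^n,
\]
as claimed. There is no real obstacle here: the only thing to be careful about is indexing (the paper uses $[m] = \{1, \ldots, m\}$, which precisely matches the range of the path spectrum), and to note that each tuple $\textbf{k}$ genuinely contributes one eigenvalue (with multiplicities counted across coinciding sums), so the $m^n$ values listed account for the full spectrum of the $m^n$-vertex graph.
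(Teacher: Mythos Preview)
Your proposal is correct and follows exactly the same approach as the paper: invoke Lemma~\ref{lemma: cart prod path graphs cb} to factor the graph as $P_m \Box \cdots \Box P_m$, cite the known spectrum of $P_m$, and apply the standard formula for eigenvalues of a Cartesian product. The paper's proof is in fact terser than yours, omitting the optional self-contained derivation of the path spectrum and the remark about multiplicities.
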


\begin{proof}
The adjacency eigenvalues of the path graph $P_m$ are given by $\lambda_k = 2 \cos \left( \frac{k \pi}{m+1} \right)$ for $k \in [m]$ (see for instance \cite{Cvetkovic2009ApplicationsLiterature}). Since $G_\text{cb}([\![m-1]\!]^n)$ equals the Cartesian product of $n$ path graphs $P_m$, the result follows.
\end{proof}

This expression for the eigenvalues of the city block distance graph can now be used in the Inertia-type bound to obtain new bounds on the maximum cardinality of codes in the city block metric. We then compare the obtained bounds to state of the art bounds: the Plotkin-type bound and the Hamming-type bound. 

\begin{theorem}[\text{Plotkin-type bound, \cite[Theorem 13.49]{Berlekamp1968TheCodes}}] \label{th: plotkin city block}
Let $\mathcal{C} \subseteq [\![m-1]\!]^n$ be a code of minimum city block distance $d$. If $d> \frac{n(m-1)}{2}$, then 
\[
|\mathcal{C}| \leq \frac{2d}{2d-n(m-1)}.
\]
\end{theorem}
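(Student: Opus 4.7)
The plan is to mimic the classical proof of the Plotkin bound for the Hamming metric, adapted to the $L^1$ structure of $d_{\text{cb}}$. The idea is to double count the sum of pairwise distances $S := \sum_{\mathbf{x},\mathbf{y}\in\mathcal{C}} d_{\text{cb}}(\mathbf{x},\mathbf{y})$ from two sides.

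First, I would establish the lower bound: since any two distinct codewords are at distance at least $d$, and there are $|\mathcal{C}|(|\mathcal{C}|-1)$ ordered pairs of distinct codewords, we get $S \geq |\mathcal{C}|(|\mathcal{C}|-1)\, d$. Setting $M := |\mathcal{C}|$, this is the easy side.

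The main work is the upper bound. I would swap the order of summation: $S = \sum_{i=1}^n \sum_{\mathbf{x},\mathbf{y}\in\mathcal{C}} |x_i - y_i|$. For a fixed coordinate $i$, let $n_j$ denote the number of codewords whose $i$-th coordinate equals $j$, for $j \in [\![m-1]\!]$, so that $\sum_{j=0}^{m-1} n_j = M$. The crucial step is the telescoping identity
\[
|j - k| \;=\; \sum_{\ell=0}^{m-2} \bigl( \mathbbm{1}\{j \leq \ell < k\} + \mathbbm{1}\{k \leq \ell < j\} \bigr),
\]
which lets me rewrite the inner sum as $2 \sum_{\ell=0}^{m-2} A_\ell (M - A_\ell)$, where $A_\ell := \sum_{j \leq \ell} n_j$. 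Applying the AM-GM estimate $A_\ell(M - A_\ell) \leq M^2/4$ to each of the $m-1$ terms yields $\sum_{\mathbf{x},\mathbf{y}} |x_i - y_i| \leq (m-1)M^2/2$ for every coordinate $i$, and therefore $S \leq n(m-1)M^2/2$.

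Combining the two estimates gives $M(M-1)\,d \leq n(m-1)M^2/2$, i.e. $M\bigl(2d - n(m-1)\bigr) \leq 2d$. Under the hypothesis $d > n(m-1)/2$ the factor $2d - n(m-1)$ is strictly positive, so dividing yields the claimed bound $M \leq 2d/(2d - n(m-1))$. The only mildly delicate point is the telescoping identity for $|j-k|$; everything else is a routine double-counting argument, and I do not anticipate a real obstacle.
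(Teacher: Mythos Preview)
Your proof is correct. The telescoping identity for $|j-k|$ is valid, the double-counting is carried out properly, and the final algebraic manipulation is clean.

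The paper does not actually give a self-contained proof of this statement: it is cited from Berlekamp's book, and the only argument the paper supplies is the remark that one obtains the stated inequality from \cite[Theorem~13.49]{Berlekamp1968TheCodes} after computing the average symbol distance $\bar D = \tfrac{1}{m}\sum_{i=1}^{m-1} d_{\text{cb}}(0,i) = \tfrac{m-1}{2}$ and rewriting. In other words, the paper specializes a general Plotkin-type inequality (phrased in terms of $\bar D$) to the city block alphabet, whereas you prove the bound from scratch. Your telescoping device---writing $|j-k|$ as a sum of $m-1$ threshold indicators---is exactly what makes the classical Hamming argument go through coordinatewise, and it recovers the same constant $n(m-1)/2$ that the paper extracts from $\bar D$. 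Your route is more elementary and self-contained; the paper's route is shorter only because it outsources the work to Berlekamp's general theorem.
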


\begin{remark}
The Plotkin-type bound does not follow immediately from \cite[Theorem 13.49]{Berlekamp1968TheCodes}. After the substitution $\Bar{D} := \frac{1}{m} \sum_{i=1}^{m-1} d_\text{cb}(0,i) = \frac{m-1}{2}$, the bound follows from rewriting the inequality in \cite[Theorem 13.49]{Berlekamp1968TheCodes}.
\end{remark}

\begin{theorem}[\text{Hamming-type bound, \cite[Theorem 3.1]{Garcia-Claro2022OnCodes}}] \label{th: Hamming city block}
Let $\mathcal{C} \subseteq [\![m-1]\!]^n$ be a code of minimum city block distance $d$. Define $t:= \lfloor \tfrac{d-1}{2} \rfloor$. Then
\[
|\mathcal{C} | \leq \frac{m^n}{\eta_t \left( [\![m-1]\!]^n \right) },
\]
where $\eta_t([\![m-1]\!]^n) := \min \left\{ |B_t(\textbf{x})|: \textbf{x} \in [\![m-1]\!]^n \right\}$ and $B_t(\textbf{x}) := \left\{\textbf{y} \in [\![m-1]\!]^n: d_\text{cb}(\textbf{x},\textbf{y}) \leq t\right\}$.
\end{theorem}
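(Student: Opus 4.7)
The plan is to adapt the classical sphere-packing argument to the city block setting, since the statement is a Hamming-type bound whose shape (total volume divided by the smallest ball) is a hallmark of that strategy. The ingredients are precisely those of a volume packing argument: pairwise disjointness of balls of radius $t$ centered at the codewords, together with a uniform lower bound on the volume of each such ball.

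First I would verify disjointness. Set $t := \lfloor (d-1)/2 \rfloor$, so that $2t+1 \le d$. Suppose for contradiction that two distinct codewords $\textbf{c}_1, \textbf{c}_2 \in \mathcal{C}$ have a common point $\textbf{z} \in B_t(\textbf{c}_1) \cap B_t(\textbf{c}_2)$. Since $d_\text{cb}$ is a metric on $[\![m-1]\!]^n$ (it is the restriction of the $\ell^1$ distance), the triangle inequality gives
\[
d_\text{cb}(\textbf{c}_1,\textbf{c}_2) \le d_\text{cb}(\textbf{c}_1,\textbf{z}) + d_\text{cb}(\textbf{z},\textbf{c}_2) \le 2t \le d-1,
\]
contradicting that $\mathcal{C}$ has minimum city block distance $d$. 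Hence the balls $\{B_t(\textbf{c})\}_{\textbf{c} \in \mathcal{C}}$ are pairwise disjoint subsets of $[\![m-1]\!]^n$.

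Next I would carry out the volume count. Taking cardinalities and using disjointness,
\[
\sum_{\textbf{c} \in \mathcal{C}} |B_t(\textbf{c})| = \left| \bigcup_{\textbf{c} \in \mathcal{C}} B_t(\textbf{c}) \right| \le \left| [\![m-1]\!]^n \right| = m^n.
\]
By the definition of $\eta_t([\![m-1]\!]^n)$, each summand satisfies $|B_t(\textbf{c})| \ge \eta_t([\![m-1]\!]^n)$, so
\[
|\mathcal{C}| \cdot \eta_t\bigl([\![m-1]\!]^n\bigr) \le m^n,
\]
and dividing yields the claimed inequality.

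There is no real obstacle here: the entire argument rests on the triangle inequality plus a pigeonhole-style volume count, and both the ambient space $[\![m-1]\!]^n$ and the distance $d_\text{cb}$ behave exactly as in the classical Hamming proof. The only mildly delicate point is the need for a uniform lower bound on ball sizes, which is forced because $G_\text{cb}([\![m-1]\!]^n)$ is not vertex-transitive (see Remark~\ref{remark: Gcb not reg}), so $|B_t(\textbf{x})|$ genuinely depends on $\textbf{x}$; this is precisely what the minimum $\eta_t([\![m-1]\!]^n)$ is designed to absorb, and it is this non-regularity of the ambient metric that separates this argument from the homogeneous Hamming-metric version.
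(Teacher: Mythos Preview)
Your argument is correct: it is the standard sphere-packing proof, and every step (disjointness of the radius-$t$ balls via the triangle inequality, the volume count, and the use of the minimum ball size $\eta_t$) is valid in this setting. Note that the paper does not supply its own proof of this statement; it is quoted from \cite[Theorem~3.1]{Garcia-Claro2022OnCodes}, and your write-up matches the classical Hamming/sphere-packing argument one would expect there.
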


In \cite[Algorithm 1]{Garcia-Claro2022OnCodes} an algorithm is described that computes the value of $\eta_t([\![m-1]\!]^n)$ given specific values of $t$ and $\textbf{x}$. We use this algorithm to compute the Hamming-type upper bound in specific instances.

Next, we compare the bound obtained using the Inertia-type bound with the Plotkin-type bound from Theorem \ref{th: plotkin city block} and the Hamming-type bound from Theorem \ref{th: Hamming city block}. We split the discussion in two cases. 

\textbf{Case $k=1$:} 
For $k=1$ the Inertia-type bound reduces to the well-known inertia bound, which is independent of the choice of the polynomial $p \in \R_k[x]$. Therefore, the case $k=1$ is considered first. In this case $d=k+1=2$, so for the Hamming-type bound we get $t:=\lfloor \tfrac{d-1}{2} \rfloor =0$ and $\eta_t([\![m-1]\!]^n) = 1$ since any ball around a code word of radius 0 contains only the code word itself. The Hamming-type bound then becomes $|\mathcal{C}| \leq m^n$, which is a trivial upper bound, so the inertia bound certainly performs better than the Hamming-type bound. Since $d=2$, the condition of the Plotkin-type bound, $d>\tfrac{n(m-1)}{2}$, together with the constraint $m \geq 3$, gives only two instances where the Plotkin-type bound applies: $n=1, m=3$ and $n=1,m=4$. For $n=1, m=3$ both the inertia bound and the Plotkin-type bound give an upper bound of 2. For $n=1,m=4$ the inertia bound gives 2, while the Plotkin-type bound gives 4. So in this instance the inertia bound gives an improved upper bound. Note that in both instances the inertia bound turns out to be sharp. All in all, the inertia bound, which is the Inertia-type bound in the case $k=1$, performs no worse than the Plotkin-type bound and the Hamming-type bound.

\textbf{Case $k \geq 2$:}
Next we consider the case $k \geq 2$. Here we resort to the proposed MILP for computing the value of the Inertia-type bound in specific instances. Since the city block distance graph is not walk-regular, only the slower MILP \eqref{eq: milp inertia} is applicable. This MILP requires the construction of the graph $G_\text{cb}([\![m-1]\!]^n)$ to use the adjacency matrix of this graph. In this case we also compute the Lovász theta number of the graph. 
 
Now we compare the upper bounds from the Inertia-type bound with the Plotkin-type bound, the Hamming-type bound and the Lovász theta number for the following instances:
\[
n =1,2,3, \quad m =3,4, \quad k=1,\ldots, n(m-1)-1.
\]
\[
\text{and } n=1,2, \quad m=5,6, \quad k=1, \ldots n(m-1)-1.
\]
\[
\text{and } n=3, \quad m=5, \quad k=1,\ldots,7.
\]
Note that we are also considering some instances where $k=1$ since we have only seen two of those thus far. The results can be seen in Table \ref{tab: city block results}. The column ``Inertia-type'' gives the output of the Inertia-type bound for the given graph instance. Similarly the column ``$\vartheta(G^k)$'' contains the value of the Lovász theta number, the column ``Plotkin-type'' contains the value of the Plotkin-type upper bound, and the column ``Hamming-type'' contains the value of the Hamming-type bound. The column ``$\alpha_k$'' contains the value of the $k$-independence number of the graph for that instance. Only the instances where the Inertia-type bound performed no worse than the Plotkin-type bound and the Hamming-type bound are present in the table. An upper bound in the column ``Inertia-type'' is marked in bold when it is less than both the Plotkin-type upper bound (if applicable) and the Hamming-type upper bound.

\begin{table}[ht]
\centering
\begin{tabular}{|ccc|c|c|c|c|c|}
\hline
$m$ & $n$ & $k$ & Inertia-type & $\alpha_k$ & $\vartheta(G^k)$ & \makecell{Plotkin-type \\ (Theorem \ref{th: plotkin city block})} & \makecell{Hamming-type \\ (Theorem \ref{th: Hamming city block})} \\
\hline
3 & 1 & 1 &2 & 2 & 2.0 & 2 & 3 \\
4 & 1 & 1 & \textbf{2} & 2 & 2.0 & 4 & 4 \\
4 & 1 & 2 & 2 & 2 & 2.0 & 2 & 2 \\
5 & 1 & 1 & \textbf{3} & 3 & 3.0 & - & 5 \\
5 & 1 & 2 & 2 & 2 & 2.0 & 3 & $\tfrac{5}{2}$ \\
5 & 1 & 3 & 2 & 2 & 2.0 & 2 & $\tfrac{5}{2}$ \\
6 & 1 & 1 & \textbf{3} & 3 & 3.0 & - & 6 \\
6 & 1 & 2 & \textbf{2} & 2 & 2.0 & 6 & 3 \\
6 & 1 & 3 & 2 & 2 & 2.0 & 2 & 3 \\
6 & 1 & 4 & 2 & 2 & 2.0 & 2 & 2 \\
3&2&2&3&2&2.33&3&3\\
4&2&3&\textbf{3}&3&3.0&4&$\tfrac{16}{3}$\\
5&2&4&4&3&3.06&5&$\tfrac{25}{6}$\\
5&2&5&3&2&2.33&3&$\tfrac{25}{6}$\\
6&2&5&6&3&3.17&6&6\\
3&3&3&4&4&4.0&4&$\tfrac{27}{4}$\\
4&3&1&\textbf{32}&32&32.0&-&64\\
4&3&5&4&4&4.0&4&$\tfrac{32}{5}$\\
4&3&8&2&2&2.0&2&2\\

\hline
\end{tabular}
\caption{Results of the Inertia-type bound for the city block metric, compared to the Plotkin-type bound, the Hamming-type bound, the Lovász theta number $\vartheta(G^k)$, and the actual $k$-independence number $\alpha_k$. Improvements of the Inertia-type bound compared to the Plotkin-type bound and the Hamming-type bound are marked in bold.}
\label{tab: city block results}
\end{table}

We see that there are several instances where the Inertia-type bound performs better than both the Plotkin-type bound and the Hamming-type bound. In all of these instances the Inertia-type bound also performs as good as the Lovász theta number and is sharp. Moreover, there are many other instances where the Inertia-type bound is also sharp. 


\subsection{Projective metric}

The projective metric, introduced in \cite{Gabidulin1997MetricsSet} and recently investigated in \cite{SR2025}, is a general metric that depends on a specific choice of set. Many well-known metrics are instances of this metric for the right choice of set, like the Hamming metric and the sum-rank metric. 

\begin{definition} \label{def:projective}
Let $\mathcal{F}=\{F_1, \ldots, F_m\}$ be a set of one-dimensional subspaces of $\Fq^n$ such that $\text{span} \left( \bigcup_{i=1}^m F_i \right) = \Fq^n$. The \textit{projective $\mathcal{F}$-weight} of $\textbf{x} \in \Fq^n$ is defined as
\[
w_\mathcal{F}(\textbf{x}) := \min \left\{|I|: \textbf{x} \in \text{span} \left( \bigcup_{i \in I} F_i \right)\right\}.
\]
The \textit{projective $\mathcal{F}$-distance} between $\textbf{x},\textbf{y} \in \Fq^n$ is defined as $d_\mathcal{F}(\textbf{x},\textbf{y}) := w_\mathcal{F}(\textbf{x}-\textbf{y})$.
\end{definition}

From here on let $\mathcal{F}$ be such a set as in Definition \ref{def:projective}. We consider the discrete metric space $(\Fq^n, d_\mathcal{F})$ and apply the Eigenvalue Method to it. 
Define the \textit{projective $\mathcal{F}$-distance graph} $G_\mathcal{F}(\Fq^n)$ as the graph with vertex set $\Fq^n$ where vertices $\textbf{x},\textbf{y} \in \Fq^n$ are adjacent if $d_\mathcal{F}(\textbf{x},\textbf{y})=1$. We need to verify first if condition \ref{item: C1} is satisfied.

\begin{lemma} \label{lemma: proj geodesic dist}
The geodesic distance in $G_\mathcal{F}(\Fq^n)$ coincides with the projective $\mathcal{F}$-distance.
\end{lemma}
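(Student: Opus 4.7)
The plan is to establish the two inequalities $d_{G_\mathcal{F}(\Fq^n)}(\textbf{x},\textbf{y}) \leq d_\mathcal{F}(\textbf{x},\textbf{y})$ and $d_\mathcal{F}(\textbf{x},\textbf{y}) \leq d_{G_\mathcal{F}(\Fq^n)}(\textbf{x},\textbf{y})$ separately, mirroring the strategy used for the city block metric above.

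For the first inequality, I would argue by explicit path construction. Let $k := d_\mathcal{F}(\textbf{x},\textbf{y}) = w_\mathcal{F}(\textbf{x}-\textbf{y})$. By Definition~\ref{def:projective}, there exists an index set $I = \{i_1, \ldots, i_k\}$ such that $\textbf{x} - \textbf{y} \in \text{span}\bigl( \bigcup_{j=1}^k F_{i_j} \bigr)$. Since each $F_{i_j}$ is one-dimensional, we can write $\textbf{x} - \textbf{y} = \textbf{v}_1 + \cdots + \textbf{v}_k$ with $\textbf{v}_j \in F_{i_j}$. Define the sequence $\textbf{z}_0 := \textbf{x}$ and $\textbf{z}_j := \textbf{x} - (\textbf{v}_1 + \cdots + \textbf{v}_j)$ for $j = 1, \ldots, k$, so that $\textbf{z}_k = \textbf{y}$. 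For each $j$, we have $\textbf{z}_{j-1} - \textbf{z}_j = \textbf{v}_j \in F_{i_j}$, so $w_\mathcal{F}(\textbf{z}_{j-1} - \textbf{z}_j) \leq 1$. After discarding the (at most $k$) indices where $\textbf{v}_j = \textbf{0}$ (so that $\textbf{z}_{j-1} = \textbf{z}_j$), the remaining consecutive pairs are at projective $\mathcal{F}$-distance exactly $1$ and hence adjacent in $G_\mathcal{F}(\Fq^n)$. This yields a walk from $\textbf{x}$ to $\textbf{y}$ of length at most $k$, proving $d_{G_\mathcal{F}(\Fq^n)}(\textbf{x},\textbf{y}) \leq k$.

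For the reverse inequality, I would use the triangle inequality for $d_\mathcal{F}$ (which is a metric, as follows from the subadditivity of $w_\mathcal{F}$: if $\textbf{u} \in \text{span}(\cup_{i \in I} F_i)$ and $\textbf{v} \in \text{span}(\cup_{i \in J} F_i)$ with $|I| = w_\mathcal{F}(\textbf{u})$ and $|J| = w_\mathcal{F}(\textbf{v})$, then $\textbf{u} + \textbf{v} \in \text{span}(\cup_{i \in I \cup J} F_i)$, so $w_\mathcal{F}(\textbf{u}+\textbf{v}) \leq |I| + |J|$). Let $\textbf{z}_0 = \textbf{x}, \textbf{z}_1, \ldots, \textbf{z}_\ell = \textbf{y}$ be a shortest path in $G_\mathcal{F}(\Fq^n)$, so that $\ell = d_{G_\mathcal{F}(\Fq^n)}(\textbf{x},\textbf{y})$ and $d_\mathcal{F}(\textbf{z}_{j-1},\textbf{z}_j) = 1$ for all $j$. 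Iterating the triangle inequality gives
\[
d_\mathcal{F}(\textbf{x},\textbf{y}) \leq \sum_{j=1}^\ell d_\mathcal{F}(\textbf{z}_{j-1},\textbf{z}_j) = \ell = d_{G_\mathcal{F}(\Fq^n)}(\textbf{x},\textbf{y}),
\]
which completes the proof.

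The first inequality is the only step requiring genuine care; the main subtlety is that the decomposition $\textbf{x}-\textbf{y} = \textbf{v}_1 + \cdots + \textbf{v}_k$ may contain zero summands, which would correspond to self-loops that are excluded from the graph. Handling this by simply omitting zero summands only strengthens the bound, so it poses no real obstacle. The reverse inequality is essentially automatic once one observes that $d_\mathcal{F}$ satisfies the triangle inequality.
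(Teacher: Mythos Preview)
Your proof is correct and follows essentially the same approach as the paper: both construct an explicit path $\textbf{x}, \textbf{x}-\textbf{v}_1, \ldots, \textbf{y}$ from a decomposition of $\textbf{x}-\textbf{y}$ into one-dimensional pieces to get $d_{G_\mathcal{F}} \leq d_\mathcal{F}$, and both use the minimality in the definition of $w_\mathcal{F}$ for the reverse inequality (the paper states it directly as ``by minimality of the cardinality of $\{i_1,\ldots,i_d\}$'', while you phrase it via the triangle inequality, which is the same argument unfolded). Your extra care about zero summands is unnecessary---minimality of $k$ already forces each $\textbf{v}_j \neq \textbf{0}$---but it does no harm.
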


\begin{proof}
Let $\textbf{x},\textbf{y} \in \Fq^n$ with $d_\mathcal{F}(\textbf{x},\textbf{y})=d$. Then, by definition, there is a subset $\{i_1, \ldots,i_d\} \subseteq [m]$ of size $d$ such that 
\[
\textbf{x}-\textbf{y} \in \text{span} \left( \bigcup_{j=1}^d F_{i_j} \right).
\]
This means that for all $j \in \{1,\ldots, d\}$ there exists an $\textbf{f}_{i_j} \in F_{i_j}$ such that $\textbf{x}-\textbf{y} = \sum_{j=1}^d \textbf{f}_{i_j}$. Note that $(\textbf{x}, \textbf{x}-\textbf{f}_{i_1}, \textbf{x}-\textbf{f}_{i_1}-\textbf{f}_{i_2}, \ldots, \textbf{x}-\sum_{j=1}^d \textbf{f}_{i_j} = \textbf{y})$ is a path in $G_\mathcal{F}(\Fq^n)$ from $\textbf{x}$ to $\textbf{y}$ of length $d$. So the geodesic distance between $\textbf{x}$ and $\textbf{y}$ is at most $d$. By minimality of the cardinality of $\{i_1, \ldots, i_d\}$ the geodesic distance cannot be less than $d$. Hence the geodesic distance in $G_\mathcal{F}(\Fq^n)$ equals the projective $\mathcal{F}$-distance. 
\end{proof}

Since condition \ref{item: C1} is satisfied, the Eigenvalue Method is applicable. The next results show that the projective $\mathcal{F}$-distance graph has desired properties \ref{item: C2} and \ref{item: C3}.

\begin{lemma} \label{lemma: proj cayley}
The graph $G_\mathcal{F}(\Fq^n)$ is a Cayley graph over $\Fq^n$ with connecting set $S := \{\textbf{x} \in \Fq^n: w_\mathcal{F}(\textbf{x})=1\}$. 
\end{lemma}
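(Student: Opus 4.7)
The plan is a direct verification against the definition of Cayley graph given earlier in the paper. Recall that a Cayley graph over a group with connecting set $S$ requires (a) the vertex set to be the group, (b) adjacency $x \sim y$ iff $y = x+s$ for some $s \in S$, and the paper additionally imposes that $S$ is symmetric under inversion and avoids the identity (so the resulting graph is a well-defined undirected simple graph). I would therefore check these items one by one for $G_\mathcal{F}(\Fq^n)$ and $S := \{\textbf{x} \in \Fq^n: w_\mathcal{F}(\textbf{x}) = 1\}$.

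First, the vertex sets match trivially, since both the Cayley graph $\mathrm{Cay}(\Fq^n, S)$ and $G_\mathcal{F}(\Fq^n)$ have vertex set $\Fq^n$. For adjacency, I would translate directly: $\textbf{x}$ and $\textbf{y}$ are adjacent in $G_\mathcal{F}(\Fq^n)$ iff $d_\mathcal{F}(\textbf{x},\textbf{y}) = 1$, which by Definition~\ref{def:projective} means $w_\mathcal{F}(\textbf{y}-\textbf{x}) = 1$, i.e.\ $\textbf{y}-\textbf{x} \in S$. This is exactly the condition that there exist $s \in S$ with $\textbf{x}+s = \textbf{y}$.

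Next, I would check the two conditions on the connecting set. For the identity: $w_\mathcal{F}(\textbf{0}) = 0$ (taking $I = \emptyset$ in Definition~\ref{def:projective} gives $\textbf{0} \in \mathrm{span}(\emptyset)$), so $\textbf{0} \notin S$, ensuring no self-loops. For closure under inverses: if $\textbf{x} \in S$, then there is some $i$ with $\textbf{x} \in F_i$ (and $\textbf{x} \neq \textbf{0}$). Since $F_i$ is a one-dimensional subspace, it is closed under additive inverses, so $-\textbf{x} \in F_i$ and $-\textbf{x} \neq \textbf{0}$. Hence $w_\mathcal{F}(-\textbf{x}) = 1$ and $-\textbf{x} \in S$. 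This makes the Cayley graph undirected.

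There is no real obstacle here: the only substantive observation is that $w_\mathcal{F}$ is invariant under negation, which is immediate from the $F_i$ being subspaces. Everything else is bookkeeping against the two definitions. The lemma is useful mainly because it immediately yields properties \ref{item: C2} and \ref{item: C3} via the remark earlier that Cayley graphs are vertex-transitive and hence walk-regular.
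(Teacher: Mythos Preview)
Your proof is correct and follows essentially the same approach as the paper's: both verify directly that adjacency in $G_\mathcal{F}(\Fq^n)$ is equivalent to the Cayley-graph condition $\textbf{y}-\textbf{x}\in S$. Your version is in fact slightly more thorough, since you also explicitly check that $\textbf{0}\notin S$ and that $S$ is closed under negation, points the paper's proof leaves implicit.
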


\begin{proof}
Let $(\textbf{x},\textbf{y}) \in E(G_\mathcal{F}(\Fq^n))$. Then $w_\mathcal{F}(\textbf{x}-\textbf{y}) = d_\mathcal{F}(\textbf{x},\textbf{y})=1$, so $\textbf{x}-\textbf{y} \in S$ and $\textbf{x}=\textbf{y}+\textbf{s}$ for some $\textbf{s} \in S$. Now let $\textbf{x} \in \Fq^n$ and $\textbf{s} \in S$. Then $d_\mathcal{F}(\textbf{x},\textbf{x}+\textbf{s}) = w_\mathcal{F}(\textbf{s})=1$ since $\textbf{s} \in S$. So $\textbf{x}$ and $\textbf{x}+\textbf{s}$ are adjacent in $G_\mathcal{F}(\Fq^n)$.
\end{proof}

\begin{corollary} \label{cor: proj reg walk-r vertex}
The graph $G_\mathcal{F}(\Fq^n)$ is vertex-transitive, regular, and walk-regular.
\end{corollary}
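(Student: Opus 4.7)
The plan is to derive the corollary as a direct chain of implications from Lemma~\ref{lemma: proj cayley} together with the three standard facts already recorded in Section~\ref{sec:preliminaries}: (a) every Cayley graph (with our convention on the connecting set) is vertex-transitive, (b) vertex-transitive graphs are regular, and (c) vertex-transitive graphs are walk-regular. So the proof is essentially a one-line invocation, with only a brief preliminary check to confirm that the Cayley graph hypothesis is correctly set up.

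First, I would verify that the connecting set $S = \{\textbf{x} \in \Fq^n : w_\mathcal{F}(\textbf{x})=1\}$ from Lemma~\ref{lemma: proj cayley} satisfies the two conventions adopted in the preliminaries for Cayley graphs, namely $\mathbf{0} \notin S$ and $S = -S$. The first is immediate: $w_\mathcal{F}(\mathbf{0})=0$, so $\mathbf{0} \notin S$. The second follows because, for any subset $I \subseteq [m]$, the set $\mathrm{span}\bigl(\bigcup_{i \in I} F_i\bigr)$ is a vector subspace of $\Fq^n$, and hence $\textbf{x}$ lies in it if and only if $-\textbf{x}$ does; so $w_\mathcal{F}(-\textbf{x}) = w_\mathcal{F}(\textbf{x})$ for every $\textbf{x} \in \Fq^n$, which gives $-S = S$.

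Once this sanity check is in place, the three conclusions follow in order. Vertex-transitivity is obtained directly from Lemma~\ref{lemma: proj cayley} via fact (a). Regularity is then an immediate consequence of (b). Walk-regularity follows from (c). There is no real obstacle here; the entire content of the corollary is packaged into Lemma~\ref{lemma: proj cayley}, and the corollary just records the three standard structural consequences of being a Cayley graph that will be needed when applying the Eigenvalue Method to the projective $\mathcal{F}$-metric.
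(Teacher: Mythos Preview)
Your proposal is correct and follows essentially the same route as the paper: both invoke Lemma~\ref{lemma: proj cayley} to get a Cayley graph, then use the chain Cayley $\Rightarrow$ vertex-transitive $\Rightarrow$ regular and walk-regular, exactly as recorded in the preliminaries. Your added verification that $\mathbf{0}\notin S$ and $S=-S$ is a harmless (and arguably welcome) sanity check that the paper omits.
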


\begin{proof}
Since Cayley graphs are vertex-transitive, the graph $G_\mathcal{F}(\Fq^n)$ is vertex-transitive. This immediately implies that $G_\mathcal{F}(\Fq^n)$ is regular and walk-regular.
\end{proof}

The last property to check is \ref{item: C4}. However, distance-regularity of the graph $G_\mathcal{F}(\Fq^n)$ depends on the choice of set $\mathcal{F}$. The set $\mathcal{F}_\text{H} = \{F_1, \ldots, F_n\}$ with $F_i = \text{span}(\textbf{e}_i)$ for $i=1, \ldots, n$, for instance, results in the Hamming distance, and the corresponding graph $G_{\mathcal{F}_\text{H}}(\Fq^n)$ is the Hamming graph, which is known to be distance-regular (see e.g.  \cite{Brouwer1989Distance-RegularGraphs}). On the other hand, the sum-rank metric, which we elaborate on below, results in a graph that is, in most instances, not distance-regular \cite[Proposition 12]{Abiad2023EigenvalueCodes}.

The sum-rank metric is an example of a projective metric. 
\begin{definition}\label{def:srk}
Let $t$ be a positive integer and let $\textbf{n}=(n_1,\ldots,n_t)$, $\textbf{m} = (m_1,\ldots,m_t)$ be ordered tuples of positive integers with $m_1 \geq m_2 \geq \cdots \geq m_t$, and $m_i\geq n_i$ for all $i\in [t]$. 
The \textit{sum-rank-metric space} is an $\mathbb{F}_q$-linear vector space $\mathbb{F}_q^{{\bf n}\times {\bf m}}$ defined as follows:
\[
\mathbb{F}_q^{{\bf n}\times {\bf m}}:=
\mathbb{F}_q^{n_1\times m_1}\times \cdots \times
\mathbb{F}_q^{n_t\times m_t}.
\]
The \textit{sum-rank} of an element $X=(X_1,\ldots, X_t)\in \mathbb{F}_q^{{\bf n}\times {\bf m}}$ is $\srk(X) := \sum_{i=1}^t \rk(X_i)$, where $\rk(X_i)$ denotes the rank of matrix $X_i$. The \textit{sum-rank distance} between $X,Y \in \mathbb{F}_q^{{\bf n}\times {\bf m}}$ is $\srk(X - Y)$.  
\end{definition}
Note that the sum-rank metric is indeed an instance of the projective metric: take the set $\mathcal{F}_\text{srk}$ containing all possible spans of a tuple of matrices, all equal to the zero matrix except for one which is a rank-one matrix. The sum-rank metric has been studied in the context of the Eigenvalue Method in \cite{Abiad2023EigenvalueCodes}. Abiad et al. establish various properties of the sum-rank-metric graph, which is the graph with vertex set $\Fq^{\textbf{n} \times \textbf{m}}$ where two vertices are adjacent if their sum-rank distance equals 1. Besides, new bounds on the maximum cardinality of sum-rank-metric codes are derived using the Ratio-type bound. These new bounds improve on the state of the art bounds for several choices of the parameters.

For specific choices of set $\mathcal{F}$, we want to be able to compare the results of the Eigenvalue Method to state of the art bounds. Depending on the set $\mathcal{F}$, bounds may exist for the specific metric arising in that case, like for the sum-rank metric. However, a bound for general codes in the projective metric also exists, namely a Singleton-type bound.
\begin{theorem}[\text{Singleton-type bound, \cite[Theorem 83]{SR2025}}] \label{th: singleton projective}
Let $\mathcal{C} \subseteq \Fq^n$ be a code of minimum projective $\mathcal{F}$-distance $d$. For $t \in \{0,\ldots, n\}$ define $\mu_\mathcal{F}(t)$ as the maximum cardinality of a subset $\mathcal{G} \subseteq \mathcal{F}$ such that:
\begin{itemize}
\item all $\textbf{f}_i \in \mathcal{G}$ are linearly independent over $\Fq$,
\item all $\textbf{v} \in \langle \mathcal{G} \rangle$ have $w_\mathcal{F}(\textbf{v}) \leq t$.
\end{itemize}
Then 
\[
|\mathcal{C}| \leq q^{n-\mu_\mathcal{F}(d-1)} \leq q^{n-d+1}.
\]
\end{theorem}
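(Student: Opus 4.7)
The plan is to mimic the classical proof of the Singleton bound: find a subspace $U \subseteq \Fq^n$ of the right dimension such that the quotient map $\pi : \Fq^n \to \Fq^n / U$ restricts to an injection on $\mathcal{C}$. The cardinality bound on $\mathcal{C}$ is then immediate, and the trick is to pick $U$ using the structure that defines $\mu_\mathcal{F}(d-1)$.

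First I would fix a subset $\mathcal{G} \subseteq \mathcal{F}$ attaining the maximum $\mu_\mathcal{F}(d-1)$, and set $U := \langle \mathcal{G} \rangle$, the $\Fq$-span of the one-dimensional subspaces in $\mathcal{G}$. The linear-independence requirement in the definition of $\mu_\mathcal{F}(t)$ forces $\dim_{\Fq} U = |\mathcal{G}| = \mu_\mathcal{F}(d-1)$, so $|\Fq^n/U| = q^{n - \mu_\mathcal{F}(d-1)}$. The key property of $U$ that will be used is the second bullet in the definition of $\mu_\mathcal{F}$: every $\textbf{v} \in U$ satisfies $w_\mathcal{F}(\textbf{v}) \leq d-1$.

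Next I would argue that the quotient map $\pi : \Fq^n \to \Fq^n / U$ is injective on $\mathcal{C}$. If $\textbf{c}_1, \textbf{c}_2 \in \mathcal{C}$ are distinct with $\pi(\textbf{c}_1) = \pi(\textbf{c}_2)$, then $\textbf{c}_1 - \textbf{c}_2 \in U$, so $d_\mathcal{F}(\textbf{c}_1, \textbf{c}_2) = w_\mathcal{F}(\textbf{c}_1 - \textbf{c}_2) \leq d-1$, contradicting the assumption that $\mathcal{C}$ has minimum $\mathcal{F}$-distance $d$. Thus
\[
|\mathcal{C}| \leq |\pi(\mathcal{C})| \leq |\Fq^n / U| = q^{n - \mu_\mathcal{F}(d-1)},
\]
which is the first inequality of the bound.

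For the second inequality I would just show $\mu_\mathcal{F}(d-1) \geq d-1$. Since $\mathcal{F}$ spans $\Fq^n$, one can extract $d-1$ one-dimensional subspaces $F_{i_1}, \ldots, F_{i_{d-1}}$ from $\mathcal{F}$ that are linearly independent (using $d-1 \leq n$, which follows from the general fact $w_\mathcal{F}(\textbf{x}) \leq n$). The set $\mathcal{G}_0 := \{F_{i_1}, \ldots, F_{i_{d-1}}\}$ is a valid candidate in the definition of $\mu_\mathcal{F}(d-1)$, because every $\textbf{v} \in \langle \mathcal{G}_0 \rangle$ automatically satisfies $w_\mathcal{F}(\textbf{v}) \leq |\mathcal{G}_0| = d-1$ by definition of $w_\mathcal{F}$. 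Hence $\mu_\mathcal{F}(d-1) \geq d-1$ and $q^{n - \mu_\mathcal{F}(d-1)} \leq q^{n-d+1}$. The only mildly delicate point is the interpretation of ``linearly independent'' for a set of one-dimensional subspaces; I would make this precise once at the start by fixing nonzero generators $\textbf{f}_i \in F_i$ and reading the condition as $\dim \langle \mathcal{G} \rangle = |\mathcal{G}|$, after which the argument above is essentially bookkeeping.
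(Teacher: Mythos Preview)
The paper does not actually prove this theorem; it is quoted from \cite[Theorem~83]{SR2025} without a proof, so there is no in-paper argument to compare against. Your proposal is correct and is precisely the expected Singleton-type projection argument: the subspace $U=\langle\mathcal{G}\rangle$ has dimension $\mu_\mathcal{F}(d-1)$ and meets $\{\textbf{c}_1-\textbf{c}_2:\textbf{c}_1,\textbf{c}_2\in\mathcal{C}\}$ only in $\textbf{0}$, so the quotient map $\Fq^n\to\Fq^n/U$ is injective on $\mathcal{C}$; the second inequality is immediate from the observation that any $d-1$ independent members of $\mathcal{F}$ (which exist since $d-1\le n$ and $\mathcal{F}$ spans $\Fq^n$) form an admissible $\mathcal{G}_0$ in the definition of $\mu_\mathcal{F}(d-1)$.
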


\subsection{Phase-rotation metric} \label{sec: phase rotation}
Another example of a projective metric is the phase-rotation metric. Note that although this is an instance of the previous metric, the phase-rotation metric is treated separately since we go into more depth with this metric.
 
The phase-rotation metric, which was introduced in \cite{Gabidulin1997CodesRotation}, is particularly suitable for decoding in a binary channel where errors are caused by phase inversions (where all zeros change to ones and all ones change to zeros), and random bit errors (where at random a zero changes to a one or a one changes to a zero). 

\begin{definition} \label{def:phase-rotation}
Let $\mathcal{F}_\text{pr}=\{F_1, \ldots, F_{n+1}\}$ be the set with $F_i = \text{span}(\textbf{e}_i)$ for $i=1, \ldots, n$ and $F_{n+1} = \text{span}(\textbf{1})$. The \textit{phase-rotation weight} $w_\text{pr}$ and the \textit{phase-rotation distance} $d_\text{pr}$ are defined as the projective $\mathcal{F}_\text{pr}$-weight and the projective $\mathcal{F}_\text{pr}$-distance from Definition \ref{def:projective}, respectively.
\end{definition}

\begin{example}
Consider the instance where $n=4, q=2$. Take $\textbf{x}=(0,0,0,0)$, $\textbf{y}=(1,0,0,1)$ and $\textbf{z}=(1,1,0,1)$, which are vectors in $\mathbb{F}_2^4$. Then the phase-rotation distance between $\textbf{x}$ and $\textbf{y}$ is 2 since the vectors differ in two coordinates and
\[
\textbf{x} - \textbf{y} = (0,0,0,0)-(1,0,0,1) = (1,0,0,1) = \textbf{e}_1 + \textbf{e}_4.
\]
The phase-rotation distance between $\textbf{x}$ and $\textbf{z}$ is also 2, even though the vectors differ in three coordinates, because 
\[
\textbf{x} - \textbf{z} = (0,0,0,0)-(1,1,0,1)=(1,1,0,1) = (1,1,1,1)+(0,0,1,0) = \textbf{1} + \textbf{e}_3.
\]
\end{example}

Now we apply the Eigenvalue Method to the discrete metric space $(\Fq^n, d_\text{pr})$ with the phase-rotation distance. Define the \textit{phase-rotation distance graph} $G_\text{pr}(\Fq^n)$ as the graph with vertex set $\Fq^n$ where vertices $\textbf{x},\textbf{y} \in \Fq^n$ are adjacent if $d_\text{pr}(\textbf{x},\textbf{y})=1$. Note that this is exactly the projective $\mathcal{F}$-distance graph for the specific set $\mathcal{F}=\mathcal{F}_\text{pr}$. Next we check the conditions of the Eigenvalue Method. Since the phase-rotation distance graph equals the projective $\mathcal{F}$-distance graph $G_\mathcal{F}(\Fq^n)$ for $\mathcal{F} = \mathcal{F}_\text{pr}$, condition \ref{item: C1} and properties \ref{item: C2} and \ref{item: C3} follow immediately.

\begin{corollary}
The geodesic distance in $G_\text{pr}(\Fq^n)$ coincides with the phase-rotation distance.
\end{corollary}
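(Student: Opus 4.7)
The plan is to derive this corollary as a direct specialization of the earlier Lemma \ref{lemma: proj geodesic dist}, which established the analogous statement for the general projective $\mathcal{F}$-distance graph. First I would observe that, by Definition \ref{def:phase-rotation}, the phase-rotation weight and distance are literally \emph{defined} as the projective $\mathcal{F}$-weight and distance for the specific set $\mathcal{F} = \mathcal{F}_\text{pr} = \{\text{span}(\textbf{e}_1), \ldots, \text{span}(\textbf{e}_n), \text{span}(\textbf{1})\}$. Consequently, the adjacency relation defining $G_\text{pr}(\Fq^n)$ (two vertices are adjacent iff $d_\text{pr}(\textbf{x},\textbf{y})=1$) coincides exactly with the adjacency relation of $G_{\mathcal{F}_\text{pr}}(\Fq^n)$, so the two graphs are identical. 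Geodesic distance is a purely graph-theoretic notion, so in particular $d_{G_\text{pr}(\Fq^n)} = d_{G_{\mathcal{F}_\text{pr}}(\Fq^n)}$.

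Before invoking Lemma \ref{lemma: proj geodesic dist}, I need to verify that $\mathcal{F}_\text{pr}$ satisfies the hypothesis of Definition \ref{def:projective}, namely that $\text{span}\!\left(\bigcup_{i=1}^{n+1} F_i\right) = \Fq^n$. This is immediate, since already the first $n$ summands $F_1, \ldots, F_n$ are the coordinate lines spanned by $\textbf{e}_1, \ldots, \textbf{e}_n$, so their union spans all of $\Fq^n$ (the extra line $\text{span}(\textbf{1})$ is harmless). Hence $\mathcal{F}_\text{pr}$ is a legitimate choice in the projective framework and the general theory applies.

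With these observations in hand, the corollary follows by a one-line appeal: apply Lemma \ref{lemma: proj geodesic dist} with $\mathcal{F} = \mathcal{F}_\text{pr}$ to conclude that the geodesic distance in $G_{\mathcal{F}_\text{pr}}(\Fq^n) = G_\text{pr}(\Fq^n)$ coincides with $d_{\mathcal{F}_\text{pr}} = d_\text{pr}$. There is essentially no obstacle here, as the work was already done in the general projective setting; the only thing to be careful about is the bookkeeping that the set $\mathcal{F}_\text{pr}$ meets the spanning requirement of Definition \ref{def:projective}, which I would make explicit in a single sentence at the start of the proof.
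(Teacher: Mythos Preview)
Your proposal is correct and follows essentially the same approach as the paper: both observe that $d_\text{pr}$ and $G_\text{pr}(\Fq^n)$ are, by definition, the projective $\mathcal{F}$-distance and graph for $\mathcal{F}=\mathcal{F}_\text{pr}$, and then invoke Lemma~\ref{lemma: proj geodesic dist}. Your extra sentence checking that $\mathcal{F}_\text{pr}$ satisfies the spanning hypothesis of Definition~\ref{def:projective} is a small but welcome addition that the paper leaves implicit.
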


\begin{proof}
Since the phase-rotation distance is a projective distance for the specific set $\mathcal{F}_\text{pr}$ of Definition \ref{def:phase-rotation} and the phase-rotation distance graph $G_\text{pr}(\Fq^n)$ is defined accordingly, Lemma \ref{lemma: proj geodesic dist} directly implies that the geodesic distance in $G_\text{pr}(\Fq^n)$ coincides with the phase-rotation distance.
\end{proof}

\begin{corollary}
The graph $G_\text{pr}(\Fq^n)$ is a Cayley graph. Thus $G_\text{pr}(\Fq^n)$ is vertex-transitive, regular, and walk-regular. The degree of $G_\text{pr}(\Fq^n)$ is $q-1$ if $n=1$ and $(q-1)(n+1)$ if $n \geq 2$.
\end{corollary}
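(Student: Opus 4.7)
The plan is to note that the first two sentences follow essentially for free from the general results already proved about projective distance graphs, and that the substantive content is only the degree count. Since $\mathcal{F}_\text{pr}$ from Definition \ref{def:phase-rotation} is a special case of the set $\mathcal{F}$ in Definition \ref{def:projective} (it spans $\F_q^n$ because it contains $\mathbf{e}_1,\ldots,\mathbf{e}_n$), the phase-rotation distance graph $G_\text{pr}(\F_q^n)$ is exactly $G_{\mathcal{F}_\text{pr}}(\F_q^n)$. Hence Lemma \ref{lemma: proj cayley} immediately gives that it is a Cayley graph over $\F_q^n$ with connecting set $S=\{\mathbf{x}\in\F_q^n : w_\text{pr}(\mathbf{x})=1\}$, and Corollary \ref{cor: proj reg walk-r vertex} yields vertex-transitivity, regularity and walk-regularity.

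For the degree, I would use that in any Cayley graph with connecting set $S$ the degree equals $|S|$, so the task reduces to counting the vectors of phase-rotation weight $1$. By definition of the projective weight, $w_\text{pr}(\mathbf{x})=1$ exactly when $\mathbf{x}\in F_i\setminus\{\mathbf{0}\}$ for some $i\in[n+1]$, i.e.
\[
S=\bigcup_{i=1}^{n+1}\bigl(F_i\setminus\{\mathbf{0}\}\bigr).
\]
Each $F_i$ is a one-dimensional $\F_q$-subspace, hence contributes $q-1$ nonzero vectors. The only remaining issue is inclusion–exclusion among the $F_i$.

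For $n=1$ I would observe that $\mathbf{e}_1=\mathbf{1}$, so $F_1=F_{n+1}$ and the union collapses to a single line, giving $|S|=q-1$. For $n\geq 2$, the subspaces $F_1,\ldots,F_n$ are pairwise distinct one-dimensional coordinate axes, so any two of them meet only in $\mathbf{0}$; moreover $F_{n+1}=\mathrm{span}(\mathbf{1})$ intersects each $F_i=\mathrm{span}(\mathbf{e}_i)$ trivially, because every nonzero multiple of $\mathbf{1}$ has at least two nonzero coordinates when $n\geq 2$ while every nonzero multiple of $\mathbf{e}_i$ has exactly one. Therefore the sets $F_i\setminus\{\mathbf{0}\}$ are pairwise disjoint and $|S|=(n+1)(q-1)$.

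There is no real obstacle here: the only thing requiring care is the case split $n=1$ versus $n\geq 2$, where the collapse $\mathbf{e}_1=\mathbf{1}$ must be recognized to avoid double-counting. Everything else is a direct invocation of Lemma \ref{lemma: proj cayley} and Corollary \ref{cor: proj reg walk-r vertex}.
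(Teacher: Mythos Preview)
Your proposal is correct and essentially mirrors the paper's own proof: both invoke Lemma~\ref{lemma: proj cayley} and Corollary~\ref{cor: proj reg walk-r vertex} for the structural claims, then compute the degree as the number of nonzero vectors in $\bigcup_{i=1}^{n+1}F_i$, splitting on $n=1$ (where $F_1=F_{n+1}$) versus $n\ge 2$ (where the $F_i\setminus\{\mathbf{0}\}$ are pairwise disjoint). Your justification for disjointness when $n\ge 2$ is slightly more detailed than the paper's, but the argument is the same.
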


\begin{proof}
The graph properties follow immediately from the properties of the projective distance graph in Lemma \ref{lemma: proj cayley} and Corollary \ref{cor: proj reg walk-r vertex}. The degree of $G_\text{pr}(\Fq^n)$ is exactly the number of distinct nonzero vectors in $\cup_{i=1}^{n+1} F_i$. If $n=1$, $F_1=F_{n+1}$ and $F_1$ contains $q-1$ nonzero vectors, so the degree is $q-1$. If $n \geq 2$, all $F_i$ are disjoint and every $F_i$ contains $q-1$ nonzero vectors, so the degree is $(q-1)(n+1)$. 
\end{proof}

So condition \ref{item: C1} and properties \ref{item: C2} and \ref{item: C3} are met. Next we check property \ref{item: C4}. Distance-regularity does not follow immediately like the other properties of $G_\text{pr}(\Fq^n)$, but the following result gives a necessary and sufficient condition for distance-regularity of $G_\text{pr}(\Fq^n)$. 

\begin{proposition}
The graph $G_\text{pr}(\Fq^n)$ is distance-regular if and only if $n=1$, $n=2$ or $q=2$.
\end{proposition}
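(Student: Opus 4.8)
The statement is a bidirectional characterization, so I would split it into proving that each of the three conditions $n=1$, $n=2$, or $q=2$ is sufficient for distance-regularity, and then proving the converse, i.e., that if $n\geq 3$ and $q\geq 3$ then $G_\text{pr}(\Fq^n)$ fails to be distance-regular. For the sufficiency direction, the case $n=1$ is immediate: here $F_1=F_{n+1}=\text{span}(\textbf{e}_1)=\Fq$, every nonzero vector has weight $1$, and the graph is complete on $q$ vertices, which is trivially distance-regular. For $n=2$, the diameter is small (every vector has $\mathcal{F}_\text{pr}$-weight at most $2$, since $\mathcal{F}_\text{pr}$ spans $\Fq^2$ with only three lines $F_1,F_2,F_3$), so I would argue distance-regularity directly by checking that the intersection numbers $c_i,a_i,b_i$ are well-defined; with diameter $2$ this reduces to a strongly-regular-type computation, which is feasible because vertex-transitivity (Corollary~\ref{cor: proj reg walk-r vertex}) already guarantees the $b_0$ and $c_1$ values and only the $a_1,c_2$ counts need verification. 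For $q=2$ with arbitrary $n$, the key observation is that over $\F_2$ the phase-rotation metric is highly symmetric: the automorphism group is large enough (coordinate permutations together with the phase-inversion map $\textbf{x}\mapsto\textbf{x}+\textbf{1}$) to make the graph distance-transitive, and distance-transitive graphs are always distance-regular.

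\textbf{The converse (main obstacle).} The hard part is showing that distance-regularity \emph{fails} when $n\geq 3$ and $q\geq 3$. The plan is to exhibit two pairs of vertices $(\textbf{x},\textbf{y})$ and $(\textbf{x}',\textbf{y}')$ at the same geodesic distance $i$ for which some intersection number $p^i_{j,h}(\textbf{x},\textbf{y})$ differs, thereby violating the definition of distance-regularity given in the preliminaries. Since the graph is vertex-transitive, by translation I may always take $\textbf{x}=\textbf{x}'=\textbf{0}$, so it suffices to find two vectors $\textbf{y},\textbf{y}'$ with $w_\text{pr}(\textbf{y})=w_\text{pr}(\textbf{y}')=i$ but with differing neighborhood-intersection counts. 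The natural invariant to track is $c_i(\textbf{0},\textbf{y})=p^i_{1,i-1}(\textbf{0},\textbf{y})$, the number of weight-one vectors $\textbf{s}$ (elements of the connecting set $S$) such that $w_\text{pr}(\textbf{y}-\textbf{s})=i-1$; equivalently, the number of neighbors of $\textbf{y}$ that lie one step closer to $\textbf{0}$. I expect the distinguishing phenomenon to hinge on whether the all-ones line $F_{n+1}$ participates in a minimal representation of $\textbf{y}$, and on the fact that for $q\geq 3$ the scalar multiples along each line give genuine multiplicity that the binary case collapses.

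\textbf{Constructing the witnesses.} Concretely, with $n\geq 3$ and $q\geq 3$ I would compare a weight-$2$ vector that uses two of the standard lines, say $\textbf{y}=\textbf{e}_1+\textbf{e}_2$, against a weight-$2$ vector whose minimal support must involve the all-ones line, for instance a vector of the form $\alpha\textbf{1}+\beta\textbf{e}_j$ that cannot be written using two standard basis lines alone (this is where $n\geq 3$ is needed, so that $\textbf{1}$ is genuinely ``far'' from every single $\textbf{e}_i$, and where $q\geq 3$ is needed to produce vectors not expressible as a bare coordinate sum). I would then count, for each, how many common neighbors they share with $\textbf{0}$, i.e. compute $p^2_{1,1}(\textbf{0},\textbf{y})$ in both cases, and show the two counts disagree. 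The main technical effort is the careful enumeration of weight-one representations modulo the overlap created by $F_{n+1}$: I must verify that a weight-one vector $\textbf{s}$ lying on line $F_i$ satisfies $w_\text{pr}(\textbf{y}-\textbf{s})=1$ in a number of ways that is sensitive to whether $\textbf{y}$'s minimal cover uses $F_{n+1}$. I expect this counting to be the genuinely delicate step, since the presence of the extra line $F_{n+1}$ creates ``accidental'' weight-one decompositions (as illustrated by the earlier example where $(1,1,0,1)=\textbf{1}+\textbf{e}_3$) that behave irregularly across vertices once $q\geq 3$ allows nontrivial scalars; once a single intersection number is shown to vary, distance-regularity is contradicted and the proof is complete.
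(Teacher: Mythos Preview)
Your overall architecture is sound, but the converse argument has a genuine obstruction you have not anticipated. You propose to detect the failure of distance-regularity at distance $2$ by showing that $c_2=p^2_{1,1}(\textbf{0},\textbf{y})$ differs for $\textbf{y}=\textbf{e}_1+\textbf{e}_2$ versus $\textbf{y}'=\alpha\textbf{1}+\beta\textbf{e}_j$. This works for $n=3$, but for $n\geq 4$ and $q\geq 3$ \emph{every} weight-$2$ vector has $c_2=2$. The reason is that the $n+1$ lines $F_1,\ldots,F_{n+1}$ are in general linear position (any $n$ of them are independent), so for $n\geq 4$ any three or four of them are independent, which forces the decomposition $\textbf{y}=\textbf{s}_1+\textbf{s}_2$ into weight-one summands to be \emph{unique} as an unordered pair. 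Hence $c_2$ cannot distinguish any two weight-$2$ vertices once $n\geq 4$, and your proposed computation returns the same value on both sides. The paper avoids this by working not at distance $2$ but at distance $r=\lceil n/2\rceil$: it compares $\textbf{x}=(1,\ldots,1,a,0,\ldots,0)$ (with $r-1$ ones and one further entry $a\notin\{0,1\}$) against $\textbf{y}=(1,\ldots,1,0,\ldots,0)$ (with $r$ ones), exploiting that $\textbf{y}$ admits an alternative short representation through $\textbf{1}$ while $\textbf{x}$ does not, and shows that $c_r$ (for $n$ odd) or $a_r$ (for $n$ even) differs between the two. Your distance-$2$ witnesses can in fact be salvaged by computing $a_2$ or $b_2$ instead of $c_2$ --- for instance one checks directly that $a_2(\textbf{0},\textbf{e}_1+\textbf{e}_2)\neq a_2(\textbf{0},\textbf{1}+\textbf{e}_1)$ already for $n=4$, $q=3$ --- but that is not the computation you planned, and proving it for all $n\ge 4$ still requires a careful case analysis.

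A smaller issue concerns the case $q=2$. The automorphisms you list (coordinate permutations and the translation $\textbf{x}\mapsto\textbf{x}+\textbf{1}$) do not by themselves yield distance-transitivity: the stabilizer of $\textbf{0}$ coming from that list is only $S_n$, which does not identify vectors of Hamming weight $i$ with those of Hamming weight $n+1-i$ (both have phase-rotation weight $i$). One needs the full $S_{n+1}$ acting linearly on the lines $F_1,\ldots,F_{n+1}$, or --- as the paper does --- simply observe that $G_\text{pr}(\F_2^n)$ is the folded $(n+1)$-cube, which is classically known to be distance-regular.
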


\begin{proof}
($\Leftarrow$) We prove the three cases, $n=1$, $n=2$ and $q=2$, separately. The case $n=1$ follows immediately: the graph for $n=1$ is a complete graph on $q$ vertices, which is distance-regular. 

When $q=2$, the phase-rotation distance graph equals the folded cube graph of dimension $n+1$, which is a distance-regular graph \cite[Section 9.2D]{Brouwer1989Distance-RegularGraphs}.

Consider the case $n=2$, $q \geq 3$. The diameter of $G_\text{pr}(\Fq^2)$ then equals 2. Observe that it suffices to show that for vertices $\textbf{u}$ and $\textbf{v}$ with $d(\textbf{u},\textbf{v})=i$ the values $b_i(\textbf{u},\textbf{v})$ and $c_i(\textbf{u},\textbf{v})$, the number of neighbors of $\textbf{u}$ at distance $i+1$, $i-1$ from $\textbf{v}$ respectively, do not depend on the choice of $\textbf{u}$ and $\textbf{v}$ for $i=0,1,2$. Since the phase-rotation distance graph is vertex-transitive, we may assume without loss of generality that $\textbf{v}=\textbf{0}$. Since $G_\text{pr}(\Fq^2)$ is $3(q-1)$-regular, we have $b_0 = 3(q-1)$ which does not depend on the choice of $\textbf{u}$. Also $c_0=0$, $c_1=1$, and $b_2=0$ by definition. 
Now consider $b_1(\textbf{u},\textbf{v})$. A vertex $\textbf{u}$ at distance $1$ from $\textbf{v}=\textbf{0}$ has the form $(a,0)$, $(0,a)$ or $(a,a)$ with $a \in \Fq^*$. The neighbors of $\textbf{v}$ at distance $2$ from $(a,0)$, $(0,a)$, $(a,a)$ with $a \in \Fq^*$ are
\[
\left\{(0,x), (y,y): x,y \in \Fq^*, x \neq -a,y \neq a \right\}, \quad \left\{(x,0), (y,y): x,y \in \Fq^*, x \neq -a,y \neq a \right\},
\]
\[
\left\{(0,x), (y,0): x,y \in \Fq^*, x,y \neq a \right\},
\]
respectively. All these three sets contain $2q-4$ distinct vertices so $b_1 = 2q-4$, which is independent of the choice of $\textbf{u}$. 
Now consider $c_2(\textbf{u},\textbf{v})$. A vertex $\textbf{u}$ at distance $2$ from $\textbf{v}=\textbf{0}$ has the form $(a,b)$ with $a,b \in \Fq^*, a \neq b$. The neighbors of $\textbf{v}$ that are also neighbors $(a,b)$ with $a,b \in \Fq^*, a \neq b$ are the vertices
\[
\{(0,b), (a,0), (a,a), (b,b), (0, b-a), (a-b,0)\}.
\]
These are six distinct vertices, independent of the choice of $a,b \in \Fq^*$ such that $a \neq b$, so independent of the choice of $\textbf{u}$, and hence $c_2=6$. This proves that $G_\text{pr}(\Fq^2)$ for $q \geq 3$ is distance-regular.

($\Rightarrow$) Now we show that $n=1$, $n=2$ and $q=2$ are the only cases where the phase-rotation distance graph is distance-regular. Let $n \geq 3$ and $q \geq 3$. 
Define $r := \lceil \frac{n}{2} \rceil$. Let 
\[
\textbf{x} := (\underbrace{1,\ldots, 1}_{r-1 \text{ times}}, a, 0, \ldots,0), \qquad \textbf{y} := (\underbrace{1, \ldots, 1}_{r \text{ times}},0, \ldots, 0),
\]
for some fixed $a \in \Fq^*, a \neq 1$. Note that $d_\text{pr}(\textbf{x},\textbf{0}) = d_\text{pr}(\textbf{y},\textbf{0}) = r$. 

When $n$ is odd, consider $c_r$. For $\textbf{x}$ and the zero vector we find
\[
c_r(\textbf{x},\textbf{0}) = p_{1, r-1}^r(\textbf{x},\textbf{0}) = r,
\]
since the only neighbors of $\textbf{x}$ at distance $r-1$ from the zero vector are the vectors where one of the nonzero coordinates of $\textbf{x}$ is replaced with a zero. For $\textbf{y}$ and the zero vector we find
\[
c_r(\textbf{y},\textbf{0}) = p_{1, r-1}^r(\textbf{y},\textbf{0}) \geq r+1,
\]
since $\textbf{y}$ has $r$ neighbors at distance $r-1$ from the zero vector similarly as $\textbf{x}$ and the zero vector, but $\textbf{y}$ also has the vector starting with $r+1$ ones and then $r-2$ zeros, which is at distance $r-1$ from the zero vector, as a neighbor. So the number $c_r$ for $n$ odd depends on the choice of vertices. 

When $n$ is even we consider $a_r$. For $\textbf{x}$ and the zero vector we get:
\[
a_r(\textbf{x},\textbf{0}) = p_{1,r}^r(\textbf{x},\textbf{0}) = (q-2)r,
\]
since the only neighbors of $\textbf{x}$ at distance $r$ from the zero vector are the vectors where one of the $r$ nonzero coordinates of $\textbf{x}$ is replaced by another nonzero element of $\Fq$. For $\textbf{y}$ and the zero vector we have
\[
a_r(\textbf{y},\textbf{0})=p_{1,r}^r(\textbf{y},\textbf{0}) \geq (q-2)r+1, 
\]
since $\textbf{y}$ has $(q-2)r$ neighbors at distance $r$ from the zero vector similarly as $\textbf{x}$ and the zero vector, but $\textbf{y}$ also has the vector starting with $r+1$ ones and then $r-1$ zeros, which is at distance $r$ from the zero vector, as neighbor.
So the value of $a_r$ depends on the choice of vertices for $n$ even. Hence $G_\text{pr}(\Fq^n)$ is not distance-regular when $n \geq 3, q \geq 3$, which proves the result.
\end{proof}

The latter result shows that property \ref{item: C4} is met when $n \geq 3$ and $q \geq 3$. Moreover, since the graph $G_\text{pr}(\Fq^n)$ has the desired properties \ref{item: C2} and \ref{item: C3}, both the Inertia-type bound and the Ratio-type bound can be applied to this graph. To do so, we first need to determine the adjacency eigenvalues of the phase-rotation distance graph. 

\begin{proposition} \label{prop: eigval pr graph}
The adjacency eigenvalues of $G_\text{pr}(\Fq^n)$ for $n \geq 2$ are 
\[
\lambda_\textbf{r} = \left(\sum_{l=1}^n q \mathbbm{1} \{r_l = 0\}\right) + q \mathbbm{1} \left\{\sum_{l=1}^n r_l \equiv 0 \mod q \right\} - n - 1
\]
for every tuple $\textbf{r}= (r_1, \ldots, r_n) \in [\![q-1]\!]^n$.
\end{proposition}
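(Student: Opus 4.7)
The plan is to exploit the Cayley graph structure of $G_\text{pr}(\Fq^n)$ established in Lemma \ref{lemma: proj cayley}, and then diagonalize the adjacency matrix using the characters of the abelian group $(\Fq^n,+)$. Concretely, $G_\text{pr}(\Fq^n)$ is the Cayley graph on $\Fq^n$ with connecting set $S = \bigcup_{i=1}^{n+1}(F_i \setminus \{\textbf{0}\})$, and for $n \geq 2$ the lines $F_1,\ldots, F_n, F_{n+1}$ are $n+1$ pairwise distinct one-dimensional subspaces, so this union is disjoint. This disjointness is exactly why the hypothesis $n \geq 2$ is needed: for $n=1$ one has $F_1 = F_{n+1}$ and the formula would have to be corrected by a factor of two.

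The next step is the standard fact that for a Cayley graph on a finite abelian group $H$ the characters of $H$ form a simultaneous eigenbasis of the adjacency matrix, with eigenvalue $\lambda_\chi = \sum_{s \in S} \chi(s)$. Fixing a nontrivial additive character $\psi$ of $\Fq$ and identifying each $\textbf{r} \in [\![q-1]\!]^n$ with an element of $\Fq^n$, the $q^n$ characters of $\Fq^n$ are $\chi_\textbf{r}(\textbf{x}) := \psi(r_1 x_1 + \cdots + r_n x_n)$. Under this identification, the condition $\sum_{l=1}^n r_l \equiv 0 \bmod q$ in the statement reads as $\sum_{l=1}^n r_l = 0$ in $\Fq$.

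The computation then splits along the disjoint decomposition of $S$ as
\[
\lambda_\textbf{r} \;=\; \sum_{i=1}^{n} \sum_{c \in \Fq^*} \psi(c\, r_i) \;+\; \sum_{c \in \Fq^*} \psi\!\left(c\,\textstyle\sum_{l=1}^n r_l\right),
\]
since $\chi_\textbf{r}(c\,\textbf{e}_i) = \psi(c r_i)$ and $\chi_\textbf{r}(c\,\textbf{1}) = \psi(c \sum_l r_l)$. The only elementary identity needed is
\[
\sum_{c \in \Fq^*} \psi(c a) \;=\; q\,\mathbbm{1}\{a=0\} - 1,
\]
obtained by subtracting the $c=0$ term from the orthogonality relation $\sum_{c \in \Fq}\psi(ca) = q\,\mathbbm{1}\{a=0\}$. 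Applying this identity term by term gives a contribution $q\,\mathbbm{1}\{r_i=0\} - 1$ for each $i \in \{1,\ldots,n\}$ and $q\,\mathbbm{1}\{\sum_l r_l = 0\} - 1$ from the $F_{n+1}$ summand; collecting the $n+1$ constants $-1$ into $-n-1$ yields the announced formula.

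There is no real obstacle here: once the disjoint Cayley decomposition of $S$ is recognized, the proof reduces to a one-shot character-sum computation, and the only genuine subtlety is the disjointness check of the lines $F_i \setminus \{\textbf{0}\}$ that forces the hypothesis $n \geq 2$.
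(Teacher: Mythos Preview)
Your proof is correct and follows essentially the same approach as the paper: exploit the Cayley structure from Lemma~\ref{lemma: proj cayley}, apply the character formula for Cayley-graph eigenvalues, decompose the connecting set along the $n+1$ lines $F_1,\ldots,F_{n+1}$ (disjoint for $n\ge 2$), and evaluate each piece via additive-character orthogonality. Your execution is somewhat more streamlined than the paper's, since you work directly with a fixed nontrivial additive character $\psi$ of $\Fq$ rather than first decomposing $\Fq\cong(\mathbb{Z}/p\mathbb{Z})^k$ and building characters from $p$-th roots of unity, which lets you bypass the final base-$p$ translation step in the paper's argument.
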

Note that $\mathbbm{1}$ denotes the indicator function.

\begin{remark}
The latter result does not give the eigenvalues of $G_\text{pr}(\Fq^n)$ for $n=1$. However, in this case, the phase-rotation distance graph is equivalent to the complete graph on $q$ vertices $K_q$. The adjacency eigenvalues of $G_\text{pr}(\Fq)$ are thus $q-1$ and $-1$ with respective multiplicities $1$ and $q-1$ (see for instance \cite{Cvetkovic2009ApplicationsLiterature}).
\end{remark}

For the proof of Proposition \ref{prop: eigval pr graph} we use characters of groups. So we first present some background on characters and how they can be used to determine the eigenvalues of a Cayley graph. For more details about the latter, we refer the reader to \cite{lovasz1975spectra}.

\begin{definition}
Let $G$ be a group. A function $\chi:G \mapsto \mathbb{C}$ is a \textit{character} of $G$ if $\chi$ is a group homomorphism from $G$ to $\mathbb{C} \backslash \{0\}$ and $|\chi(g)| = 1$ for every $g \in G$.
\end{definition}

\begin{example} \label{ex: characters Z/mZ}
The characters of $\mathbb{Z}/m \mathbb{Z}$ are $\chi_r(x) := (\zeta_m)^{r x}$ for $r=0,\ldots, m-1$, where $\zeta_m:= \exp(\tfrac{2 \pi i}{m})$ denotes the $m$-th root of unity \cite{lovasz1975spectra}.
\end{example}

Example \ref{ex: characters Z/mZ} gives the characters of cyclic groups. Note that any finite abelian group $G$ is isomorphic to a product of cyclic groups, i.e. $G \cong \mathbb{Z}/m_1 \mathbb{Z} \times \cdots \times \mathbb{Z}/m_l \mathbb{Z}$. It turns out that the characters of a Cartesian product of groups are related to the characters of the individual groups. 

\begin{lemma}
\label{lem: characters cart prod}
Let $G,H$ be finite abelian groups with characters $\chi_{G,i}, \chi_{H,j}$ respectively. The characters of $G \times H$, the Cartesian product of $G$ and $H$, are $\chi_{i,j}((g,h)) := \chi_{G,i}(g) \cdot \chi_{H,j}(h)$. 
\end{lemma}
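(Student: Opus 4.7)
The plan is to verify the two defining properties of a character for each candidate $\chi_{i,j}$, and then argue that these exhaust all characters of $G \times H$ by a counting and injectivity argument.

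First, I would check that $\chi_{i,j}$ is indeed a character. For the homomorphism property, given $(g_1,h_1), (g_2,h_2) \in G \times H$, the computation
\begin{align*}
\chi_{i,j}((g_1,h_1)+(g_2,h_2)) &= \chi_{G,i}(g_1+g_2)\cdot \chi_{H,j}(h_1+h_2) \\
&= \chi_{G,i}(g_1)\chi_{G,i}(g_2)\chi_{H,j}(h_1)\chi_{H,j}(h_2) \\
&= \chi_{i,j}((g_1,h_1))\cdot \chi_{i,j}((g_2,h_2))
\end{align*}
uses only the homomorphism property of the factor characters and commutativity of $\mathbb{C}^*$. The modulus-one condition follows from $|\chi_{i,j}((g,h))| = |\chi_{G,i}(g)|\cdot |\chi_{H,j}(h)| = 1\cdot 1 = 1$, and $\chi_{i,j}((g,h))\neq 0$ because both factors are nonzero.

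Next I would argue the list $\{\chi_{i,j}\}$ is exhaustive. Since $G$ and $H$ are finite abelian, each has exactly $|G|$ and $|H|$ distinct characters respectively, and by standard character theory $G\times H$ has exactly $|G|\cdot |H|$ characters. So it suffices to check that the map $(i,j)\mapsto \chi_{i,j}$ is injective. Suppose $\chi_{i,j} = \chi_{i',j'}$. Evaluating at $(g,0_H)$ and using $\chi_{H,j}(0_H) = \chi_{H,j'}(0_H) = 1$ (characters send the identity to $1$) gives $\chi_{G,i}(g) = \chi_{G,i'}(g)$ for all $g\in G$, hence $i = i'$; symmetrically $j=j'$.

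The argument is essentially routine and the main (mild) obstacle is simply invoking the right structural fact: that a finite abelian group has exactly as many characters as elements. Once that is in place, the counting plus injectivity closes the proof; alternatively one could avoid counting altogether by appealing to the universal property of the product of groups applied to the target $\mathbb{C}^*$, but the counting route is shorter and matches the level of the surrounding exposition.
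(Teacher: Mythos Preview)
Your proof is correct and its first half matches the paper's argument essentially line for line: verify the homomorphism property using that $\chi_{G,i}$ and $\chi_{H,j}$ are homomorphisms, then check $|\chi_{i,j}((g,h))|=1$ from the factor moduli.

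Where you differ is that you go further. The paper's proof stops after confirming that each $\chi_{i,j}$ is a character of $G\times H$; it does not address whether these are \emph{all} the characters. Your injectivity-plus-counting argument (evaluate at $(g,0_H)$ and $(0_G,h)$, then invoke $|\widehat{G\times H}|=|G||H|$) supplies exactly that missing exhaustiveness. This is not merely cosmetic: the lemma is invoked later to compute the full adjacency spectrum of a Cayley graph via the formula $\lambda_\chi=\sum_{s\in S}\chi(s)$, and that requires ranging over all characters. So your version is the same approach as the paper's for the part the paper actually proves, with an added step that the paper leaves implicit but that the downstream application genuinely needs.
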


\begin{proof}
Let $*, *'$ denote the operation in $G$, respectively $H$. Let $(g_1,h_1), (g_2,h_2) \in G \times H$. We want to prove that $\chi_{i,j}((g_1,h_1)(* \times *')(g_2,h_2)) = \chi_{i,j}((g_1,g_2)) \chi_{i,j}((g_2,h_2))$ since this is a sufficient condition for $\chi_{i,j}$ to be a group homomorphism. Observe:
\[
\chi_{i,j}((g_1,h_1)(* \times *')(g_2,h_2)) = \chi_{i,j}((g_1*g_2,h_1*'h_2)) = \chi_{G,i}(g_1*g_2) \chi_{H,j}(h_1*'h_2)
\]
and
\[
\chi_{i,j}((g_1,g_2)) \chi_{i,j}((g_2,h_2)) = \chi_{G,i}(g_1)\chi_{H,j}(g_2)\chi_{G,i}(h_1)\chi_{H,j}(h_2) = \chi_{G,i}(g_1*g_2) \chi_{H,j}(h_1*'h_2),
\]
since $\chi_{G,i}$ and $\chi_{H,j}$ are group homomorphisms of $G, H$, respectively. So $\chi_{i,j}$ is a group homomorphism of $G \times H$. Moreover, for any $(g,h) \in G \times H$, we have 
\[
|\chi_{i,j}((g,h))| = |\chi_{G,i}(g)| \cdot |\chi_{H,j}(h)| = 1,
\]
since $\chi_{G,i}$ and $\chi_{H,j}$ are characters of $G,H$ respectively. Hence $\chi_{i,j}$ is a character of $G \times H$.
\end{proof}

With Lemma \ref{lem: characters cart prod} and Example \ref{ex: characters Z/mZ}, the characters of finite abelian groups are now completely defined. The next result tells us how characters determine the adjacency eigenvalues of a Cayley graph, which is the final bit of information needed for the proof of Proposition \ref{prop: eigval pr graph}.

\begin{lemma}[\text{\cite{lovasz1975spectra} 
}] \label{lemma: characters cayley graph}
Let $G$ be a finite abelian group, let $\chi_i$ for be the characters of $G$, and let $S \subseteq G$ be a symmetric set. The adjacency eigenvalues of the Cayley graph over group $G$ with connecting set $S$ are given by
\[
\lambda_i = \sum_{s \in S} \chi_i(s),
\]
for $i = 0, \ldots, |G|-1$.
\end{lemma}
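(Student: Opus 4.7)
The plan is to exhibit, for each character $\chi_i$ of $G$, an explicit eigenvector of the adjacency matrix $A$ of the Cayley graph with eigenvalue $\sum_{s\in S}\chi_i(s)$, and then argue that these eigenvectors span $\mathbb{C}^{|G|}$, so that the resulting list exhausts the spectrum.

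First I would set up notation. Let $A \in \{0,1\}^{G\times G}$ be the adjacency matrix of the Cayley graph, so $A_{x,y}=1$ precisely when $y-x\in S$. For each character $\chi_i$ of $G$ define the vector $v_i \in \mathbb{C}^{G}$ by $(v_i)_x := \chi_i(x)$. The key computation is then
\begin{align*}
(Av_i)_x \;=\; \sum_{y\in G} A_{x,y}\,\chi_i(y) \;=\; \sum_{s\in S}\chi_i(x+s) \;=\; \chi_i(x)\sum_{s\in S}\chi_i(s),
\end{align*}
where the last step uses that $\chi_i$ is a group homomorphism from $(G,+)$ to $(\mathbb{C}\setminus\{0\},\cdot)$. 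This shows $Av_i = \lambda_i v_i$ with $\lambda_i := \sum_{s\in S}\chi_i(s)$.

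Next I would observe that these eigenvalues are real, as required since $A$ is a real symmetric matrix. Because $S$ is closed under inverses, the map $s\mapsto -s$ is a bijection of $S$, hence
\begin{align*}
\overline{\lambda_i} \;=\; \sum_{s\in S}\overline{\chi_i(s)} \;=\; \sum_{s\in S}\chi_i(-s) \;=\; \sum_{s\in S}\chi_i(s) \;=\; \lambda_i,
\end{align*}
using $|\chi_i(s)|=1$ and $\chi_i(-s)=\chi_i(s)^{-1}=\overline{\chi_i(s)}$.

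Finally I would invoke the standard fact that a finite abelian group $G$ has exactly $|G|$ characters and that they form an orthogonal basis of $\mathbb{C}^G$ with respect to the inner product $\langle u,w\rangle = \sum_{g\in G} u(g)\overline{w(g)}$ (this follows from the decomposition of $G$ into cyclic factors used earlier in the paper, together with the orthogonality of roots of unity). Thus the vectors $\{v_i\}_{i=0}^{|G|-1}$ form a basis of $\mathbb{C}^{G}$ consisting of eigenvectors of $A$, and the $\lambda_i$ enumerated above constitute the full spectrum. The only mildly subtle step is this last appeal to character orthogonality; everything else is a direct homomorphism computation.
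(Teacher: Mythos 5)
Your proof is correct and complete: the character vectors are eigenvectors by the homomorphism computation, and character orthogonality guarantees they form a basis of $\mathbb{C}^G$, so the list exhausts the spectrum. The paper does not prove this lemma at all --- it is quoted from Lov\'asz's paper on spectra of graphs --- and your argument is precisely the standard one given there, with the (optional but welcome) extra check that symmetry of $S$ forces the $\lambda_i$ to be real.
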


\begin{proof}[Proof of Proposition \ref{prop: eigval pr graph}.]
The graph $G_\text{pr}(\Fq^n)$ is a Cayley graph over $\Fq^n$ with connecting set $S := \{c\textbf{x}: c \in \Fq^*, \textbf{x} \in \{\textbf{e}_1, \ldots, \textbf{e}_n, \textbf{1}\} \}$. First we determine the characters of $\Fq$. The field $\Fq$ seen as a group is isomorphic to $(\mathbb{Z}/p\mathbb{Z})^k$ for some prime $p$ such that $q=p^k$. Since the characters of $\mathbb{Z}/p\mathbb{Z}$ are known to be $\chi_r(x) = (\zeta_p)^{rx}$ for $r=0, \ldots, p-1$ where $\zeta_p = \exp \left( {2 \pi i}/{p} \right)$ (see Example \ref{ex: characters Z/mZ}), Lemma \ref{lem: characters cart prod} tells us that the characters of $\Fq$ are 
\[
\chi_\textbf{r}(\textbf{x}) = \prod_{j=1}^k (\zeta_p)^{r_jx_j} = (\zeta_p)^{\sum_{j=1}^k r_jx_j},
\]
for $\textbf{r} \in [\![p-1]\!]^k$, where $\textbf{x} = (x_1, \ldots, x_k) \in (\mathbb{Z}/p\mathbb{Z})^k \cong \Fq$. The multiplication $r_jx_j$ is taken modulo $p$; this abuse of notation is used more often in this proof. 
Now we can determine the characters of $\Fq^n$, again using Lemma \ref{lem: characters cart prod}:
\begin{equation} \label{eq: characters Fqn}
\chi_\textbf{r}((\textbf{x}_1, \ldots, \textbf{x}_n)) = \prod_{l=1}^n \chi_{\textbf{r}_l}(\textbf{x}_l) = \prod_{l=1}^n (\zeta_p)^{\sum_{j=1}^k r_{l_j}x_{l_j}} = (\zeta_p)^{\sum_{l=1}^n \sum_{j=1}^k r_{l_j}x_{l_j}},
\end{equation}
for $\textbf{r} = (\textbf{r}_1, \ldots, \textbf{r}_n)$ with $\textbf{r}_l = (r_{l_1}, \ldots, r_{l_k}) \in [\![p-1]\!]^k$, $l=1, \ldots, n$, where $\textbf{x}_l \in (\mathbb{Z}/p\mathbb{Z})^k \cong \Fq$, $l=1, \ldots, n$. 
By Lemma \ref{lemma: characters cayley graph} the adjacency eigenvalues of $G_\text{pr}(\Fq^n)$ are then
\[
\lambda_\textbf{r} = \sum_{\textbf{s} \in S} \chi_\textbf{r}(\textbf{s}),
\]
for tuples $\textbf{r} \in ([\![p-1]\!]^k)^n$, where every $\textbf{s} \in S$ is viewed as an element of $([\![p-1]\!]^k)^n$.

Now we simplify this expression. Let $\textbf{s} \in S$, then $\textbf{s} = c \textbf{x}$ for some $c \in \Fq^*$ and $\textbf{x} \in \{\textbf{e}_1, \ldots, \textbf{e}_n, \textbf{1}\}$. If $\textbf{s}$ is viewed as an element of $([\![p-1]\!]^k)^n$, then $\textbf{s} = (\textbf{c}, \textbf{0}, \ldots, \textbf{0}), \ldots, (\textbf{0}, \ldots, \textbf{0}, \textbf{c})$ or $(\textbf{c}, \ldots, \textbf{c})$ for some $\textbf{c} \in [\![p-1]\!]^k$, $\textbf{c} \neq \textbf{0}$. So for a fixed $\textbf{r} \in ([\![p-1]\!]^k)^n$ we get:
\[
\lambda_\textbf{r} = \sum_{\textbf{c} \in [\![p-1]\!]^k, \textbf{c} \neq \textbf{0}} \chi_\textbf{r}((\textbf{c}, \textbf{0}, \ldots, \textbf{0})) + \cdots + \chi_\textbf{r}((\textbf{0}, \ldots, \textbf{0}, \textbf{c})) + \chi_\textbf{r}((\textbf{c}, \ldots, \textbf{c})).
\]
Since $\chi_{r_l}(\textbf{0)} = 1$ for $l=1, \ldots, n$, this simplifies to
\[
\lambda_\textbf{r} = \sum_{\textbf{c} \in [\![p-1]\!]^k, \textbf{c} \neq \textbf{0}} \chi_{\textbf{r}_1}(\textbf{c}) + \cdots + \chi_{\textbf{r}_n}(\textbf{c}) + \prod_{l=1}^n \chi_{\textbf{r}_l}(\textbf{c}).
\]
Using the expression for the characters from Equation \eqref{eq: characters Fqn} and letting the sum also run over $\textbf{c}=\textbf{0}$ gives:
\begin{equation} \label{eq: lambda characters with sums}
\lambda_\textbf{r} = \sum_{\textbf{c} \in [\![p-1]\!]^k} \sum_{l=1}^n (\zeta_p)^{\sum_{j=1}^k r_{l_j}c_j} + (\zeta_p)^{\sum_{l=1}^n \sum_{j=1}^k r_{l_j} c_j} - n - 1.
\end{equation}

We know that $1+ \zeta_m + \cdots + (\zeta_m)^{m-1} = 0$ for any $m$-th root of unity $\zeta_m \neq 1$. Since $\sum_{j=1}^k r_{l_j}c_j \mod p$ attains every value of $\{0, \ldots, p-1\}$ equally often when $\textbf{r}_l \neq \textbf{0}$ for $\textbf{c} \in [\![p-1]\!]^k$, we get 
\[
\sum_{\textbf{c} \in [\![p-1]\!]^k} (\zeta_p)^{\sum_{j=1}^k r_{l_j}c_j} = 0
\]
for $l=1, \ldots, n$ if $\textbf{r}_l \neq \textbf{0}$. If $\textbf{r}_l = \textbf{0}$, then 
\[
\sum_{\textbf{c} \in [\![p-1]\!]^k} (\zeta_p)^{\sum_{j=1}^k r_{l_j}c_j} = \sum_{\textbf{c} \in [\![p-1]\!]^k} 1 = p^k = q.
\]
Also $\sum_{l=1}^n \sum_{j=1}^k r_{l_j} c_j \mod p$ attains every value in $\{0,\ldots, p-1\}$ equally often when $\sum_{l=1}^n \textbf{r}_l = (\sum_{l=1}^n r_{l_1}, \ldots, \sum_{l=1}^n r_{l_k}) \not \equiv \textbf{0} \mod p$ for $\textbf{c} \in [\![p-1]\!]^k$. So 
\[
\sum_{\textbf{c} \in [\![p-1]\!]^k} (\zeta_p)^{\sum_{l=1}^n \sum_{j=1}^k r_{l_j} c_j} = 0
\]
if $\sum_{l=1}^n \textbf{r}_l \not \equiv \textbf{0} \mod p$. If  $\sum_{l=1}^n \textbf{r}_l \equiv \textbf{0} \mod p$, then
\[
\sum_{\textbf{c} \in [\![p-1]\!]^k} (\zeta_p)^{\sum_{l=1}^n \sum_{j=1}^k r_{l_j} c_j} = \sum_{\textbf{c} \in [\![p-1]\!]^k} (\zeta_p)^{\sum_{j=1}^k 0 \cdot c_j} = \sum_{\textbf{c} \in [\![p-1]\!]^k} 1 = p^k = q.
\]
Combining these four observations with Equation \eqref{eq: lambda characters with sums} gives the following formula for the eigenvalues:
\[
\lambda_\textbf{r} = \left( \sum_{l=1}^n q \mathbbm{1}\{\textbf{r}_l = \textbf{0}\} \right) + q \mathbbm{1} \left\{\sum_{l=1}^n \textbf{r}_l \equiv \textbf{0} \mod p \right\} - n -1,
\]
for $\textbf{r}=(\textbf{r}_1, \ldots, \textbf{r}_n) \in ([\![p-1]\!]^k)^n$. 

For the last step of this proof, we note that $(a_1, \ldots, a_k) \in [\![p-1]\!]^k$ can be related to $a \in[\![q-1]\!]$, where $q=p^k$, by $a = \sum_{j=1}^k a_j p^{j-1}$. Then the conditions $\textbf{r}_l = \textbf{0}$ and $\sum_{l=1}^n \textbf{r}_l \equiv \textbf{0} \mod p$ are equivalent to the conditions $r_l = 0$ and $\sum_{l=1}^n r_l \equiv 0 \mod q$, respectively. Using this observation, the eigenvalues of $G_\text{pr}(\Fq^n)$ for $n \geq 2$ equal
\[
\lambda_\textbf{r} = \left( \sum_{l=1}^n q \mathbbm{1} \{r_l = 0\} \right) + q \mathbbm{1} \left\{ \sum_{l=1}^n r_l \equiv 0 \mod q \right\} - n - 1,
\]
for tuples $\textbf{r}=(r_1, \ldots, r_n) \in [\![q-1]\!]^n$, which is what we wanted to prove.
\end{proof}

Now we can derive the distinct eigenvalues of the phase-rotation distance graph for $n \geq 2$; these distinct eigenvalues follow directly from Proposition \ref{prop: eigval pr graph}.

\begin{corollary}
The distinct adjacency eigenvalues of $G_\text{pr}(\Fq^n)$ for $n \geq 2$ are
\[
\begin{cases}
2i - n -1, \quad \text{for } i=1,3, \ldots, n-1, n+1 &\text{if } q=2, n \text{ even}, \\
2i-n-1, \quad \text{for } i=0, 2, \ldots, n-1, n+1 &\text{if } q=2, n \text{ odd}, \\
iq - n - 1, \quad \text{for } i=0, 1, \ldots, n-1, n+1 &\text{if } q \geq 3. \\
\end{cases}
\]
\end{corollary}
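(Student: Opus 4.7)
The starting point is the observation that Proposition \ref{prop: eigval pr graph} expresses $\lambda_\textbf{r}$ entirely in terms of two statistics of $\textbf{r}$: the count $z(\textbf{r}) := |\{l : r_l = 0\}| \in \{0, 1, \ldots, n\}$ and the indicator $s(\textbf{r}) := \mathbbm{1}\{\sum_l r_l \equiv 0 \pmod q\} \in \{0, 1\}$. Thus every eigenvalue has the form $iq - n - 1$ for $i := z(\textbf{r}) + s(\textbf{r}) \in \{0, 1, \ldots, n + 1\}$, and the proof reduces to determining which values of $i$ are actually attained by some $\textbf{r} \in [\![q-1]\!]^n$.

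I would first dispose of the two boundary values that behave the same for all $q$. The value $i = n + 1$ forces $(z, s) = (n, 1)$, attained uniquely by $\textbf{r} = \textbf{0}$. The value $i = n$ requires $(z, s) \in \{(n, 0), (n - 1, 1)\}$, but $(n, 0)$ fails because $\textbf{r} = \textbf{0}$ forces $s = 1$, and $(n - 1, 1)$ fails because a single nonzero entry lies in $\{1, \ldots, q - 1\}$ and so contributes a sum not $\equiv 0 \pmod q$. Hence $i = n$ is never attained, explaining the gap that appears in all three cases of the statement.

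For $q \geq 3$, I would next verify that every $i \in \{0, 1, \ldots, n - 1\}$ is realized, by an explicit construction. Take $r_1 = \cdots = r_i = 0$ and $r_{i+1} = \cdots = r_n = 1$; this gives $z = i$ with sum $n - i$. If $n - i \not\equiv 0 \pmod q$ we already have $s = 0$ and $z + s = i$; otherwise, replacing $r_{i+1}$ by $2$ (allowed because $q \geq 3$) shifts the sum to $n - i + 1 \not\equiv 0 \pmod q$, again giving $s = 0$. Converting through $\lambda = iq - n - 1$ yields the third line of the corollary.

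The $q = 2$ case is the delicate one and the main obstacle in the argument, because now every $r_l \in \{0, 1\}$ forces $\sum_l r_l \equiv n - z \pmod 2$, so $s$ becomes a deterministic function of $z$: one has $s = 1$ iff $z \equiv n \pmod 2$. Thus $i = z + s$ equals either $z$ or $z + 1$, in a pattern dictated by the parity of $n$. When $n$ is even, even $z \in \{0, 2, \ldots, n - 2\}$ contribute $i = z + 1 \in \{1, 3, \ldots, n - 1\}$, odd $z$ contribute $i = z$ in the same set, and $z = n$ contributes $i = n + 1$, so the union is $\{1, 3, \ldots, n - 1, n + 1\}$. When $n$ is odd the symmetric enumeration gives $\{0, 2, \ldots, n - 1, n + 1\}$. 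In either subcase, multiplying by $q = 2$ and subtracting $n + 1$ reproduces the first two lines of the corollary, the only real subtleties being the systematic exclusion of $i = n$ and the parity bookkeeping.
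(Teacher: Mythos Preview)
Your proof is correct. The paper gives no explicit argument for this corollary, simply stating that ``these distinct eigenvalues follow directly from Proposition~\ref{prop: eigval pr graph}''; your write-up supplies precisely the details the paper elides, via the natural route of rewriting $\lambda_{\mathbf r}=q(z+s)-n-1$ and determining the attainable values of $i=z+s$.
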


The expressions for the (distinct) eigenvalues of the phase-rotation adjacency graph can be used in the Inertia-type bound and the Ratio-type bound to derive new bounds on the cardinality of phase-rotation codes. Then we compare these new bounds to a state of the art bound, namely a Singleton-type bound for phase-rotation codes.

\begin{theorem}[\text{Singleton-type bound, \cite{SR2025}}]
\label{th: singleton pr}
Let $\mathcal{C} \subseteq \Fq^n$ be a code of minimum phase-rotation distance $d$. Then 
\[
|\mathcal{C}| \leq \begin{cases}
    q^{n-d+1} &\text{if } d<1+\lceil n-\tfrac{n}{q} \rceil, \\
    1 &\text{otherwise}.
\end{cases}
\]
\end{theorem}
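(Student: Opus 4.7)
The plan is to deduce the statement from two ingredients: the Singleton-type bound for general projective metrics (Theorem~\ref{th: singleton projective}) specialised to $\mathcal{F}=\mathcal{F}_\text{pr}$, together with a computation of the diameter of the graph $G_\text{pr}(\Fq^n)$.

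First I would apply the weaker right-hand estimate in Theorem~\ref{th: singleton projective} with $\mathcal{F}=\mathcal{F}_\text{pr}$. This gives $|\mathcal{C}|\leq q^{n-d+1}$ with no further work, so the first case of the statement is immediate and does not require computing $\mu_{\mathcal{F}_\text{pr}}(d-1)$.

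For the second case, I would observe that any code with $|\mathcal{C}|\geq 2$ necessarily satisfies $d\leq D$, where $D$ is the diameter of $G_\text{pr}(\Fq^n)$, since geodesic distance in this graph coincides with the phase-rotation distance. Therefore the claim $|\mathcal{C}|\leq 1$ will follow once I show $D=\lceil n-n/q\rceil$, so that the hypothesis $d\geq 1+\lceil n-n/q\rceil$ forces $d>D$. To compute $D$, I would first derive the closed-form expression
\[
w_\text{pr}(\textbf{v})=\min\Bigl(n-f_0(\textbf{v}),\ n+1-\max_{c\in\Fq^*}f_c(\textbf{v})\Bigr), \qquad f_c(\textbf{v}):=|\{i\in[n]:v_i=c\}|,
\]
through a short case analysis on whether an optimal projective decomposition of $\textbf{v}$ uses $F_{n+1}=\text{span}(\textbf{1})$ and, if so, with what coefficient. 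The lower bound $D\geq \lceil n-n/q\rceil$ is then realised by an explicit vector whose coordinates distribute the elements of $\Fq$ as evenly as possible (placing $0$ among the less frequent values when $q\nmid n$). For the matching upper bound I would use pigeonhole on the multiplicities $\{f_c(\textbf{v})\}_{c\in\Fq}$, splitting the analysis into $q\mid n$ and $q\nmid n$: in each case, if $f_0(\textbf{v})$ is too small to make the first term of the min at most $\lceil n-n/q\rceil$, then $\sum_{c\neq 0}f_c(\textbf{v})=n-f_0(\textbf{v})$ is large enough that a second pigeonhole forces $\max_{c\in\Fq^*}f_c(\textbf{v})$ above the threshold needed to bound the second term by $\lceil n-n/q\rceil$.

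The main delicate point is the closed formula for $w_\text{pr}$: I must argue that in an optimal decomposition involving $F_{n+1}$ the coefficient of $\textbf{1}$ may be assumed nonzero, since a zero coefficient reduces the decomposition to the purely standard-basis case and is never strictly shorter. The pigeonhole bookkeeping for the diameter upper bound also requires separating the two divisibility regimes, because the identity $\lceil n-n/q\rceil = n-\lfloor n/q\rfloor$ is tight only when $q\nmid n$ and the inequalities must be tracked carefully to keep the floor and ceiling functions consistent.
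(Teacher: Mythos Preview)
Your proposal is correct. For the first case both you and the paper invoke Theorem~\ref{th: singleton projective}; the paper plugs in $\mu_{\mathcal{F}_\text{pr}}(d-1)=d-1$ from \cite[Example~81]{SR2025} while you use the ready-made tail inequality $|\mathcal{C}|\le q^{n-d+1}$, and these coincide in this range. For the second case the routes genuinely diverge: the paper again appeals to the cited formula, which gives $\mu_{\mathcal{F}_\text{pr}}(d-1)=n$ and hence $|\mathcal{C}|\le q^{0}=1$, whereas you compute the diameter $D=\lceil n-n/q\rceil$ of $G_\text{pr}(\Fq^n)$ directly and conclude from $d>D$. The two statements are equivalent---indeed $\mu_\mathcal{F}(t)=n$ holds precisely when every vector has $\mathcal{F}$-weight at most $t$, i.e.\ when $D\le t$---so your diameter computation is effectively a self-contained proof of the piece of \cite[Example~81]{SR2025} that the paper imports as a black box. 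Your route costs a short combinatorial argument (the closed formula for $w_\text{pr}$ and the pigeonhole split on $q\mid n$ versus $q\nmid n$) but removes the external dependency; the paper's route is a two-line citation.
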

This bound follows from the Singleton-type bound for the projective metric from Theorem \ref{th: singleton projective} by using the result of \cite[Example 81]{SR2025}, which states that 
\[
w_{\mathcal{F}_\text{pr}}(t) = \begin{cases}
    t &\text{if } t< \lceil n -\tfrac{n}{q} \rceil, \\
    n &\text{otherwise},
\end{cases}
\]
for the set $\mathcal{F}_\text{pr}$ as defined in Definition \ref{def:phase-rotation}.

We start by considering the Ratio-type bound on the $k$-independence number for $k=1,2,3$, since there are explicit expressions for this bound that are independent of a choice of polynomial $p \in \R_k[x]$ (see \cite[Theorem 3.2]{Haemers1995InterlacingGraphs}, 
Theorem \ref{th: alpha_2}, and Theorem \ref{th: alpha3}, respectively). First consider the ratio bound on the independence number $\alpha$, i.e. the Ratio-type bound on the $k$-independence number $\alpha_k$ for $k=1$. Applied to the phase-rotation distance graph $G_\text{pr}(\Fq^n)$, this ratio bound gives the following upper bound on the independence number $\alpha(G_\text{pr}(\Fq^n))$.

\begin{theorem}\label{th: alpha pr graph}
Let $n \geq 2$. Then
\[
\alpha(G_\text{pr}(\Fq^n)) \leq \begin{cases}
2^{n-1} \frac{n-1}{n} & \text{if } q=2, n \text{ even}, \\
q^{n-1} &\text{if } q=2, n \text{ odd or } q \geq 3.
\end{cases}
\]
\end{theorem}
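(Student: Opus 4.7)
The plan is to apply Hoffman's ratio bound (the Ratio-type bound for $k=1$, see \cite[Theorem 3.2]{Haemers1995InterlacingGraphs}) directly to $G_\text{pr}(\Fq^n)$. Since the phase-rotation distance graph is regular of degree $\delta=(q-1)(n+1)$ on $q^n$ vertices, Hoffman's bound yields
\[
\alpha(G_\text{pr}(\Fq^n)) \leq q^n\,\frac{-\lambda_{\min}}{\delta-\lambda_{\min}},
\]
where $\lambda_{\min}$ is the smallest adjacency eigenvalue. All the required data have already been assembled: the corollary preceding the theorem lists the three distinct-eigenvalue regimes, so the whole proof reduces to identifying $\lambda_{\min}$ in each case and simplifying.

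First I would handle the generic case $q\geq 3$. The distinct eigenvalues are $iq-n-1$ for $i\in\{0,1,\ldots,n-1,n+1\}$, so the smallest is obtained at $i=0$, giving $\lambda_{\min}=-n-1$. Substituting,
\[
\alpha \leq q^n\cdot\frac{n+1}{(q-1)(n+1)+(n+1)} \;=\; q^n\cdot\frac{1}{q} \;=\; q^{n-1}.
\]
Next, for $q=2$ with $n$ odd, the distinct eigenvalues are $2i-n-1$ for $i\in\{0,2,\ldots,n-1,n+1\}$; since $i=0$ is included, again $\lambda_{\min}=-n-1$, while $\delta=n+1$ and $|V|=2^n$. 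The same algebra gives $\alpha\leq 2^{n-1}=q^{n-1}$.

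Finally, for $q=2$ with $n$ even, the distinct eigenvalues are $2i-n-1$ for $i\in\{1,3,\ldots,n-1,n+1\}$, so $i=0$ is \emph{not} available and the smallest eigenvalue is attained at $i=1$, giving $\lambda_{\min}=1-n$. With $\delta=n+1$ and $|V|=2^n$,
\[
\alpha \leq 2^n\cdot\frac{n-1}{(n+1)-(1-n)} \;=\; 2^n\cdot\frac{n-1}{2n} \;=\; 2^{n-1}\,\frac{n-1}{n},
\]
matching the stated bound. There is no real obstacle here: the only subtlety is the bookkeeping for the $q=2$, $n$ even case, where the eigenvalue $-n-1$ fails to appear in the spectrum and one must correctly pick $\lambda_{\min}=1-n$ instead. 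All other steps are routine substitutions using the corollary on the distinct eigenvalues of $G_\text{pr}(\Fq^n)$ and the degree computation from the earlier corollary.
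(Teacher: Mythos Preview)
Your proposal is correct and follows essentially the same approach as the paper: apply Hoffman's ratio bound, identify $\lambda_{\min}$ from the corollary on distinct eigenvalues (distinguishing the $q=2$, $n$ even case where $\lambda_{\min}=1-n$ from the others where $\lambda_{\min}=-n-1$), and simplify.
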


\begin{proof}
The largest eigenvalue is $\lambda_1 = (n+1)(q-1)$ for all $q,n$, while the smallest eigenvalue is $\lambda_{q^n} = 1-n$ if $q=2$, $n$ even and $\lambda_{q^n} = -n-1$ otherwise. The ratio bound from \cite[Theorem 3.2]{Haemers1995InterlacingGraphs} 
is applicable, so for $q=2$, $n$ even we get
\[
\alpha(G_\text{pr}(\mathbb{F}_2^n)) \leq 2^n \frac{-(1-n)}{(n+1)-(1-n)} = 2^{n-1} \frac{n-1}{n}.
\]
For $q=2$, $n$ odd or $q \geq 3$, we obtain
\[
\alpha(G_\text{pr}(\Fq^n)) \leq q^n \frac{-(-n-1)}{(n+1)(q-1)-(-n-1)} = q^n \frac{n+1}{q(n+1)} = q^{n-1}.
\]
\end{proof}

Let $A_q^\text{pr}(n,d)$ denote the maximum cardinality of code in $\Fq^n$ with minimum phase-rotation distance $d$. The upper bounds from Theorem \ref{th: alpha pr graph} can be translated to upper bounds on $A_q^\text{pr}(n,d)$ via Lemma \ref{lemma: geo dist then max card alphak}. 
\begin{corollary} \label{cor: pr alpha1}
The cardinality of phase-rotation codes in $\Fq^n$ of minimum distance 2 with $n \geq 2$ is upper bounded by:
\begin{equation} \label{eq: pr alpha1}
A_q^\text{pr}(n,2) \leq \begin{cases}
2^{n-1} \frac{n-1}{n} & \text{if } q=2, n \text{ even}, \\
q^{n-1} &\text{if } q=2, n \text{ odd or } q \geq 3.
\end{cases}
\end{equation}
\end{corollary}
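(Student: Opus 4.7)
The proof is a direct chaining of two results already established earlier in the excerpt, so the plan is short. The strategy is to apply Lemma \ref{lemma: geo dist then max card alphak} to reduce the code-cardinality problem to a $k$-independence problem on the phase-rotation distance graph, and then invoke Theorem \ref{th: alpha pr graph}.

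First I would recall that in the previous subsection condition \ref{item: C1} was verified for $G_\text{pr}(\Fq^n)$: the corollary right after Definition \ref{def:phase-rotation} says that the geodesic distance in $G_\text{pr}(\Fq^n)$ coincides with the phase-rotation distance. Hence Lemma \ref{lemma: geo dist then max card alphak} applies to the discrete metric space $(\Fq^n, d_\text{pr})$, so the maximum cardinality of a code $\mathcal{C} \subseteq \Fq^n$ of minimum phase-rotation distance $d'$ equals $\alpha_{d'-1}(G_\text{pr}(\Fq^n))$. Specialized to $d' = 2$, this yields
\[
A_q^\text{pr}(n,2) = \alpha_1(G_\text{pr}(\Fq^n)) = \alpha(G_\text{pr}(\Fq^n)).
\]

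Then I would simply plug in the bound from Theorem \ref{th: alpha pr graph}, which (for $n \geq 2$) gives exactly the two cases on the right-hand side of \eqref{eq: pr alpha1}. There is no computation left to do and no obstacle in the argument; the only thing to be careful about is citing condition \ref{item: C1} explicitly before invoking Lemma \ref{lemma: geo dist then max card alphak}, so that the reduction to an independence number problem is rigorous.
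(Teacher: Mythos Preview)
Your proposal is correct and follows essentially the same approach as the paper: the corollary is obtained by combining Lemma~\ref{lemma: geo dist then max card alphak} (via condition~\ref{item: C1} for $G_\text{pr}(\Fq^n)$) with Theorem~\ref{th: alpha pr graph}, exactly as you describe.
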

Next we compare these upper bounds from Corollary \ref{cor: pr alpha1} to the Singleton-type upper bound from Theorem \ref{th: singleton pr}. 

\begin{proposition}
Let $n \geq 2$. The upper bounds on $A_q^\text{pr}(n,2)$ in Equation \eqref{eq: pr alpha1}, which are a consequence of the ratio bound, are no worse than the upper bound from the Singleton-type bound of Theorem \ref{th: singleton pr}.
\end{proposition}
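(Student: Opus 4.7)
The plan is a direct case-by-case comparison, matching the three-way partition in Equation \eqref{eq: pr alpha1}, after first evaluating the Singleton-type bound of Theorem \ref{th: singleton pr} at $d = 2$.

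I would start by simplifying the Singleton-type bound at $d = 2$. The inequality $d < 1 + \lceil n - n/q \rceil$ becomes $\lceil n - n/q \rceil \geq 2$. For $q \geq 3$ and $n \geq 2$ we have $n - n/q \geq 2n/3 \geq 4/3$, so the ceiling is at least $2$ and Singleton yields $q^{n-1}$. For $q = 2$, the condition $\lceil n/2 \rceil \geq 2$ holds precisely when $n \geq 3$, giving Singleton bound $2^{n-1}$; and for $q = 2, n = 2$, the condition fails and the Singleton bound degenerates to $1$.

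Next I would run the comparison in the matching cases of Equation \eqref{eq: pr alpha1}. If $q \geq 3$ (with $n \geq 2$), or if $q = 2$ with $n$ odd (so automatically $n \geq 3$), the ratio bound is $q^{n-1}$, matching the Singleton bound exactly. If $q = 2$ with $n$ even, I would split further: for $n = 2$ the ratio bound equals $2^{n-1}(n-1)/n = 1$, which coincides with the Singleton value $1$; for even $n \geq 4$ the ratio bound $2^{n-1}(n-1)/n$ is strictly less than $2^{n-1}$, hence strictly improves on Singleton.

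The only subtle point is the boundary $q = 2, n = 2$, where the Singleton bound suddenly drops to $1$ because the threshold condition fails. Since the ratio bound happens to evaluate to $1$ as well, no obstacle arises; the remainder of the argument is routine arithmetic on the factor $(n-1)/n < 1$.
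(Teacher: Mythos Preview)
Your proposal is correct and follows essentially the same approach as the paper: both proofs simplify the Singleton-type condition at $d=2$, identify $q=2,\,n=2$ as the only boundary case, and then compare directly. The only cosmetic difference is that you bound $n-n/q \geq 2n/3$ for $q\geq 3$ whereas the paper rewrites the threshold as $n>1+\tfrac{1}{q-1}$; the case split and the handling of the $q=2,\,n=2$ boundary are identical.
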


\begin{proof}
The upper bound from the Singleton-type bound for $d=2$ is $q^{n-1}$ if $2<1+\lceil n - \tfrac{n}{q} \rceil$. This upper bound applies exactly if $n-\tfrac{n}{q} >1 \Leftrightarrow n>1+\tfrac{1}{q-1}$. If $q=2$, then we need $n>2$, and if $q\geq 3$, then $n \geq 2$ suffices to satisfy the condition $n>1+\tfrac{1}{q-1}$. In these cases, we can immediately see that both upper bounds from Equation \eqref{eq: pr alpha1} are at most $q^{n-1}$. 

In the other case, namely $q=2$, $n=2$, the Singleton-type upper bound for $d=2$ is $1$. But in this case our bound gives an upper bound of $2^{2-1} \cdot \tfrac{2-1}{2} = 1$. So our bounds of Equation \eqref{eq: pr alpha1} are no worse than than the Singleton-type bound.
\end{proof}

So the Ratio-type bound on the $k$-independence number for $k=1$ gives a bound on the maximum cardinality of phase-rotation codes that is at least as good as the Singleton-type bound. Next we consider the Ratio-type bound on the $2$-independence number $\alpha_2$. The cases $q=2$ and $q \geq 3$ are treated separately since the expression for the distinct eigenvalues of $G_\text{pr}(\Fq^n)$ for these cases is sufficiently different. First consider the case $q=2$ (and $k=2$).

\begin{theorem} \label{alpha2 q2 pr graph}
Let $n \geq 3$. Then
\[
\alpha_2(G_\text{pr}(\mathbb{F}_2^n)) \leq \begin{cases}
2^n \frac{n-2}{n(n+4)} & \text{if } n \equiv 0 \mod 4, \\
2^n \frac{n-3}{(n+3)(n-1)} & \text{if } n \equiv 1 \mod 4, \\
2^n \frac{1}{n+2} & \text{if } n \equiv 2 \mod 4, \\
2^n \frac{1}{n+5} & \text{if } n \equiv 3 \mod 4 .
\end{cases}
\]
\end{theorem}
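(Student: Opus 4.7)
The plan is to apply Theorem \ref{th: alpha_2}, which gives the sharp Ratio-type upper bound on the 2-independence number of any regular graph with at least three distinct eigenvalues. From the corollary preceding the statement, for $q=2$ the distinct eigenvalues of $G_\text{pr}(\mathbb{F}_2^n)$ take the form $\theta = 2i - n - 1$, where $i$ ranges over $\{1,3,\ldots,n-1,n+1\}$ when $n$ is even and over $\{0,2,\ldots,n-1,n+1\}$ when $n$ is odd. For $n \geq 3$ there are at least three distinct eigenvalues in both parities, so the hypotheses of Theorem \ref{th: alpha_2} are met; the largest is $\theta_0 = n+1$, and the vertex count is $2^n$.

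It remains to identify the pair $(\theta_s,\theta_{s-1})$ playing the role of $(\theta_i,\theta_{i-1})$ in the theorem, namely the largest distinct eigenvalue $\theta_s \le -1$ and its immediate predecessor. The inequality $2i - n - 1 \le -1$ is equivalent to $i \le n/2$, so $\theta_s$ comes from the largest admissible $i$ of the correct parity (odd when $n$ is even, even when $n$ is odd) with $i \le n/2$, while $\theta_{s-1}$ comes from the next larger admissible $i$.

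The case split is driven by $n \bmod 4$. When $n \equiv 0 \pmod 4$, the largest odd $i \le n/2$ is $\tfrac{n}{2}-1$, giving $\theta_s = -3$ and $\theta_{s-1} = 1$; when $n \equiv 2 \pmod 4$, the largest odd $i \le n/2$ is $n/2$ itself, giving $\theta_s = -1$ and $\theta_{s-1} = 3$; when $n \equiv 1 \pmod 4$, the largest even $i \le n/2$ is $\tfrac{n-1}{2}$, giving $\theta_s = -2$ and $\theta_{s-1} = 2$; and when $n \equiv 3 \pmod 4$, it is $\tfrac{n-3}{2}$, giving $\theta_s = -4$ and $\theta_{s-1} = 0$. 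Plugging each pair, together with $\theta_0 = n+1$ and $|V| = 2^n$, into the formula of Theorem \ref{th: alpha_2} and simplifying recovers the four stated inequalities. For instance, in the second case $\theta_0 - \theta_s = n+2$ and $\theta_0 - \theta_{s-1} = n-2$, so the factor $n-2$ in the numerator cancels and one obtains $2^n/(n+2)$; the other three simplifications are analogous.

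There is no real obstacle beyond bookkeeping: the result is a direct application of Theorem \ref{th: alpha_2} once the eigenvalues are known. The only care needed is matching the parity of admissible indices $i$ to the residue of $n$ modulo $4$, since it is precisely this residue that determines the parity of $\lfloor n/2 \rfloor$ and hence produces four distinct outcomes for $\theta_s$.
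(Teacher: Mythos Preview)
Your proof is correct and follows essentially the same route as the paper's: both apply Theorem~\ref{th: alpha_2} directly, solve $2i-n-1\le -1$ to locate the critical eigenvalue, split into four cases according to $n\bmod 4$ based on the parity constraint on $i$, and obtain the same pairs $(\theta_s,\theta_{s-1})$ before substituting into the formula. There is no substantive difference in method or in the level of detail.
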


\begin{proof}
Since $n \geq 3$, $G_\text{pr}(\mathbb{F}_2^n)$ has at least three distinct eigenvalues, so Theorem \ref{th: alpha_2} is applicable. The largest eigenvalue which is at most $-1$ satisfies:
\[
2i-n-1 \leq -1 \Leftrightarrow 2i \leq n \Leftrightarrow i \leq \frac{n}{2}.
\]

We start with the case where $n$ is even, or $n \equiv 0,2 \mod 4$. 
Since $i$ has to be odd for $2i-n-1$ to be an eigenvalue when $n$ is even, we get $i = \frac{n}{2}$ if $n \equiv 2 \mod 4$ and $i = \frac{n}{2}-1$ if $n \equiv 0 \mod 4$. If $n \equiv 2 \mod 4$, we have $\theta_i = -1, \theta_{i-1} = 3, \theta_0 = n+1$. Then
\[
\alpha_2(G_\text{pr}(\mathbb{F}_2^n)) \leq 2^n \frac{n+1-3}{(n+2)(n-2)} = \frac{2^n}{n+2}.
\]
If $n \equiv 0 \mod 4$, we have $\theta_i = -3, \theta_{i-1} = 1, \theta_0 = n+1$. Then
\[
\alpha_2(G_\text{pr}(\mathbb{F}_2^n)) \leq 2^n \frac{n+1-3}{(n+4)n} = 2^n \frac{n-2}{n(n+4)}.
\]

Next we deal with the case where $n$ is odd, or $n \equiv 1,3 \mod 4$. Since $i$ has to be even for $2i-n-1$ to be an eigenvalue when $n$ is odd, we get $i = \frac{n-1}{2}$ if $n \equiv 1 \mod 4$ and $i = \frac{n-1}{2}-1$ if $n \equiv 3 \mod 4$. If $n \equiv 1 \mod 4$, we have $\theta_i = -2, \theta_{i-1} = 2, \theta_0 = n+1$. Then
\[
\alpha_2(G_\text{pr}(\mathbb{F}_2^n)) \leq 2^n \frac{n+1-4}{(n+3)(n-1)} = 2^n \frac{n-3}{(n+3)(n-1)}.
\]
If $n \equiv 3 \mod 4$, we have $\theta_i = -4, \theta_{i-1} = 0, \theta_0 = n+1$. Then
\begin{align*}
&\alpha_2(G_\text{pr}(\mathbb{F}_2^n)) \leq 2^n \frac{n+1}{(n+5)(n+1)} = \frac{2^n}{n+5}.
\qedhere\end{align*}
\end{proof}

The upper bounds from Theorem \ref{alpha2 q2 pr graph} can be translated to upper bounds on $A^\text{pr}_2(n,3)$ via Lemma \ref{lemma: geo dist then max card alphak}. 
\begin{corollary} \label{cor: alpha2 q2}
The maximum cardinality of phase-rotation codes in $\mathbb{F}_2^n$ of minimum distance 3 with $n \geq 3$ is upper bounded by
\begin{equation} \label{eq:pr alpha2 q2}
A_2^\text{pr}(n,3) \leq \begin{cases}
2^n \frac{n-2}{n(n+4)} & \text{if } n \equiv 0 \mod 4, \\
2^n \frac{n-3}{(n+3)(n-1)} & \text{if } n \equiv 1 \mod 4, \\
2^n \frac{1}{n+2} & \text{if } n \equiv 2 \mod 4, \\
2^n \frac{1}{n+5} & \text{if } n \equiv 3 \mod 4. 
\end{cases}
\end{equation}
\end{corollary}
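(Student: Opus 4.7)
The plan is to observe that this corollary is essentially a direct translation of the spectral bound in Theorem \ref{alpha2 q2 pr graph} into a coding-theoretic statement via Lemma \ref{lemma: geo dist then max card alphak}. There is no new analytic content; the work has already been done in establishing the eigenvalue bound on $\alpha_2(G_\text{pr}(\mathbb{F}_2^n))$.

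First I would invoke the corollary appearing earlier in the section stating that the geodesic distance in $G_\text{pr}(\mathbb{F}_q^n)$ equals the phase-rotation distance, so that hypothesis \ref{item: C1} holds for the space $(\mathbb{F}_2^n, d_\text{pr})$. This verifies the assumption of Lemma \ref{lemma: geo dist then max card alphak}, which then yields the identity
\[
A_2^\text{pr}(n,3) = \alpha_{3-1}\bigl(G_\text{pr}(\mathbb{F}_2^n)\bigr) = \alpha_2\bigl(G_\text{pr}(\mathbb{F}_2^n)\bigr),
\]
since $d'=3$ gives $k = d'-1 = 2$.

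Next I would apply Theorem \ref{alpha2 q2 pr graph} directly, which provides the four case-by-case upper bounds on $\alpha_2(G_\text{pr}(\mathbb{F}_2^n))$ according to the residue of $n$ modulo $4$. Substituting into the displayed equation yields the four stated bounds on $A_2^\text{pr}(n,3)$, completing the argument.

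There is no real obstacle here: all substantive spectral work is already packaged into Theorem \ref{alpha2 q2 pr graph}, and the geodesic-distance condition needed to pass from the graph parameter $\alpha_2$ to the coding parameter $A_2^\text{pr}(n,3)$ has been verified earlier in the section. The only mild subtlety is ensuring that the hypothesis $n \geq 3$ carried over from Theorem \ref{alpha2 q2 pr graph} is recorded in the corollary, which it is.
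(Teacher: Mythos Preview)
Your proposal is correct and matches the paper's approach exactly: the paper states immediately before the corollary that the bounds from Theorem \ref{alpha2 q2 pr graph} translate to bounds on $A_2^\text{pr}(n,3)$ via Lemma \ref{lemma: geo dist then max card alphak}, with no further argument given. Your identification of the geodesic-distance condition and the substitution $k=d'-1=2$ is precisely the intended reasoning.
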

Now we compare these upper bounds from Corollary \ref{cor: alpha2 q2} to the Singleton-type upper bound from Theorem \ref{th: singleton pr}.

\begin{proposition}
Let $n \geq 3$. The upper bounds on $A_2^\text{pr}(n,3)$ in Equation \eqref{eq:pr alpha2 q2}, which resulted from the Ratio-type bound, are no worse than the upper bound from the Singleton-type bound of Theorem \ref{th: singleton pr}. 
\end{proposition}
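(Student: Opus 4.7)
The plan is a direct case analysis on $n \bmod 4$, preceded by identifying which regime of the Singleton-type bound of Theorem \ref{th: singleton pr} applies. For $q=2$ and $d=3$, the condition $d < 1 + \lceil n - n/q \rceil$ becomes $3 < 1 + \lceil n/2 \rceil$, which is equivalent to $n \geq 5$. So I would split the argument into two ranges: the two small cases $n \in \{3,4\}$, where the Singleton-type bound equals $1$, and the generic range $n \geq 5$, where it equals $2^{n-2}$.

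First I would dispose of the small cases by direct substitution. For $n = 3$ (which is $\equiv 3 \pmod 4$), Equation \eqref{eq:pr alpha2 q2} gives $2^3/(3+5) = 1$; for $n = 4$ (which is $\equiv 0 \pmod 4$), it gives $2^4 (4-2)/(4 \cdot 8) = 1$. Both match the Singleton-type bound exactly. For $n \geq 5$, since $2^{n-2} = 2^n/4$, the desired inequality reduces in each residue class to showing that the coefficient of $2^n$ in the corresponding branch of \eqref{eq:pr alpha2 q2} is at most $1/4$. Clearing denominators gives trivial polynomial inequalities: for $n \equiv 0 \pmod 4$, $(n-2)/(n(n+4)) \leq 1/4$ rearranges to $n^2 + 8 \geq 0$; for $n \equiv 1 \pmod 4$, $(n-3)/((n+3)(n-1)) \leq 1/4$ rearranges to $(n-1)^2 + 8 \geq 0$; for $n \equiv 2 \pmod 4$, $1/(n+2) \leq 1/4$ is $n \geq 2$; and for $n \equiv 3 \pmod 4$, $1/(n+5) \leq 1/4$ is $n \geq -1$. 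All of these hold trivially for $n \geq 5$.

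There is no real obstacle here: the argument is a mechanical case split together with four one-line polynomial verifications. The only subtlety worth flagging is correctly translating the ceiling condition in Theorem \ref{th: singleton pr} at $q=2, d=3$ into the sharp cutoff $n \geq 5$, and noticing that the boundary cases $n = 3$ and $n = 4$ saturate the Singleton-type bound exactly rather than giving a strict improvement.
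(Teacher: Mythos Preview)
Your proposal is correct and follows essentially the same approach as the paper: both split into the small cases $n\in\{3,4\}$ (where the Singleton-type bound equals $1$ and is matched exactly) and the generic range $n\geq 5$, then handle the latter by the same four-case analysis modulo $4$, arriving at the identical trivially-true polynomial inequalities.
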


\begin{proof}
The upper bound of the Singleton-type bound for $d=3$ and $q=2$ is $2^{n-2}$ if $3<1+ \lceil n-\tfrac{n}{2} \rceil$, which is exactly if $\tfrac{n}{2} > 2 \Leftrightarrow n>4$. If this is the case, then we can compare the bounds and determine when the bounds from Equation \eqref{eq:pr alpha2 q2} are smaller than or equal to $2^{n-2}$. For $n \equiv 0 \mod 4$ we have:
\[
2^n \frac{n-2}{n(n+4)} \leq 2^{n-2} \Leftrightarrow 2^2(n-2) \leq n(n+4) \Leftrightarrow n^2 \geq -8,
\]
which trivially holds true. For $n \equiv 1 \mod 4$:
\[
2^n \frac{n-3}{(n+3)(n-1)} \leq 2^{n-2} \Leftrightarrow 2^2(n-3) \leq (n+3)(n-1) \Leftrightarrow n^2-2n+9 = (n-1)^2+8 \geq 0.
\]
Also this holds true. For $n \equiv 2 \mod 4$:
\[
2^n \frac{1}{n+2} \leq 2^{n-2} \Leftrightarrow 2^2 \leq n+2 \Leftrightarrow n \geq 2,
\]
which is true by the assumption on $n$. Lastly, for $n \equiv 3 \mod 4$ we get:
\[
2^n \frac{1}{n+5} \leq 2^{n-2} \Leftrightarrow 2^2 \leq n+5 \Leftrightarrow n \geq -1,
\]
which also holds by the assumption on $n$.

Next we consider the case that $n=3,4$. Then the Singleton-type bound gives an upper bound of $1$, while our bounds give values of $2^3 \cdot \tfrac{1}{8} = 1$ and $2^4 \cdot \tfrac{2}{32}=1$ for $n=3,4$ respectively. Hence the upper bounds on $A_2^\text{pr}(n,3)$ from Equation \eqref{eq:pr alpha2 q2} are no worse than the Singleton-type upper bound from Theorem \ref{th: singleton pr}.
\end{proof}

Hence the Ratio-type bound gives upper bounds on the cardinality of phase-rotation codes that are at least as good as the Singleton-type bound for $k=2$ and $q=2$. Next we consider the case $k=2$ and $q \geq 3$.

\begin{theorem} \label{th: alpha2 q3 pr graph}
Let $n \geq 2$ and $q \geq 3$. Then 
\[
\alpha_2(G_\text{pr}(\Fq^n)) \leq q^{n-2} \frac{n(n+1)+\lfloor \tfrac{n}{q} \rfloor q \big( -2-2n+q+ \lfloor \tfrac{n}{q} \rfloor q \big) }{ \big( n- \lfloor \tfrac{n}{q} \rfloor \big) \big( n+1-\lfloor \tfrac{n}{q} \rfloor \big)}.
\]
\end{theorem}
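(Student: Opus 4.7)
The plan is to specialize the closed-form Ratio-type expression in Theorem \ref{th: alpha_2} to $G_\text{pr}(\Fq^n)$. The hypotheses are met: Corollary \ref{cor: proj reg walk-r vertex} gives regularity, while the preceding corollary on distinct eigenvalues shows that when $q\geq 3$ and $n\geq 2$ the graph has exactly $n+1\geq 3$ distinct adjacency eigenvalues, which we write as $\theta(i):=iq-n-1$ for $i\in\{0,1,\dots,n-1,n+1\}$.

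First I would list these eigenvalues in decreasing order. The maximum is $\theta_0=(n+1)(q-1)$ (from $i=n+1$), and for $j=1,\dots,n$ one has $\theta_j=(n-j)q-n-1$. Next I would pinpoint the largest eigenvalue $\theta_i\le -1$: the inequality $(n-j)q-n-1\le -1$ rearranges to $j\geq n-n/q$, so with $\ell:=\lfloor n/q\rfloor$ the minimal admissible index is $i=n-\ell$, giving
\[
\theta_i=\ell q-n-1,\qquad \theta_{i-1}=(\ell+1)q-n-1.
\]
One must verify here that $\theta_{i-1}$ genuinely corresponds to a distinct eigenvalue, i.e.\ that $\ell+1$ belongs to $\{0,1,\dots,n-1,n+1\}$ and in particular $\ell+1\neq n$. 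The bound $\ell\leq n-2$ holds since $\ell=n-1$ would force $n\leq q/(q-1)<2$ under $q\geq 3$, contradicting $n\geq 2$; hence $\ell+1\in\{1,\dots,n-1\}$ and the gap at index $n$ never interferes.

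Finally I would substitute $\theta_0,\theta_i,\theta_{i-1}$ and the vertex count $q^n$ into the closed form of Theorem \ref{th: alpha_2}. The denominator factors as
\[
(\theta_0-\theta_i)(\theta_0-\theta_{i-1})=(n+1-\ell)q\cdot(n-\ell)q=q^2(n-\ell)(n+1-\ell),
\]
producing the prefactor $q^{n-2}/[(n-\ell)(n+1-\ell)]$. Expanding $\theta_i\theta_{i-1}$ and grouping by powers of $q$ yields
\[
\theta_0+\theta_i\theta_{i-1}=\ell(\ell+1)q^2-2(n+1)\ell q+n(n+1)=n(n+1)+\ell q\bigl(-2-2n+q+\ell q\bigr),
\]
matching the stated numerator. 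The only real obstacle is the bookkeeping: correctly locating $\theta_i$ among the distinct eigenvalues via the floor $\lfloor n/q\rfloor$ and confirming that the missing index $n$ never interferes with $\theta_{i-1}$; once this is settled the rest is routine algebra.
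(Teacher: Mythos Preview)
Your proposal is correct and follows exactly the same route as the paper: both apply Theorem~\ref{th: alpha_2} to the distinct eigenvalues $iq-n-1$, identify $\theta_i$ via $i=\lfloor n/q\rfloor$, and simplify. If anything, your treatment is slightly more careful in explicitly verifying that $\ell+1\leq n-2$ so that $\theta_{i-1}$ avoids the missing index $n$; the paper only checks that $\lfloor n/q\rfloor$ itself lies in the admissible range.
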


\begin{proof}
The distinct eigenvalues of $G_\text{pr}(\Fq^n)$ for $n\geq 2$ and $q \geq 3$ are $iq-n-1$ for $i=0,1\ldots, n-1,n+1$. Since $n \geq 2$, we have at least 3 distinct eigenvalues, so Theorem \ref{th: alpha_2} is applicable. First we determine the largest eigenvalue which is at most $-1$:
\[
iq-n-1 \leq -1 \Leftrightarrow iq \leq n \Leftrightarrow i \leq \frac{n}{q}.
\]
Taking $i = \lfloor \frac{n}{q} \rfloor$ gives this eigenvalue. Note that $0 \leq \lfloor \frac{n}{q} \rfloor \leq \frac{n}{3} \leq n-1$, so $i = \lfloor \frac{n}{q} \rfloor$ indeed gives an eigenvalue. Using the notation of Theorem \ref{th: alpha_2}, we have
\[
\theta_0 = (n+1)(q-1), \quad \theta_{i-1} = \left( \lfloor \tfrac{n}{q} \rfloor +1 \right)q-n-1, \quad \theta_i = \lfloor \tfrac{n}{q} \rfloor q -n-1.
\]
Then we obtain the following upper bound for $\alpha_2(G_\text{pr}(\Fq^n))$:
\[
q^n \frac{(n+1)(q-1) + \left( \lfloor \tfrac{n}{q} \rfloor q -n-1 \right) \left( ( \lfloor \tfrac{n}{q} \rfloor +1 )q-n-1 \right)}{\left( (n+1)(q-1) - (\lfloor \tfrac{n}{q} \rfloor q -n-1) \right) \left( (n+1)(q-1) - \big( ( \lfloor \tfrac{n}{q} \rfloor +1 ) q-n-1 \big) \right)}
\]
\[
= q^{n-2} \frac{n(n+1)+ \lfloor \tfrac{n}{q} \rfloor q \big( -2-2n+q+ \lfloor \tfrac{n}{q} \rfloor q \big)}{\big( n-\lfloor \tfrac{n}{q} \rfloor \big) \big( n+1-\lfloor \tfrac{n}{q} \rfloor \big)}.
\]
\end{proof}

This upper bound from Theorem \ref{th: alpha2 q3 pr graph} can be translated to an upper bound on $A_q^\text{pr}(n,3)$ via Lemma \ref{lemma: geo dist then max card alphak}.
\begin{corollary} \label{cor: pr alpha2}
The cardinality of phase-rotation codes of minimum distance $3$ with $q \geq 3$ and $n \geq 2$ is upper bounded by
\begin{equation} \label{eq:pr alpha2}
A_q^\text{pr}(n,3) \leq q^{n-2} \frac{n(n+1)+ \lfloor \tfrac{n}{q} \rfloor q \big( -2-2n+q+ \lfloor \tfrac{n}{q} \rfloor q \big)}{\big( n-\lfloor \tfrac{n}{q} \rfloor \big) \big( n+1-\lfloor \tfrac{n}{q} \rfloor \big)}.
\end{equation}
\end{corollary}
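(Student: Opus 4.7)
The plan is to derive this corollary as an immediate consequence of the preceding Theorem \ref{th: alpha2 q3 pr graph} combined with the code/graph correspondence of Lemma \ref{lemma: geo dist then max card alphak}. No new computation is required beyond checking that the hypotheses of both results are in place.

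First I would invoke Lemma \ref{lemma: geo dist then max card alphak}. To apply it, one needs condition \ref{item: C1}, namely that the geodesic distance in $G_\text{pr}(\Fq^n)$ coincides with the phase-rotation distance. This has already been verified in the corollary following Lemma \ref{lemma: proj geodesic dist}, where the phase-rotation distance was identified as the projective $\mathcal{F}_\text{pr}$-distance and the general statement of Lemma \ref{lemma: proj geodesic dist} was specialized accordingly. Consequently, the maximum cardinality of a phase-rotation code of minimum distance $d'$ in $\Fq^n$ equals the $(d'-1)$-independence number $\alpha_{d'-1}(G_\text{pr}(\Fq^n))$. Setting $d' = 3$ yields the equality
\[
A_q^\text{pr}(n,3) \;=\; \alpha_2(G_\text{pr}(\Fq^n)).
\]

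Second I would feed Theorem \ref{th: alpha2 q3 pr graph} into this equality. That theorem, valid under the hypotheses $n \geq 2$ and $q \geq 3$ that are already assumed in the corollary, provides exactly the claimed bound on $\alpha_2(G_\text{pr}(\Fq^n))$, and transporting it along the identity above gives precisely the displayed inequality for $A_q^\text{pr}(n,3)$.

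There is really no obstacle here; the only point that requires a brief mention is a sanity check that the hypotheses carry over cleanly, namely that the regime $n\geq 2$, $q\geq 3$ ensures that $G_\text{pr}(\Fq^n)$ has the at least three distinct eigenvalues needed to apply Theorem \ref{th: alpha_2} inside the proof of Theorem \ref{th: alpha2 q3 pr graph}, and that $\lfloor n/q \rfloor$ lies in the admissible index range $\{0,1,\dots,n-1\}$ so that $\theta_i$ is a genuine eigenvalue. Both checks are immediate from the distinct-eigenvalue list derived earlier, so the proof is essentially a one-line chaining of Theorem \ref{th: alpha2 q3 pr graph} and Lemma \ref{lemma: geo dist then max card alphak}.
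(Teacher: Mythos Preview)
Your proposal is correct and matches the paper's approach exactly: the paper simply states that the bound from Theorem~\ref{th: alpha2 q3 pr graph} is translated to an upper bound on $A_q^\text{pr}(n,3)$ via Lemma~\ref{lemma: geo dist then max card alphak}, which is precisely the chaining you describe.
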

A comparison of this upper bound from Corollary \ref{cor: pr alpha2} with the Singleton-type bound from Theorem \ref{th: singleton pr} gives the following result.

\begin{proposition}
Let $n \geq 2$, $q \geq 3$ but not $q=n=3$. The upper bound on $A_q^\text{pr}(n,3)$ in Equation \eqref{eq:pr alpha2}, which is a consequence of the Ratio-type bound, is no worse than the upper bound from the Singleton-type bound of Theorem \ref{th: singleton pr}.
\end{proposition}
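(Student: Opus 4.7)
The plan is to compare the Ratio-type bound from Corollary \ref{cor: pr alpha2} with the Singleton-type bound from Theorem \ref{th: singleton pr} by first splitting into two regimes according to which value the Singleton-type bound takes, and then reducing the comparison in each regime to a purely algebraic inequality in $n$, $q$, and $t := \lfloor n/q \rfloor$.

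First I would identify when the Singleton-type bound gives $q^{n-2}$ rather than $1$: the condition $3 < 1 + \lceil n - n/q\rceil$ simplifies to $n > 2 + 2/(q-1)$. For $q=3$ this requires $n \geq 4$; for $q \geq 4$ it requires $n \geq 3$. Combined with the exclusion $q=n=3$, the only remaining case in which the Singleton-type bound equals $1$ is $n=2$. In that case $t=0$, so the right-hand side of Equation \eqref{eq:pr alpha2} equals $q^{0}\cdot n(n+1)/((n)(n+1)) = 1$, matching the Singleton-type bound exactly.

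In the complementary regime, where the Singleton-type bound equals $q^{n-2}$, the desired inequality is equivalent to
\[
n(n+1) + tq(-2 - 2n + q + tq) \leq (n-t)(n+1-t).
\]
Expanding both sides, cancelling the common term $n(n+1)$, and factoring, this simplifies to
\[
t(q-1)\bigl((q-1) + t(q+1) - 2n\bigr) \leq 0.
\]
Since $t \geq 0$ and $q \geq 3$, it suffices to show that the bracket is nonpositive whenever $t \geq 1$. Observe that $t \geq 1$ forces $n \geq q$, and from $t \leq n/q$ one gets $t(q+1) \leq n + n/q$. The inequality $n + n/q \leq 2n - q + 1$ is equivalent to $(n-q)(q-1) \geq 0$, which holds precisely when $n \geq q$. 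Chaining these inequalities yields $(q-1) + t(q+1) \leq 2n$, completing the argument.

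The hard part is not the algebra itself but rather the careful bookkeeping: correctly delimiting the cases $n = 2$ versus $n \geq 3$, keeping track of when the Singleton-type bound is actually applicable, and ensuring that the excluded case $q = n = 3$ is the only genuine obstruction. Once the key reduction to $t(q-1)((q-1)+t(q+1)-2n) \leq 0$ is in place, the estimate $t \leq n/q$ handles all values of $t \geq 1$ uniformly, and the case $t = 0$ is trivial. No delicate eigenvalue manipulation is required beyond what is already packaged in Theorem \ref{th: alpha2 q3 pr graph} and Corollary \ref{cor: pr alpha2}.
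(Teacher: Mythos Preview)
Your proof is correct and takes a genuinely different route from the paper. The paper proceeds by splitting into the subcases $\lfloor n/q\rfloor=0,1,2$ and $\lfloor n/q\rfloor\ge 3$, simplifying the Ratio-type bound in each subcase, and in several places appealing to ``mathematical software'' to certify the resulting inequalities. By contrast, you reduce the comparison in the regime where the Singleton-type bound equals $q^{n-2}$ to the single factored inequality
\[
t(q-1)\bigl((q-1)+t(q+1)-2n\bigr)\le 0,
\]
and then dispatch it uniformly using $t\le n/q$ together with the observation that $t\ge 1$ forces $n\ge q$. This is cleaner: it avoids the case split on the value of $\lfloor n/q\rfloor$, requires no computer verification, and makes transparent why the excluded instance $q=n=3$ is the only obstruction (there $t=1$ but the Singleton-type bound is $1$ rather than $q^{n-2}$, so the algebraic reduction does not apply). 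The paper's approach, on the other hand, is more concrete in that it displays the explicit value of the Ratio-type bound in each subcase, which may be useful if one wants to quantify by how much the Ratio-type bound improves on Singleton.
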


\begin{proof}
The Singleton-type bound for $d=3$ is $q^{n-2}$ if $3<1+\lceil n - \tfrac{n}{q} \rceil$, which happens exactly if $n - \tfrac{n}{q} >2 \Leftrightarrow n > 2+\tfrac{2}{q-1}$. If $q=3$, then we need $n>3$, and if $q \geq 4$, then $n \geq 3$ suffices. In these cases, we prove that the upper bound from Equation \eqref{eq:pr alpha2} is at most $q^{n-2}$. 

If $n<q$, then $\lfloor \tfrac{n}{q} \rfloor = 0$ and the upper bound from the Ratio-type bound reduces to
\[
A_q^\text{pr}(n,3) \leq q^{n-2} \frac{n(n+1)}{n(n+1)} = q^{n-2}.
\]
This exactly equals the upper bound from the Singleton-type bound for $d=3$. 

If $q \leq n < 2q$, then $\lfloor \tfrac{n}{q} \rfloor = 1$ and the upper bound from the Ratio-type bound reduces to
\[
A_q^\text{pr}(n,3) \leq q^{n-2} \frac{n(n+1)+q(-2-2n+2q)}{(n-1)n}.
\]
It can be verified with mathematical software that this is less than or equal to $q^{n-2}$ when $n \geq 2$ and $q \leq n <2q$. So the desired result holds in this case.

If $2q \leq n <3q$, then $\lfloor \tfrac{n}{q} \rfloor = 2$ and the upper bound from the Ratio-type bound reduces to
\[
A_q^\text{pr}(n,3) \leq q^{n-2} \frac{n(n+1)+2q(-2-2n+3q)}{(n-2)(n-1)}.
\]
Mathematical software can show that this is less than or equal to $q^{n-2}$ if $n \geq 3$ and $2q \leq n <3q$. Since $n \geq 2q$ and $q \geq 3$, the condition $2q \leq n < 3q$ is actually sufficient. So also in this case the desired result is reached.

Lastly, we consider the last case $n \geq 3q$. We have $q \lfloor \tfrac{n}{q} \rfloor \leq n$, so $-2-2n+q+ q \lfloor \tfrac{n}{q} \rfloor \leq -2-2n+q+n = -2-n+q < 0$ since $n \geq 3q$. Also $q \lfloor \tfrac{n}{q} \rfloor \geq q \left( \tfrac{n-(q-1)}{q} \right) = n-q+1$ since $n,q$ are integral. Then
\[
\lfloor \tfrac{n}{q} \rfloor q(-2-2n+q+ \lfloor \tfrac{n}{q} \rfloor q) \leq (n-q+1)(-2-n+q).
\]
The bound from the Ratio-type bound is thus upper bounded by
\[
A_q^\text{pr}(n,3) \leq q^n \frac{n(n+1) +(n-q+1)(-2-n+q)}{(qn-n)(qn+q-n)} = q^n \frac{2+2n-q}{n(qn+q-n)}.
\]
This is less than or equal to $q^{n-2}$ if and only if
\[
q^2(2+2n-q) \leq n(qn+q-n),
\]
which can be seen to hold, using mathematical software, for $(n,q)=(9,3)$ or $n \geq 10$ and $3 \leq q \leq \frac{n}{3}$. Since $q\geq 3$, we have $n \geq 3q \geq 9$. If $n=9$, then $3 \leq q \leq \frac{n}{3} = 3$, so $q=3$ is the only option. If $n \geq 10$, then the desired inequality holds for $n \geq 3q$. All in all, we also get the desired result when $n \geq 3q$.

Next we consider the cases where the Singleton-type bound equals $1$. This happens if $q=3$, $n=2,3$ or $q \geq 4$, $n=2$. If $n=2$, then the bound from Equation \eqref{eq:pr alpha2} reduces to 1. If $q=3$ and $n=3$, then our bound reduces to 3. So $q=3$, $n=3$ is the only case in which the upper bound on $A_q^\text{pr}(n,3)$ from Equation \eqref{eq:pr alpha2} is worse than the upper bound from the Singleton-type bound of Theorem \ref{th: singleton pr}.
\end{proof}

Also for $k=2$ and $q \geq 3$ bounds for phase-rotation codes that are almost always at least as good as the Singleton-type bound can be obtained from the Ratio-type bound. Now consider the Ratio-type bound on the 3-independence number $\alpha_3$. Again the cases $q=2$ and $q \geq 3$ are treated separately. First the case $q=2$ (and $k=3$) is studied.

\begin{theorem} \label{alpha3 q2 pr graph}
Let $n \geq 5$. Then
\[
\alpha_3(G_\text{pr}(\mathbb{F}_2^n)) \leq \begin{cases}
2^{n-1} \frac{n^2-n+4}{n^2(n+4)} & \text{if } n \equiv 0 \mod 4, \\
2^{n-1} \frac{n-3}{(n-1)(n+3)} & \text{if } n \equiv 1 \mod 4, \\
2^{n-1} \frac{n-5}{(n+2)(n-2)} & \text{if } n \equiv 2 \mod 4, \\
2^{n-1} \frac{1}{n+1} & \text{if } n \equiv 3 \mod 4.
\end{cases}
\]
\end{theorem}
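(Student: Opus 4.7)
The plan is to apply the closed-form Ratio-type bound of Theorem~\ref{th: alpha3} to $G_\text{pr}(\mathbb{F}_2^n)$, using the full list of distinct adjacency eigenvalues $2i - n - 1$ derived above (so in particular $\theta_0 = n+1$). The only additional ingredients required are the constant $\Delta = \max_u (A^3)_{uu}$, the smallest eigenvalue $\theta_r$ (which is $1-n$ for $n$ even and $-n-1$ for $n$ odd), and the identification of $\theta_s$ and $\theta_{s+1}$ in each residue class of $n$ modulo $4$.

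First, I would compute $\Delta$. Since $G_\text{pr}(\mathbb{F}_2^n)$ is a Cayley graph over $\mathbb{F}_2^n$ with connecting set $S = \{\textbf{e}_1, \ldots, \textbf{e}_n, \textbf{1}\}$ and is walk-regular, $(A^3)_{vv}$ equals the number of ordered triples $(s_1, s_2, s_3) \in S^3$ with $s_1 + s_2 + s_3 = \textbf{0}$. A short case analysis by type of the $s_i$ --- three standard basis vectors, two basis vectors plus one copy of $\textbf{1}$, one basis vector plus two copies of $\textbf{1}$, or three copies of $\textbf{1}$ --- shows that no such triple exists for $n \geq 3$. Hence $\Delta = 0$, equivalently the graph is triangle-free.

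Next, I would identify $\theta_s$ and $\theta_{s+1}$. With $\Delta = 0$ the threshold in Theorem~\ref{th: alpha3} simplifies to $-\frac{\theta_0 + \theta_r}{\theta_r + 1}$, which equals $\frac{2}{n-2}$ for $n$ even and $0$ for $n$ odd. For $n \geq 5$ the former lies strictly between $0$ and $1$, so inspecting the eigenvalue list in each residue class yields
\[
(\theta_s, \theta_{s+1}) = (1,-3),\ (2,-2),\ (3,-1),\ (0,-4) \quad \text{for } n \equiv 0, 1, 2, 3 \pmod 4,
\]
respectively.

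Finally, I would substitute each quadruple $(\theta_0, \theta_s, \theta_{s+1}, \theta_r)$ into
\[
\alpha_3(G_\text{pr}(\mathbb{F}_2^n)) \leq 2^n \cdot \frac{-\theta_0(\theta_s + \theta_{s+1} + \theta_r) - \theta_s \theta_{s+1} \theta_r}{(\theta_0 - \theta_s)(\theta_0 - \theta_{s+1})(\theta_0 - \theta_r)}
\]
and simplify. The four arithmetic reductions are routine; common factors of $n+1$ (when $n \equiv 1 \pmod 4$) and of $(n+1)(n+5)$ (when $n \equiv 3 \pmod 4$) cancel between numerator and denominator, producing the compact expressions claimed in the statement. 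The main obstacle is establishing that $\Delta = 0$: without this reduction, the threshold defining $\theta_s$ would depend nontrivially on $n$ and the clean residue-class description would be lost. The rest is a parity-driven bookkeeping exercise.
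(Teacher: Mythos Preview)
Your proposal is correct and follows essentially the same approach as the paper's proof: apply Theorem~\ref{th: alpha3}, establish $\Delta = 0$ via triangle-freeness (the paper phrases this as ``no two distinct elements in $\mathbb{F}_q^*$'' while you count ordered triples in $S$ summing to $\mathbf{0}$, but these are the same argument), then split into the four residue classes modulo $4$ to determine $(\theta_s,\theta_{s+1})$ and substitute. The identification of the eigenvalue pairs and the resulting simplifications match the paper exactly.
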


\begin{proof}
Since $n \geq 5$, $G_\text{pr}(\mathbb{F}_2^n)$ has at least four distinct eigenvalues, so Theorem \ref{th: alpha3} is applicable. First we need to determine $\Delta = \max_{u \in V(G_\text{pr}(\mathbb{F}_2^n))} \{ (A^3)_{uu} \}$. Since $G_\text{pr}(\mathbb{F}_2^n)$ is walk-regular, the diagonal entries of $A^3$ are all the same, so $\Delta = (A^3)_{\textbf{00}}$. Now $\Delta$ is exactly two times the number of triangles in the graph that vertex \textbf{0} is part of. Since $n \geq 5$, vertex \textbf{0} can only be part of triangles where the vertices of the triangle differ in the same $F_i$. 
However, since $q=2$ there are no two distinct element in $\mathbb{F}_q^*$, so vertex $\textbf{0}$ is not part of any triangles, and $\Delta=0$.

We start with the case where $n$ is even, or $n \equiv 0,2 \mod 4$. Then
\[
\theta_0 = n+1, \quad \theta_r = 1-n,
\]
and $\theta_s$ is the smallest eigenvalue $\geq -\frac{\theta_0^2+\theta_0\theta_r - \Delta}{\theta_0(\theta_r+1)}$. Now
\[
\frac{\theta_0^2+\theta_0\theta_r - \Delta}{\theta_0(\theta_r+1)} = \frac{(n+1)^2+(n+1)(1-n)}{(n+1)(2-n)} = \frac{2}{2-n}.
\]
So $\theta_s:=2i-n-1$ is the smallest eigenvalue $\geq \frac{2}{n-2}$. Then
\[
2i-n-1 \geq \frac{2}{2-n} \Leftrightarrow 2i \geq n+1+\frac{2}{n-2} \Leftrightarrow i \geq \frac{n+1}{2} + \frac{1}{n-2}.
\]
Since $n \geq 5$, $\frac{1}{n-2} \leq \frac{1}{3} < \frac{1}{2}$. Since $i$ also has to be integral, we get $i \geq \frac{n+1}{2} + \frac{1}{2} = \frac{n}{2} +1$.
Since $i$ has to be odd for $2i-n-1$ to be an eigenvalue when $n$ is even, we get $i = \frac{n}{2}+1$ if $n \equiv 0 \mod 4$ and $i = \frac{n}{2}+2$ if $n \equiv 2 \mod 4$. If $n \equiv 0 \mod 4$, we have $\theta_s = 1, \theta_{s+1} = -3$. Then
\[
\alpha_3(G_\text{pr}(\mathbb{F}_2^n)) \leq 2^n \frac{-(n+1)(1-3+1-n)-(-3)(1-n)}{(n+1-1)(n+1+3)(n+1-1+n)} = 2^{n-1} \frac{n^2-n+4}{n^2(n+4)}.
\]
If $n \equiv 2 \mod 4$, we have $\theta_s = 3, \theta_{s+1} = -1$. Then
\[
\alpha_3(G_\text{pr}(\mathbb{F}_2^n)) \leq 2^n \frac{-(n+1)(3-1+1-n) - 3(-1)(1-n)}{(n+1-3)(n+1+1)(n+1-1+n)} = 2^{n-1} \frac{n-5}{(n+2)(n-2)}.
\]

Next we deal with the case where $n$ is odd, or $n \equiv 1,3 \mod 4$. Then
\[
\theta_0 = n+1, \quad \theta_r = -n-1,
\]
and $\theta_s$ is the smallest eigenvalue $\geq -\frac{\theta_0^2+\theta_0\theta_r - \Delta}{\theta_0(\theta_r+1)}$. Now
\[
\frac{\theta_0^2+\theta_0\theta_r - \Delta}{\theta_0(\theta_r+1)} = \frac{(n+1)^2+(n+1)(-n-1)}{(n+1)(-n)} = 0.
\]
So $\theta_s:=2i-n-1$ is the smallest eigenvalue $\geq 0$. Then
\[
2i-n-1 \geq 0 \Leftrightarrow 2i \geq n+1 \Leftrightarrow i \geq \frac{n+1}{2}.
\]
Since $i$ has to be even for $2i-n-1$ to be an eigenvalue when $n$ is odd, we get $i = \frac{n}{2}+1$ if $n \equiv 1 \mod 4$ and $i = \frac{n+1}{2}$ if $n \equiv 3 \mod 4$. If $n \equiv 1 \mod 4$, we have $\theta_s = 2, \theta_{s+1} = -2$. Then
\[
\alpha_3(G_\text{pr}(\mathbb{F}_2^n)) \leq 2^n \frac{-(n+1)(2-2-n-1)-2(-2)(-n-1)}{(n+1-2)(n+1+2)(n+1+n+1)} = 2^{n-1} \frac{n-3}{(n-1)(n+3)}.
\]
If $n \equiv 3 \mod 4$, we have $\theta_s = 0, \theta_{s+1} = -4$. Then
\begin{align*}
&\alpha_3(G_\text{pr}(\mathbb{F}_2^n)) \leq 2^n \frac{-(n+1)(0-4-n-1)-0(-4)(-n-1)}{(n+1)(n+1+4)(n+1+n+1)} = \frac{2^{n-1}}{n+1}.
\qedhere\end{align*}
\end{proof}

The upper bounds from Theorem \ref{alpha3 q2 pr graph} can be translated to upper bounds on $A^\text{pr}_2(n,4)$ via Lemma \ref{lemma: geo dist then max card alphak}. 
\begin{corollary} \label{cor: pr alpha3 q2}
The maximum cardinality of phase-rotation codes in $\mathbb{F}_2^n$ of minimum distance 4 with $n \geq 5$ is upper bounded by

\begin{equation} \label{eq:pr alpha3 q2}
A_2^\text{pr}(n,4) \leq \begin{cases}
2^{n-1} \frac{n^2-n+4}{n^2(n+4)} & \text{if } n \equiv 0 \mod 4, \\
2^{n-1} \frac{n-3}{(n-1)(n+3)} & \text{if } n \equiv 1 \mod 4, \\
2^{n-1} \frac{n-5}{(n+2)(n-2)} & \text{if } n \equiv 2 \mod 4, \\
2^{n-1} \frac{1}{n+1} & \text{if } n \equiv 3 \mod 4.
\end{cases}
\end{equation}
\end{corollary}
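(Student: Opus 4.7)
The plan is to derive the corollary as a direct consequence of two previously established results in the excerpt, namely Lemma \ref{lemma: geo dist then max card alphak} and Theorem \ref{alpha3 q2 pr graph}. Since the corollary merely re-expresses an upper bound on the $3$-independence number of the phase-rotation distance graph as an upper bound on the size of a code with minimum distance $4$, the argument is essentially a one-line translation via the dictionary between codes and independent sets in power graphs that was built up in Section \ref{sec:evmethod}.

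More precisely, the first step is to invoke the corollary (proved earlier in Section \ref{sec: phase rotation}) stating that the geodesic distance in $G_\text{pr}(\mathbb{F}_2^n)$ coincides with the phase-rotation distance $d_\text{pr}$ on $\mathbb{F}_2^n$. This verifies the hypothesis of Lemma \ref{lemma: geo dist then max card alphak} for the discrete metric space $(\mathbb{F}_2^n, d_\text{pr})$. Applying that lemma with $d' = 4$ then gives the exact identity
\[
A_2^\text{pr}(n,4) \;=\; \alpha_{d'-1}\bigl(G_\text{pr}(\mathbb{F}_2^n)\bigr) \;=\; \alpha_3\bigl(G_\text{pr}(\mathbb{F}_2^n)\bigr).
\]

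The second step is to substitute the four case-by-case upper bounds on $\alpha_3(G_\text{pr}(\mathbb{F}_2^n))$ established in Theorem \ref{alpha3 q2 pr graph} (which required $n \geq 5$, matching the hypothesis of the corollary) into the right-hand side of the identity above. This immediately produces the four cases $n \equiv 0,1,2,3 \pmod 4$ in \eqref{eq:pr alpha3 q2}, with the same numerators and denominators, and the proof is complete.

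There is no real obstacle here: the only thing that could go wrong would be a mismatch in hypotheses. The condition $n \geq 5$ is imposed precisely so that $G_\text{pr}(\mathbb{F}_2^n)$ has at least four distinct eigenvalues and Theorem \ref{th: alpha3} applies, and the geodesic-distance condition has already been checked independently of $n$. Hence the proof is a short two-sentence deduction citing Lemma \ref{lemma: geo dist then max card alphak} and Theorem \ref{alpha3 q2 pr graph}.
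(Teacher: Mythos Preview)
Your proposal is correct and matches the paper's own approach exactly: the paper states just before the corollary that ``The upper bounds from Theorem \ref{alpha3 q2 pr graph} can be translated to upper bounds on $A^\text{pr}_2(n,4)$ via Lemma \ref{lemma: geo dist then max card alphak},'' and then states the corollary without further proof. Your two-step deduction (geodesic distance equals phase-rotation distance, then apply Lemma \ref{lemma: geo dist then max card alphak} with $d'=4$ and plug in Theorem \ref{alpha3 q2 pr graph}) is precisely this translation.
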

A comparison of these upper bounds from Corollary \ref{cor: pr alpha3 q2} to the Singleton-type bound from Theorem \ref{th: singleton pr} follows next.

\begin{proposition}
Let $n \geq 5$. The upper bounds on $A_2^\text{pr}(n,4)$ in Equation \eqref{eq:pr alpha3 q2}, which are a result of the Ratio-type bound, are no worse than the upper bound from the Singleton-type bound of Theorem \ref{th: singleton pr}.
\end{proposition}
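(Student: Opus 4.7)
The plan is to first compute the Singleton-type upper bound from Theorem \ref{th: singleton pr} explicitly for the parameters $q=2$ and $d=4$. The condition $d<1+\lceil n-n/q\rceil$ becomes $4<1+\lceil n/2\rceil$, which is equivalent to $n\geq 7$. Hence the Singleton-type bound gives $|\mathcal{C}|\leq 2^{n-3}$ when $n\geq 7$ and $|\mathcal{C}|\leq 1$ when $n=5,6$. The proof then splits naturally into the main range $n\geq 7$ and the two residual cases $n=5,6$.

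For $n\geq 7$, I would split according to the residue of $n$ modulo $4$ and, in each of the four cases, divide both sides of the desired inequality by $2^{n-3}$, reducing to a polynomial inequality in $n$. Specifically, for $n\equiv 0\pmod 4$ one obtains $n^2(n+4)\geq 4(n^2-n+4)$, i.e.\ $n^3+4n-16\geq 0$; for $n\equiv 1\pmod 4$ one obtains $(n-1)(n+3)\geq 4(n-3)$, i.e.\ $(n-1)^2+8\geq 0$; for $n\equiv 2\pmod 4$ one obtains $n^2-4\geq 4(n-5)$, i.e.\ $(n-2)^2+12\geq 0$; and for $n\equiv 3\pmod 4$ one obtains $n+1\geq 4$. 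Each of these is immediate for $n\geq 7$ (indeed for much smaller $n$), so no further analysis is needed.

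For the boundary cases $n=5$ (which is $\equiv 1\pmod 4$) and $n=6$ (which is $\equiv 2\pmod 4$), I would plug directly into the expressions of Corollary \ref{cor: pr alpha3 q2}. For $n=5$ the Ratio-type bound evaluates to $2^{4}\cdot\frac{2}{4\cdot 8}=1$, and for $n=6$ it evaluates to $2^{5}\cdot\frac{1}{8\cdot 4}=1$. Both match the Singleton-type value of $1$ exactly, so the claimed inequality holds (with equality) in these cases.

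No step here is a genuine obstacle; the argument is essentially a clean case analysis followed by elementary polynomial estimates. The only mild care required is in bookkeeping the four modular cases and checking that the small-$n$ boundary values $n=5,6$ are not swept under the rug by the $\lceil\cdot\rceil$ in the Singleton condition. Once these are handled, the proof is complete.
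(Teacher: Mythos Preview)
Your proposal is correct and follows essentially the same approach as the paper: both identify that the Singleton-type bound equals $2^{n-3}$ for $n\geq 7$ and $1$ for $n=5,6$, then perform the identical four-way case split modulo $4$ reducing to the same polynomial inequalities $n^3+4n\geq 16$, $(n-1)^2+8\geq 0$, $(n-2)^2+12\geq 0$, and $n\geq 3$, and finally check the boundary values $n=5,6$ by direct substitution to obtain $1$ in each case.
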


\begin{proof}
The upper bound of the Singleton-type bound for $d=4$ and $q=2$ is $2^{n-3}$ if $4<1+\lceil n - \tfrac{n}{2} \rceil$, which is exactly if $\tfrac{n}{2} > 3 \Leftrightarrow n >6$. In this case we compare the bounds and see when the bounds from Equation \eqref{eq:pr alpha3 q2} are smaller than or equal to $2^{n-3}$. For $n \equiv 0 \mod 4$ we have:
\[
2^{n-1} \frac{n^2-n+4}{n^2(n+4)} \leq 2^{n-3} \Leftrightarrow 2^2(n^2-n+4) \leq n^2(n+4) \Leftrightarrow n^3+4n \geq 16,
\]
which is true since $n \geq 5$. For $n \equiv 1 \mod 4$:
\[
2^{n-1} \frac{n-3}{(n-1)(n+3)} \leq 2^{n-3} \Leftrightarrow 2^2(n-3) \leq (n-1)(n+3) \Leftrightarrow n^2-2n+9 = (n-1)^2+8 \geq 0.
\]
Also this is true. For $n \equiv 2 \mod 4$:
\[
2^{n-1} \frac{n-5}{(n+2)(n-2)} \leq 2^{n-3} \Leftrightarrow 2^2(n-5) \leq (n-2)(n+2) \Leftrightarrow n^2-4n+16 = (n-2)^2+12 \geq 0,
\]
which is true. Lastly, for $n \equiv 3 \mod 4$ we get:
\[
2^{n-1} \frac{1}{n+1} \leq 2^{n-3} \Leftrightarrow 2^2 \leq n+1 \Leftrightarrow n \geq 3,
\]
which is true by assumption on $n$. 

In the cases that the Singleton-type bound equals $1$, which is if $n=5,6$, the bounds from Equation \eqref{eq:pr alpha3 q2} give values of $2^4 \cdot \tfrac{2}{32} = 1$ and $2^5 \cdot \tfrac{1}{32} = 1$ for $n=5,6$ respectively. Hence the upper bounds on $A_2^\text{pr}(n,4)$ from Equation \eqref{eq:pr alpha3 q2} are no worse than the Singleton-type upper bound from Theorem \ref{th: singleton pr}.
\end{proof}

So the Ratio-type bound gives upper bounds on the size of phase-rotation codes that perform no worse than the Singleton-type bound for $k=3$ and $q=2$. Now we consider the Ratio-type bound for $k=3$ and $q \geq 3$. 

\begin{theorem}\label{th: alpha3 q3 pr graph}
Let $n \geq 3$ and $q \geq 3$. Then
\[
\alpha_3(G_\text{pr}(\Fq^n)) \leq q^n \frac{n(n+2q-1)+q \lceil \tfrac{n-1}{q} \rceil \big(-2n-q+q \lceil \tfrac{n-1}{q} \rceil \big)}{q^3 \big( n+\lfloor \tfrac{1-n}{q} \rfloor \big) \big( n+1+ \lfloor \tfrac{1-n}{q} \rfloor \big)}.
\]
\end{theorem}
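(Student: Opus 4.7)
The plan is to apply the closed-form Ratio-type bound of Theorem \ref{th: alpha3} to the graph $G_\text{pr}(\Fq^n)$, whose distinct eigenvalues are $iq - n - 1$ for $i \in \{0,1,\ldots,n-1,n+1\}$ by the Corollary preceding Proposition \ref{prop: eigval pr graph}. In particular $\theta_0 = (n+1)(q-1)$ and $\theta_r = -n-1$. The total number of vertices is $q^n$, and by Corollary \ref{cor: proj reg walk-r vertex} the graph is walk-regular, so $\Delta = (A^3)_{uu} = (A^3)_{\mathbf{0}\mathbf{0}}$ is the same for every vertex.

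First I would compute $\Delta$ by counting closed walks of length $3$ from $\mathbf{0}$, which equals twice the number of triangles through $\mathbf{0}$. If $u = c_1 f$ and $v = c_2 g$ are neighbors of $\mathbf{0}$ with $f,g \in \{\mathbf{e}_1,\ldots,\mathbf{e}_n,\mathbf{1}\}$ and $c_1,c_2 \in \Fq^*$, a direct weight computation shows that $u - v$ has phase-rotation weight $1$ if and only if $f = g$ and $c_1 \neq c_2$; the case $f = \mathbf{e}_i$, $g = \mathbf{1}$ is ruled out by the hypothesis $n \geq 3$, since the difference would have $n - 1 \geq 2$ equal nonzero coordinates and could be neither a multiple of an $\mathbf{e}_k$ nor of $\mathbf{1}$. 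Hence the triangles through $\mathbf{0}$ are indexed by a choice of direction $f$ (there are $n+1$) and an unordered pair of distinct elements of $\Fq^*$, giving $\Delta = 2(n+1)\binom{q-1}{2} = (n+1)(q-1)(q-2)$.

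Next I would simplify the threshold of Theorem \ref{th: alpha3}. A routine substitution using $\theta_0^2 + \theta_0\theta_r = (n+1)^2(q-1)(q-2)$ yields
\[
-\frac{\theta_0^2+\theta_0\theta_r-\Delta}{\theta_0(\theta_r+1)} = -\frac{n(n+1)(q-1)(q-2)}{-n(n+1)(q-1)} = q-2.
\]
Setting $j := \lceil (n-1)/q\rceil$, the smallest integer $i$ with $iq-n-1 \geq q-2$ is $i = j+1$. A short check using $n \geq 3$ and $q \geq 3$ shows $2 \leq j+1 \leq n-1$, so $i = j+1$ is an admissible index (avoiding the gap at $i=n$), and $\theta_s = (j+1)q - n - 1$, $\theta_{s+1} = jq - n - 1$.

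It then remains to substitute these values into Theorem \ref{th: alpha3} with $n$ there replaced by the vertex count $q^n$. The denominator factors as
\[
(\theta_0-\theta_s)(\theta_0-\theta_{s+1})(\theta_0-\theta_r) = q(n-j)\cdot q(n-j+1)\cdot (n+1)q = q^3(n+1)(n-j)(n-j+1),
\]
and after factoring $(n+1)$ out of $\Delta - \theta_0(\theta_s+\theta_{s+1}+\theta_r) - \theta_s\theta_{s+1}\theta_r$ the remaining bracket simplifies to $n(n-1) - 2nq(j-1) + q^2 j(j-1) = n(n+2q-1) + qj(-2n-q+qj)$, so the $(n+1)$ factors cancel. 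Using the identity $\lfloor (1-n)/q\rfloor = -\lceil (n-1)/q\rceil = -j$ produces exactly the stated bound. The main obstacle is the $\Delta$ computation, in particular justifying rigorously that no triangle through $\mathbf{0}$ mixes a coordinate direction $\mathbf{e}_i$ with the all-ones direction $\mathbf{1}$; the remaining steps are bookkeeping.
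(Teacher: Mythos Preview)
Your proposal is correct and follows essentially the same approach as the paper: apply Theorem~\ref{th: alpha3}, compute $\Delta=(n+1)(q-1)(q-2)$ by counting triangles through $\mathbf{0}$ (using $n\geq 3$ to rule out mixed directions), obtain the threshold $q-2$, identify $\theta_s$ and $\theta_{s+1}$ via $i=1+\lceil(n-1)/q\rceil$, and simplify. Your write-up is in fact slightly more explicit than the paper's in two places: you verify that $j+1\leq n-1$ so the gap at $i=n$ is avoided, and you show the $(n+1)$ cancellation between numerator and denominator rather than just stating the final simplified form. (One minor slip: the corollary listing the distinct eigenvalues comes \emph{after} Proposition~\ref{prop: eigval pr graph}, not before it.)
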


\begin{proof}
The eigenvalues of $G_\text{pr}(\Fq^n)$ for $n\geq 3$ and $q \geq 3$ are $iq-n-1$ for $i=0,1, \ldots, n-1,n+1$. Since $n\geq 3$, there are at least four distinct eigenvalues. Since $G_\text{pr}(\Fq^n)$ is regular, Theorem \ref{th: alpha3} is applicable. First we need to determine $\Delta = \max_{u \in V(G_\text{pr}(\Fq^n))} \{ (A^3)_{uu} \}$. Similarly to the previous proof, $\Delta = (A^3)_{\textbf{00}}$, which equals two times the number of triangles that vertex $\textbf{0}$ is part of. 
Again since $n \geq 3$, vertex \textbf{0} is only part of triangles where the vertices of the triangle differ in the same $F_i$. 
Then \textbf{0} is part of $(n+1) {q-1 \choose 2}$ triangles since there are $n+1$ $F_i$'s and ${q-1 \choose 2}$ ways to choose two different elements in $\Fq^*$. So $\Delta = 2(n+1){q-1 \choose 2} = (n+1)(q-1)(q-2)$. Using the notation of Theorem \ref{th: alpha3} we have
\[
\theta_0=(n+1)(q-1), \quad \theta_r = -n-1,
\]
and $\theta_s$ is the smallest eigenvalue $\geq - \frac{\theta_0^2+\theta_0\theta_r - \Delta}{\theta_0(\theta_r+1)}$. Now 
\[
\frac{\theta_0^2+\theta_0\theta_r - \Delta}{\theta_0(\theta_r+1)} = \frac{(n+1)^2(q-1)^2 + (n+1)(q-1)(-n-1) - (n+1)(q-1)(q-2)}{(n+1)(q-1) \cdot -n} = 2-q.
\]
So $\theta_s:=iq-n-1$ is the smallest eigenvalue $\geq q-2$. Then
\[
iq-n-1 \geq q-2 \Leftrightarrow (i-1)q \geq n-1 \Leftrightarrow i \geq 1+ \frac{n-1}{q}.
\]
Since $i$ has to be equal to one of the integers $0,1, \ldots, n-1,n+1$, take $i = 1+ \lceil \frac{n-1}{q} \rceil$, which is a positive integer and at most $n-1$. Then 
\[
\theta_s = (1+ \lceil \tfrac{n-1}{q} \rceil)q-n-1, \theta_{s+1} = \lceil \tfrac{n-1}{q} \rceil q-n-1.
\]
Now we obtain the following upper bound for $\alpha_3(G_\text{pr}(\Fq^n))$:
{\scriptsize
\[
q^n \Biggl( \frac{(n+1)(q-1)(q-2) - (n+1)(q-1) ((1+ \lceil \tfrac{n-1}{q} \rceil)q-n-1 +  \lceil \tfrac{n-1}{q} \rceil q-n-1 -n-1) }{((n+1)(q-1) - ((1+ \lceil \tfrac{n-1}{q} \rceil)q-n-1))((n+1)(q-1) - ( \lceil \tfrac{n-1}{q} \rceil q-n-1))((n+1)(q-1) +n+1)} 
\]
\[
- \frac{((1+ \lceil \tfrac{n-1}{q} \rceil)q-n-1) ( \lceil \tfrac{n-1}{q} \rceil q-n-1)(-n-1)}{((n+1)(q-1) - ((1+ \lceil \tfrac{n-1}{q} \rceil)q-n-1))((n+1)(q-1) - ( \lceil \tfrac{n-1}{q} \rceil q-n-1))((n+1)(q-1) +n+1)}  \Biggl)
\]
\[
q^n \frac{n(n+2q-1)+q \lceil \tfrac{n-1}{q} \rceil \big(-2n-q+q \lceil \tfrac{n-1}{q} \rceil \big)}{q^3 \big( n+\lfloor \tfrac{1-n}{q} \rfloor \big) \big( n+1+ \lfloor \tfrac{1-n}{q} \rfloor \big)}
\]}
\end{proof}

The upper bound from Theorem \ref{th: alpha3 q3 pr graph} can be translated to upper bounds on $A_q^\text{pr}(n,4)$ via Lemma \ref{lemma: geo dist then max card alphak}. 
\begin{corollary} \label{cor: alpha 3 pr}
The maximum cardinality of phase-rotation codes in $\Fq^n$ of minimum distance 4 with $n \geq 3$ and $q \geq 3$ is upper bounded by
\begin{equation} \label{eq: alpha 3 pr}
A_q^\text{pr}(n,4) \leq q^n \frac{n(n+2q-1)+q \lceil \tfrac{n-1}{q} \rceil \big(-2n-q+q \lceil \tfrac{n-1}{q} \rceil \big)}{q^3 \big( n+\lfloor \tfrac{1-n}{q} \rfloor \big) \big( n+1+ \lfloor \tfrac{1-n}{q} \rfloor \big)}.
\end{equation}
\end{corollary}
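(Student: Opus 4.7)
The plan is to obtain Corollary \ref{cor: alpha 3 pr} as an essentially immediate consequence of two results already established in the excerpt: Theorem \ref{th: alpha3 q3 pr graph}, which bounds $\alpha_3(G_\text{pr}(\Fq^n))$ for $n\geq 3$ and $q\geq 3$, and Lemma \ref{lemma: geo dist then max card alphak}, which translates $k$-independence numbers of the distance graph into maximum cardinalities of codes, provided the geodesic distance in the graph agrees with the underlying metric.

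First I would invoke the corollary stated just after Lemma \ref{lemma: proj geodesic dist}, which specializes that lemma to the phase-rotation setting and guarantees that the geodesic distance in $G_\text{pr}(\Fq^n)$ coincides with the phase-rotation distance $d_\text{pr}$. This verifies the hypothesis of Lemma \ref{lemma: geo dist then max card alphak}. Applying that lemma with $d' = 4$ then yields the identity
\[
A_q^\text{pr}(n,4) \;=\; \alpha_{d'-1}(G_\text{pr}(\Fq^n)) \;=\; \alpha_{3}(G_\text{pr}(\Fq^n)).
\]
The assumptions $n\geq 3$ and $q\geq 3$ of the corollary match exactly the hypotheses of Theorem \ref{th: alpha3 q3 pr graph}, so substituting the explicit upper bound from that theorem into the right-hand side of the above equality produces the stated inequality.

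Since the heavy lifting (computing the eigenvalues in Proposition \ref{prop: eigval pr graph}, controlling $\Delta$, and choosing the optimal $\theta_s$) is already done in Theorem \ref{th: alpha3 q3 pr graph}, there is no genuine obstacle here; the only thing to check is that the formula is reproduced verbatim and that the side-condition $r\geq 3$ from Theorem \ref{th: alpha3} holds, which it does because for $q\geq 3$ the graph $G_\text{pr}(\Fq^n)$ has $n+1\geq 4$ distinct eigenvalues as listed in the corollary following Proposition \ref{prop: eigval pr graph}. I would therefore write the proof as a two-sentence argument: cite the geodesic-distance corollary to apply Lemma \ref{lemma: geo dist then max card alphak}, and then cite Theorem \ref{th: alpha3 q3 pr graph} to obtain the displayed bound.
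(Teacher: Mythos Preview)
Your proposal is correct and matches the paper's approach exactly: the paper simply states that the upper bound from Theorem \ref{th: alpha3 q3 pr graph} is translated to an upper bound on $A_q^{\text{pr}}(n,4)$ via Lemma \ref{lemma: geo dist then max card alphak}, which is precisely the two-step argument you outline. Your additional remark verifying the $r\geq 3$ hypothesis of Theorem \ref{th: alpha3} is a nice sanity check, but it is already handled inside the proof of Theorem \ref{th: alpha3 q3 pr graph} itself.
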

We compare this upper bound from Corollary \ref{cor: alpha 3 pr} with the Singleton-type upper bound from Theorem \ref{th: singleton pr}.

\begin{proposition} \label{prop: Aq(n,4) pr}
Let $n \geq 3$, $q\geq 3$ but not $(n,q)= (4,3)$ or $(n,q)= (4,4)$. The upper bound on $A_q^\text{pr}(n,4)$ in Equation \eqref{eq: alpha 3 pr}, which is obtained from the Ratio-type bound, is no worse than the upper bound from the Singleton-type bound of Theorem \ref{th: singleton pr}.
\end{proposition}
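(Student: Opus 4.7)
The plan is to mirror the structure of the proof of the preceding proposition on $A_q^\text{pr}(n,3)$. First I would split on whether the Singleton-type bound from Theorem \ref{th: singleton pr} yields the nontrivial value $q^{n-3}$ or collapses to $1$. The nontrivial regime corresponds to $4 < 1+\lceil n - n/q\rceil$, i.e.\ $n > 3 + 3/(q-1)$, which holds automatically whenever $q\geq 5$ and $n \geq 4$, or $q\in\{3,4\}$ and $n\geq 5$. The complementary regime leaves the small cases $(n,q)\in\{(3,q):q\geq 3\}\cup\{(4,3),(4,4)\}$, of which the last two are the exclusions in the statement.

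For the main regime, I would use $\lfloor (1-n)/q\rfloor = -\lceil (n-1)/q\rceil$ and set $m := \lceil (n-1)/q\rceil$ to rewrite Equation \eqref{eq: alpha 3 pr} as
\[
q^n \cdot \frac{n(n+2q-1) + qm(qm - 2n - q)}{q^3 (n-m)(n+1-m)}.
\]
The target inequality $\text{(bound)} \leq q^{n-3}$ then reduces to
\[
n(n+2q-1) + qm(qm - 2n - q) \;\leq\; (n-m)(n+1-m).
\]
Expanding both sides and collecting terms, I expect every surviving term to carry a factor $(q-1)$, after which the inequality should factor as
\[
(m-1)\bigl(2n - m(q+1)\bigr) \;\geq\; 0.
\]
This identity is the core algebraic step and is where I anticipate being most careful with signs; once established, the rest is routine.

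To finish the main regime I would verify this factored inequality directly. Since $n \geq 3$ and $q \geq 3$ we have $n-1\geq 2$ and hence $m \geq 1$ always. If $m=1$ the product is $0$. If $m \geq 2$, then by definition of ceiling $n \geq (m-1)q + 1$, so $2n \geq 2(m-1)q + 2$, and the inequality $2(m-1)q + 2 \geq m(q+1)$ rearranges to $(m-2)(q-1)\geq 0$, which holds.

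It remains to handle the small cases where the Singleton-type bound equals $1$. For $n=3$ and arbitrary $q\geq 3$ one has $m = \lceil 2/q\rceil = 1$, and a direct substitution into \eqref{eq: alpha 3 pr} gives numerator $3(2q+2) - 6q = 6$ and denominator $6q^3/q^n$, evaluating the bound to $1$ and matching Singleton. The only remaining cases, $(n,q)=(4,3)$ and $(n,q)=(4,4)$, are precisely the ones excluded from the statement; a quick numerical evaluation of \eqref{eq: alpha 3 pr} shows the bound returns $3$ and $4$ respectively there, which explains why these cases must be excluded and also confirms that no further cases are lost. The main obstacle is confirming the clean factorization in the displayed inequality above; a sign slip there would force an unpleasant case analysis, so I would double-check that reduction before running the rest of the argument.
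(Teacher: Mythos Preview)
Your argument is correct, and its overall architecture matches the paper's: split on whether Singleton returns $q^{n-3}$ or $1$, substitute $m=\lceil (n-1)/q\rceil$ and $\lfloor(1-n)/q\rfloor=-m$, reduce the comparison in the main regime to
\[
n(n+2q-1)+qm(qm-2n-q)\;\leq\;(n-m)(n+1-m),
\]
and dispatch the finitely many remaining $(n,q)$ by direct evaluation. The difference lies in how this inequality is handled. The paper stops at the displayed inequality and states that it ``can be shown to hold, using mathematical software, when $n\geq 3$, $q\geq 3$, and $m-1<(n-1)/q\leq m$.'' You instead expand, observe that every surviving term carries a factor $(q-1)$, and obtain the clean factorization $(m-1)\bigl(2n-m(q+1)\bigr)\geq 0$, which you then check by hand via $(m-2)(q-1)\geq 0$. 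This is a genuine improvement: it replaces an opaque computer-verified step with a transparent two-line argument, and as a bonus the factorization pinpoints exactly when the Ratio-type bound coincides with Singleton (namely $m=1$, i.e.\ $n\leq q$). Your treatment of the degenerate Singleton$=1$ cases and the numerical check at $(4,3),(4,4)$ agrees with the paper's. One cosmetic remark: from $m-1<(n-1)/q$ with integer sides you actually get $n\geq (m-1)q+2$, slightly stronger than the $n\geq(m-1)q+1$ you quote, though the weaker bound already suffices.
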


\begin{proof}
The upper bound from the Singleton-type bound for $d=4$ is $q^{n-4+1}=q^{n-3}$ if $4<1+\lceil n - \tfrac{n}{q} \rceil$. This is exactly when $n-\tfrac{n}{q} > 3 \Leftrightarrow n>3+\tfrac{3}{q-1}$. If $q=3,4$, then we need $n \geq 5$, and if $q \geq5$, then $n \geq 4$ suffices. We consider these cases first. Define $m:= \lceil \tfrac{n-1}{q} \rceil$. Then $m-1 < \tfrac{n-1}{q} \leq m$ by definition of $m$. Since $\lfloor \tfrac{1-n}{q} \rfloor = -\lceil \tfrac{n-1}{q} \rceil = -m$, the upper bound from Equation \eqref{eq: alpha 3 pr} becomes:
\[
q^n \frac{n(n+2q-1)+q m (-2n-q+q m)}{q^3(n-m)(n+1-m)}.
\]
The latter is less than or equal to the Singleton-type upper bound if
\[
\frac{n(n+2q-1)+q m (-2n-q+q m)}{(n-m)(n+1-m)} \leq 1
\]
\[
\Leftrightarrow n(n+2q-1)+qm(-2n-q+qm) \leq (n-m)(n+1-m)
\]
\[
\Leftrightarrow 2qn-n-2q m n -q^2m+q^2 m^2 \leq n -2mn -m +m^2.
\]
Since $m,n,q$ are integral, the latter inequality can be shown to hold, using mathematical software, when $n \geq 3, q \geq 3$, and $\smash{m-1 < \tfrac{n-1}{q} \leq m}$.
Note that by definition of $m$, we have $\smash{m-1 < \tfrac{n-1}{q} \leq m}$. The conditions on $n$ and $q$ hold by the given assumptions on $n$ and $q$.

Next we consider the cases $q=3,4$, $n=3,4$ and $q \geq 5$, $n=3$. Now the Singleton-type bound gives an upper bound of $1$. If $n=3$, the upper bound from Equation \eqref{eq: alpha 3 pr} reduces to $1$. However for $n=4$, $q=3,4$, the upper bound of Equation \eqref{eq: alpha 3 pr} reduces to $q$, which is not less than or equal to $1$. That finishes the proof.
\end{proof}


So upper bounds on the size of phase-rotation codes obtained via the Ratio-type bound almost always perform no worse than the Singleton-type bound for $k=3$ and $q \geq 3$. 

We have shown theoretically that, for the phase-rotation metric, the Ratio-type bound performs no worse than the Singleton-type bound in most cases when the minimum distance is small, i.e. $d=2,3,4$, and $n$ is large enough. Next we provide some computational results for larger values of the minimum distance. 
Consider all graphs $G_\text{pr}(\Fq^n)$ with $n \geq 2$, $q$ a prime power and at most 1000 vertices, and consider $k=1,\ldots, \lceil \tfrac{q-1}{q}n \rceil-1$. We compare the Inertia-type bound, the Ratio-type bound, and the Singleton-type bound in these instances. Note that the graph $G_\text{pr}(\Fq^n)$ is not explicitly constructed, but its eigenvalues are calculated using Proposition \ref{prop: eigval pr graph}. This implies that a comparison of the upper bounds to the actual value of the $k$-independence number is not possible. For all the considered instances where moreover $d<1+\lceil n -\tfrac{n}{q} \rceil$, the Ratio-type bound performs no worse than the Singleton-type bound. In some instances, like $n=5,6,q=3,k=3$ and $n=9,q=2,k=3,4$, the Ratio-type bound improves on the Singleton-type upper bound. There are also some improvements with the Inertia-type bound compared to the Singleton-type. If $n=6,q=2,k=1$ or $n=8, q=2, k=1,3$, the Inertia-type bound performs better than the Singleton-type bound and the Ratio-type bound.

In what follows, we show some more results for the Inertia-type bound and the Ratio-type bound. This time the graph $G_\text{pr}(\Fq^n)$ is explicitly constructed, so the upper bounds on the $k$-independence number can be compared to the true $k$-independence number. The results for 
\[
n=2,3,4, \quad q=2,3,4,5, \quad k=1,\ldots,  \lceil \tfrac{q-1}{q}n \rceil-1,
\]
\[
\text{and } n=5, \quad q=2,3, \quad k=1,\ldots,  \lceil \tfrac{q-1}{q}n \rceil-1
\]
can be seen in Table \ref{tab: phase rotation results}. The columns ``Inertia-type'' and ``Ratio-type'' contain the value of the Inertia-type bound and the value of the Ratio-type bound, respectively, for the given graph instance. Similarly the column ``$\vartheta(G^k)$'' contains the value of the Lovász theta number and the column ``Singleton-type'' contains the value of the Singleton-type upper bound. Since it is computationally expensive to compute the Lovász theta number, the value is not computed for every graph instance. In that case, this is indicated by a dash in the corresponding entry of the table. The column ``$\alpha_k$'' contains the value of the true $k$-independence number of that graph instance. Only the instances where the Inertia-type bound or the Ratio-type bound performed no worse than the Singleton-type bound are provided. A value in the columns ``Inertia-type'' and ``Ratio-type'' is indicated in bold when it is lower than the corresponding Singleton-type upper bound.

\begin{table}[ht]
\centering
\begin{tabular}{|ccc|c|c|c|c|c|}
\hline
$q$ & $n$ & $k$ & Inertia-type & Ratio-type & $\alpha_k$ & $\vartheta(G^k)$ & \makecell{Singleton-type \\ (Theorem \ref{th: singleton pr})} \\
\hline
\hline
2&2&1&1&1&1&1&1 \\
3&2&1&7&3&3&3&3 \\
4&2&1&6&4&4&4&4 \\
5&2&1&12&5&5&5&5 \\
2&3&1&7&4&4&4&4 \\
2&3&2&1&1&1&1&1 \\
3&3&1&13&9&9&9&9 \\
4&3&1&43&16&16&16&16 \\
4&3&2&19&4&4&4&4 \\
5&3&1&52&25&25&25&25 \\
5&3&2&25&5&5&5&5 \\
2&4&1&\textbf{5}&\textbf{6}&5&6&8 \\
2&4&2&1&1&1&-&1 \\
2&4&3&1&1&1&-&1 \\
3&4&1&40&27&27&27&27 \\
3&4&2&11&\textbf{6}&6&6&9 \\
4&4&1&91&64&64&64&64 \\
4&4&2&61&16&16&-&16 \\
5&4&1&421&125&125&-&125 \\
5&4&2&161&25&25&-&25 \\
5&4&3&41&5&5&-&5 \\
2&5&1&16&16&16&-&16 \\
2&5&2&\textbf{2}&\textbf{2}&2&-&8 \\
2&5&3&1&1&1&-&1 \\
2&5&4&1&1&1&-&1 \\
3&5&1&161&81&81&-&81 \\
3&5&2&53&\textbf{16}&11&-&27 \\
3&5&3&22&\textbf{6}&6&-&9 \\
\hline
\end{tabular}
\caption{Results of the Inertia-type bound and the Ratio-type bound for the phase-rotation metric, compared to the Singleton-type bound, the Lovász theta number $\vartheta(G^k)$, and the actual $k$-independence number $\alpha_k$. Improvements of the Inertia-type bound and the Ratio-type bound compared to the Singleton-type bound are in bold.}
\label{tab: phase rotation results}
\end{table}

As expected from the theoretical results, the Ratio-type bound performs well. It performs at least as good as the Singleton-type bound in almost all tested instances, even in an instance with $k = 4$, which was not included in the earlier theoretical analysis. Moreover, the Ratio-type bound improves on the Singleton-type bound in several instances and is also sharp in many instances. The performance of the Inertia-type bound, on the other hand, varies widely. In most instances it performs worse than the Ratio-type bound. However, when $n=5, q=2, k=1,2$ the Inertia-type bound performs equally good and is sharp. Moreover, when $n=4, q=2, k=1$ the Inertia-type bound outperforms both the Ratio-type bound and the Singleton-type bound, and is equal to the $k$-independence number.

To sum up the results for the phase-rotation metric, we have seen that the spectral bounds improve the Singleton-type bound in several instances. For the Ratio-type bound it was proven theoretically that in most instances where the minimum distance is small (and $n$ is large enough) the Ratio-type bound is at least as good as the Singleton-type bound. Some computational results also show improvement for the Ratio-type bound in several instances. For the Inertia-type bound, computational results show that there are a few instances where it outperforms the Ratio-type bound and the Singleton-type bound, while its overall performance varies widely.

\section{Tightness results for other metrics} \label{sec: equality metrics}

In this section we apply the Eigenvalue Method to three more metrics; the block metric, the cyclic $b$-burst metric, and the Varshamov metric. While the bounds obtained from this method do not improve state-of-the-art bounds for any of these metrics, there are specific instances where the Eigenvalue Method gives tight bounds; see Table \ref{tab:metrics overview}. In these instances the Inertia-type bound or the Ratio-type bound equals the $k$-independence number, and thus equals the maximum cardinality of codes of a specific minimum distance. So the Eigenvalue Method gives an alternative approach for calculating the maximum cardinality of codes in the block metric, the cyclic $b$-burst metric, and the Varshamov metric.

\subsection{Block metric} \label{sec: block}
The block metric was introduced in \cite{Feng2006LinearCodes}.

\begin{definition}
Let $P=\{p_1, \ldots, p_m\}$ be a partition of $[n]$. The \textit{block $P$-weight} of $\textbf{x} \in \Fq^n$ is defined as
\[
w_P(\textbf{x}) := \min\left\{|I|: \text{supp}(\textbf{x}) \subseteq \bigcup_{i \in I} p_i\right\}.
\]
The \textit{block $P$-distance} between $\textbf{x},\textbf{y} \in \Fq^n$ is defined as $d_P(\textbf{x},\textbf{y}) := w_P(\textbf{x}-\textbf{y})$.
\end{definition}

Fix a partition $P = \{p_1, \dots, p_m\}$ of $[n]$. Applying the Eigenvalue Method to the discrete metric space $(\Fq^n, d_P)$ gives the \textit{block $P$-distance graph} $G_P(\Fq^n)$. This graph satisfies condition \ref{item: C1} and properties \ref{item: C2} and \ref{item: C3}. Moreover, property \ref{item: C4} holds since $G_P(\Fq^n)$ is not distance-regular in general. So both the Inertia-type bound and the Ratio-type bound, and their respective linear programs, can be applied to this graph.


The bounds obtained via the Eigenvalue Method can be compared to a Singleton-type bound: for a code $\mathcal{C} \subseteq \Fq^n$ of minimum block $P$-distance $d$ it holds that
\begin{equation} \label{eq: singleton type block metric}
|\mathcal{C}| \leq q^{\sum_{j=d}^m \, p_j},
\end{equation}
where w.l.o.g. $|p_1| \geq \cdots \geq |p_m|$. This bound can easily be derived from the Singleton-type bound for the combinatorial metric, which can be found in \cite{Bossert1996Singleton-TypeCodes}, since the block metric is an example of a combinatorial metric. 
Now we test the performance of the following instances:
\[
P=\big\{ \{1,2\},\{3\} \big\}, \big\{ \{1,2\},\{3,4\} \big\}, \big\{ \{1,2,3\},\{4,5\} \big\}, \quad q=2,3,4, \quad k=1,\ldots, m-1,
\]
\[
\text{and } P=\big\{ \{1,2\},\{3,4\},\{5,6\} \big\}, \big\{ \{1,2,3\},\{4,5\},\{6\} \big\}, \quad q=2,3, \quad k=1,\ldots, m-1.
\]
The results can be seen in Table \ref{tab: block results}. The columns ``Inertia-type'', ``Ratio-type'', and ``Singleton-type'' give the value of the Inertia-type bound, the Ratio-type bound and the Singleton-type bound, respectively, for the given instance. The columns ``$\alpha_k$'' and ``$\vartheta(G^k)$'' contain the value of the $k$-independence number and the value of the Lovász theta number, respectively. For some instances the Lovász theta number could not be calculated in reasonable time, which is indicated by a dash in the table. Since the Singleton-type bound always performs at least as good as the bounds obtained using the Eigenvalue Method, only the instances where either the Inertia-type bound or the Ratio-type bound attains the $k$-independence number are displayed in the table. 

\begin{table}[ht] 
\centering
\begin{tabular}{|ccc|c|c|c|c|c|}
\hline
$P$ & $q$ & $k$ & Inertia-type & Ratio-type & $\alpha_k$ & $\vartheta(G^k)$ & \makecell{Singleton-type \\ (Equation \eqref{eq: singleton type block metric}) } \\
\hline
$\big\{ \{1,2\},\{3\} \big\}$ & 2 & 1 & 5 & 2 & 2 & 2 & 2 \\
$\big\{ \{1,2\},\{3,4\} \big\}$ & 2 & 1 & 7 & 4 & 4 & 4 & 4 \\
$\big\{ \{1,2\},\{3,4\} \big\}$ & 3 & 1 & 17 & 9 & 9 & 9 & 9 \\
$\big\{ \{1,2\},\{3,4\} \big\}$ & 4 & 1 & 31 & 16 & 16 & 16 & 16 \\
$\big\{ \{1,2\},\{3,4\},\{5,6\} \big\}$ & 2 & 1 & 27 & 16 & 16 & 16 & 16 \\
$\big\{ \{1,2\},\{3,4\},\{5,6\} \big\}$ & 2 & 2 & 10 & 4 & 4 & 4 & 4 \\
$\big\{ \{1,2\},\{3,4\},\{5,6\} \big\}$ & 3 & 1 & 217 & 81 & 81 & - & 81 \\
$\big\{ \{1,2\},\{3,4\},\{5,6\} \big\}$ & 3 & 2 & 25 & 9 & 9 & - & 9 \\

\hline
\end{tabular}
\caption{Results of the Inertia-type bound and the Ratio-type for the block metric, compared to the Singleton-type bound, the Lovász theta number $\vartheta(G^k)$, and the actual $k$-independence number $\alpha_k$.}
\label{tab: block results}
\end{table}

We see that the Ratio-type bound equals the $k$-independence number is some specific instances, while the Inertia-type bound is strictly larger in all tested instances. Notably, most instances where the Ratio-type bound is tight are of the form $|p_1|= \cdots = |p_m|$.

\subsection{Cyclic $b$-burst metric} \label{sec: cyclic b burst}
The cyclic $b$-burst metric was introduced in \cite{Bridwell1970BurstCorrection}.

\begin{definition} \label{def: cyclic b burst}
Let $2 \leq b \leq n-1$. Define $A_i := \{i+j: j=1, \ldots, b\}$ for $i=0, \ldots, n-1$, where the addition is done modulo $n$ and $0 \mod{n}$ is denoted as $n$. Let $\mathcal{A} := \{A_0, \ldots, A_{n-1}\}$. The cyclic $b$-burst weight of $\textbf{x} \in \Fq^n$ is defined as
\[
w_b(\textbf{x}) := \min\left\{|I|: \text{supp}(\textbf{x}) \subseteq \bigcup_{i \in I} A_i\right\}.
\]
The cyclic $b$-burst distance between $\textbf{x}, \textbf{y} \in \Fq^n$ is defined as $d_b(\textbf{x}, \textbf{y}) := w_b(\textbf{x} - \textbf{y})$.
\end{definition}

\begin{example}
To illustrate the set $\mathcal{A}$ from Definition \ref{def: cyclic b burst}, consider $n=5$ and $b=3$. Then
\[
A_0=\{1,2,3\}, \quad A_1=\{2,3,4\}, \quad A_2=\{3,4,5\}, \quad A_3=\{4,5,1\}, \quad A_4=\{5,1,2\}.
\]
\end{example}


Fix $2 \leq b \leq n-1$. Applying the Eigenvalue Method to the discrete metric space $(\Fq^n,d_b)$ gives the \textit{cyclic $b$-burst distance graph} $G_b(\Fq^n)$. This graph satisfies condition \ref{item: C1} and properties \ref{item: C2} and \ref{item: C3}. Moreover, $G_b(\Fq^n)$ is not distance-regular in general, so property \ref{item: C4} holds. This means both the Inertia-type bound and the Ratio-type bound, and their respective linear programs, can be applied to this graph.

The bounds obtained via the Eigenvalue Method can be compared to a Singleton-type bound: for a code $\mathcal{C} \subseteq \Fq^n$ of minimum cyclic $b$-burst distance $d$ it holds that
\begin{equation} \label{eq: singleton type cyclic b burst}
|\mathcal{C}| \leq q^{n-b(d-1)}.
\end{equation}
This bound can be derived from the Singleton-type bound for the combinatorial metric in \cite{Bossert1996Singleton-TypeCodes}, since the cyclic $b$-burst metric is an example of a combinatorial metric. This bound is also known as the extended Reiger bound (see \cite{Villalba2016OnCodes}).
We test the performance of the spectral bounds in the following instances:
\[
n=3,4,5, \quad q=2,3, \quad b=2,\ldots, n-1, \quad k=1, \ldots \lceil \tfrac{n}{b} \rceil -1,
\]
\[
\text{and } n=3,4, \quad q=5, \quad b=2, \quad k=1.
\]
The results can be seen in Table \ref{tab: cyclic b burst results}. The columns ``Inertia-type'', ``Ratio-type'', and ``Singleton-type'' give the value of the Inertia-type bound, the Ratio-type bound and the Singleton-type bound, respectively, for the given instance. The columns ``$\alpha_k$'' and ``$\vartheta(G^k)$'' contain the value of the $k$-independence number and the value of the Lovász theta number, respectively. Since the Singleton-type bound always performs at least as good as the bounds obtained using the Eigenvalue Method, only the instances where either the Inertia-type bound or the Ratio-type bound attains the $k$-independence number are displayed in the table.

\begin{table}[ht] 
\centering
\begin{tabular}{|cccc|c|c|c|c|c|}
\hline
$n$ & $q$ & $b$ & $k$ & Inertia-type & Ratio-type & $\alpha_k$ & $\vartheta(G^k)$ & \makecell{Singleton-type \\ (Equation \eqref{eq: singleton type cyclic b burst})} \\
\hline
3&2&2&1&5&2&2&2&2 \\
3&3&2&1&15&3&3&3&3 \\
4&2&3&1&9&2&2&2&2 \\
5&2&2&2&6&2&2&2&2 \\
5&2&4&1&17&2&2&2&2 \\

\hline
\end{tabular}
\caption{Results of the Inertia-type bound and the Ratio-type for the cyclic $b$-burst metric, compared to the Singleton-type bound, the Lovász theta number $\vartheta(G^k)$, and the actual $k$-independence number $\alpha_k$.}
\label{tab: cyclic b burst results}
\end{table}

Table \ref{tab: cyclic b burst results} shows that the Inertia-type bound is not tight in any tested instances, while the Ratio-type bound is tight in some instances. However, it is not immediately clear why those instances give a tightness for the Ratio-type bound.

\subsection{Varshamov metric} \label{sec: varshamov}
The Varshamov metric, also known as the asymmetric metric, was introduced in \cite{Varshamov1965OnCodes}.

\begin{definition}
The \textit{Varshamov distance} between $\textbf{x},\textbf{y} \in \mathbb{F}_2^n$ is defined as 
\[
d_\text{Var}(\textbf{x},\textbf{y}) := \tfrac{1}{2} \left( w_\text{H}(\textbf{x}-\textbf{y}) + \left|w_\text{H}(\textbf{x})- w_\text{H}(\textbf{y})\right| \right),
\]
where $w_\text{H}$ denotes the Hamming weight. 
\end{definition}

Another definition of the Varshamov distance between $\textbf{x}=(x_1,\ldots,x_n),\textbf{y}=(y_1,\ldots,y_n) \in \mathbb{F}_2^n$ is given in \cite{Rao1975AsymmetricMemories}: 
\[
d_\text{Var}(\textbf{x},\textbf{y}) := \max \{N_{01}(\textbf{x},\textbf{y}), N_{10}(\textbf{x},\textbf{y})\},
\]
where 
\[
N_{01}(\textbf{x},\textbf{y}) := \left|\{i: x_i = 0, y_i = 1\} \right|, \qquad N_{10}(\textbf{x},\textbf{y}) := \left|\{i: x_i = 1, y_i = 0\}\right|.
\]
In  \cite[Lemma 2.1]{Klve1981ErrorChannel} the equivalence of both definitions is proven.

Applying the Eigenvalue Method to the discrete metric space $(\mathbb{F}_2^n, d_\text{Var})$ gives the \textit{Varshamov distance graph} $G_\text{Var}(\mathbb{F}_2^n)$. This graph satisfies condition \ref{item: C1} and property \ref{item: C4}. However, desired properties \ref{item: C2} and \ref{item: C3} do not hold. This means only the Inertia-type bound can be applied to this graph.

The bound obtained via the Eigenvalue Method can be compared to a Plotkin-type bound \cite{Borden1983ACodes} and to a bound due to Varshamov \cite{Varshamov1964EstimateRussian}. The latter bound states that for a code $\mathcal{C} \subseteq \mathbb{F}_2^n$ of minimum Varshamov distance $d$ it holds that
\begin{equation} \label{eq: varshamov bound}
|\mathcal{C}| \leq \frac{2^{n+1}}{\sum\limits_{i=0}^{d-1} {\lfloor n/2 \rfloor \choose i} + {\lceil n/2\rceil \choose i}}.
\end{equation}
Note that an integer programming bound also exists for codes in the Varshamov metric (see e.g. \cite{Delsarte1981BoundsCodes}).
We test some instances of the graph, specifically
\[
n=2,\ldots, 8, \quad k=1,\ldots n-1 \text{ except } (n,k)=(8,7).
\]
The results can be seen in Table \ref{tab: Varshamov results}. The columns ``Inertia-type'', ``Plotkin-type'', and ``Varshamov'' give the value of the Inertia-type bound, the Plotkin-type bound and the bound due to Varshamov, respectively, for the given instance. The columns ``$\alpha_k$'' and ``$\vartheta(G^k)$'' contain the value of the $k$-independence number and the value of the Lovász theta number, respectively. Since either the Plotkin-type bound or the bound due to Varshamov always performs at least as good as the Inertia-type bound, only the instances where the Inertia-type bound attains the $k$-independence number are displayed in the table. 

\begin{table}[ht] 
\centering
\begin{tabular}{|cc|c|c|c|c|c|}
\hline
$n$ & $k$ & Inertia-type & $\alpha_k$ & $\vartheta(G^k)$ & \makecell{Plotkin-type \cite{Borden1983ACodes}} & \makecell{Varshamov \\ (Equation \eqref{eq: varshamov bound})} \\
\hline
2&1&2&2&2.0&2&2 \\
3&2&2&2&2.0&2&2 \\
4&3&2&2&2.0&2&4 \\
5&3&2&2&2.0&2&5 \\
5&4&2&2&2.0&2&5 \\
6&4&2&2&2.0&2&8 \\
6&5&2&2&2.0&2&8 \\
7&5&2&2&2.0&2&10 \\
7&6&2&2&2.0&2&10 \\

\hline
\end{tabular}
\caption{Results of the Inertia-type bound for the Varshamov metric, compared to the Plotkin-type bound, the bound from Varshamov, the Lovász theta number $\vartheta(G^k)$, and the actual $k$-independence number $\alpha_k$.}
\label{tab: Varshamov results}
\end{table}

We can see in Table \ref{tab: Varshamov results} that the instances where the Inertia-type bound is tight are those where $k$ is close to $n$. In all these instances the $k$-independence number equals 2.

\subsection*{Acknowledgements}
Aida Abiad is supported by the Dutch Research Council through the grants VI.Vidi.213.085 and OCENW.KLEIN.475. Alberto Ravagnani is supported by the Dutch Research Council through the grants VI.Vidi.203.045, OCENW.KLEIN.539, and by the Royal Academy of Arts and Sciences of the Netherlands.

\bibliographystyle{abbrv}
\bibliography{references2.bib}




\end{document}